\date{\today}
\newtheorem{theorem}{Theorem}[section]
\newtheorem{lemma}[theorem]{Lemma}
\newtheorem{proposition}[theorem]{Proposition}
\newtheorem{corollary}[theorem]{Corollary}
\theoremstyle{definition}
\newtheorem{example}[theorem]{Example}
\theoremstyle{remark}
\newtheorem{remark}[theorem]{Remark}
\numberwithin{equation}{section}
\begin{document}

\title[On the monoid of cofinite partial isometries of $\mathbb{N}$ with a bounded finite noise]{On the monoid of cofinite partial isometries of $\mathbb{N}$ with a bounded finite noise}

\author[O.~Gutik and P.~Khylynskyi]{Oleg~Gutik and Pavlo~Khylynskyi}
\address{Faculty of Mathematics, Ivan Franko National
University of Lviv, Universytetska 1, Lviv, 79000, Ukraine}
\email{oleg.gutik@lnu.edu.ua, ovgutik@yahoo.com, pavlo.khylynskyi@lnu.edu.ua}

\keywords{Partial isometry, inverse semigroup, partial bijection, bicyclic monoid, closure, locally compact, topological inverse semigroup}

\subjclass[2020]{20M18, 20M20, 20M30, 22A15, 54A10, 54D45}

\begin{abstract}
In the paper we study algebraic properties of the monoid $\mathbf{I}\mathbb{N}_{\infty}^{\boldsymbol{g}[j]}$ of cofinite partial isometries of the set of positive integers $\mathbb{N}$ with the bounded finite noise $j$. For the monoids $\mathbf{I}\mathbb{N}_{\infty}^{\boldsymbol{g}[j]}$ we prove counterparts of some classical results of Eberhart and Selden describing the closure
of the bicyclic semigroup in a locally compact topological inverse semigroup.  In particular we show that for any positive integer $j$ every Hausdorff shift-continuous  topology $\tau$ on $\mathbf{I}\mathbb{N}_{\infty}^{\boldsymbol{g}[j]}$ is discrete and  if $\mathbf{I}\mathbb{N}_{\infty}^{\boldsymbol{g}[j]}$ is a proper dense subsemigroup of a Hausdorff semitopological semigroup $S$, then $S\setminus \mathbf{I}\mathbb{N}_{\infty}^{\boldsymbol{g}[j]}$  is a closed ideal of $S$, and moreover if $S$ is a topological inverse semigroup then $S\setminus \mathbf{I}\mathbb{N}_{\infty}^{\boldsymbol{g}[j]}$ is a topological group. Also we describe the algebraic and topological structure of the closure of the monoid $\mathbf{I}\mathbb{N}_{\infty}^{\boldsymbol{g}[j]}$ in a locally compact topological inverse semigroup.
\end{abstract}

\maketitle


\section{Introduction and preliminaries}

In this paper we shall follow the terminology of \cite{Carruth-Hildebrant-Koch-1983-1986, Clifford-Preston-1961-1967, Lawson-1998, Ruppert-1984}.
We shall denote the first infinite cardinal by $\omega$ and the cardinality of a set $A$ by $|A|$. By $\operatorname{cl}_X(A)$ we denote the closure of subset $A$ in a topological space $X$.


A semigroup $S$ is called {\it inverse} if for any
element $x\in S$ there exists a unique $x^{-1}\in S$ such that
$xx^{-1}x=x$ and $x^{-1}xx^{-1}=x^{-1}$. The element $x^{-1}$ is
called the {\it inverse of} $x\in S$. If $S$ is an inverse
semigroup, then the function $\operatorname{inv}\colon S\to S$
which assigns to every element $x$ of $S$ its inverse element
$x^{-1}$ is called the {\it inversion}.

If $S$ is a semigroup, then we shall denote the subset of all
idempotents in $S$ by $E(S)$. If $S$ is an inverse semigroup, then
$E(S)$ is closed under multiplication and we shall refer to $E(S)$ as a
\emph{band} (or the \emph{band of} $S$).

If $S$ is an inverse semigroup then the semigroup operation on $S$ determines the following partial order $\preccurlyeq$
on $S$: $s\preccurlyeq t$ if and only if there exists $e\in E(S)$ such that $s=te$. This order is
called the {\em natural partial order} on $S$ and it induces the {\em natural partial order} on the semilattice $E(S)$ \cite{Wagner-1952}.

An inverse subsemigroup $T$ of an inverse semigroup $S$ is called \emph{full} if $E(T)=E(S)$.

A congruence $\mathfrak{C}$ on a semigroup $S$ is called a \emph{group congruence} if the quotient semigroup $S/\mathfrak{C}$ is a group. Any inverse semigroup $S$ admits the \emph{minimum group congruence} $\mathfrak{C}_{\mathbf{mg}}$:
\begin{equation*}
  a\mathfrak{C}_{\mathbf{mg}}b \quad \hbox{if and only if} \quad \hbox{there exists} \quad e\in E(S) \quad \hbox{such that} \quad ea=eb.
\end{equation*}
Also, we say that a semigroup homomorphism $\mathfrak{h}\colon S\to T$ is a \emph{group homomorphism} if the image $(S)\mathfrak{h}$ is a group, and $\mathfrak{h}\colon S\to T$ is \emph{trivial} if it is either an isomorphism or annihilating.

The bicyclic monoid ${\mathscr{C}}(p,q)$ is the semigroup with the identity $1$ generated by two elements $p$ and $q$ subjected only to the condition $pq=1$. The semigroup operation on ${\mathscr{C}}(p,q)$ is determined as
follows:
\begin{equation*}
    q^kp^l\cdot q^mp^n=q^{k+m-\min\{l,m\}}p^{l+n-\min\{l,m\}}.
\end{equation*}
It is well known that the bicyclic monoid ${\mathscr{C}}(p,q)$ is a bisimple (and hence simple) combinatorial $E$-unitary inverse semigroup and every non-trivial congruence on ${\mathscr{C}}(p,q)$ is a group congruence \cite{Clifford-Preston-1961-1967}.

If $\alpha\colon X\rightharpoonup Y$ is a partial map, then we shall denote the domain and the range of $\alpha$ by $\operatorname{dom}\alpha$ and $\operatorname{ran}\alpha$, respectively. A partial map $\alpha\colon X\rightharpoonup Y$ is called \emph{cofinite} if both sets $X\setminus\operatorname{dom}\alpha$ and $Y\setminus\operatorname{ran}\alpha$ are finite.

Let $\mathscr{I}_\lambda$ denote the set of all partial one-to-one transformations of a non-zero  cardinal $\lambda$ together
with the following semigroup operation:
\begin{equation*}
x(\alpha\beta)=(x\alpha)\beta \quad \hbox{if} \quad x\in\operatorname{dom}(\alpha\beta)=\{
y\in\operatorname{dom}\alpha\colon y\alpha\in\operatorname{dom}\beta\}, \quad  \hbox{for} \;
\alpha,\beta\in\mathscr{I}_\lambda.
\end{equation*}
 The semigroup
$\mathscr{I}_\lambda$ is called the \emph{symmetric inverse} (\emph{monoid})
\emph{semigroup} over the cardinal $\lambda$~(see \cite{Clifford-Preston-1961-1967}). The symmetric inverse
semigroup was introduced by Wagner~\cite{Wagner-1952} and it plays
a major role in the theory of semigroups. By $\mathscr{I}^{\mathrm{cf}}_\lambda$ is denoted a
subsemigroup of injective partial selfmaps of $\lambda$ with
cofinite domains and ranges in $\mathscr{I}_\lambda$. Obviously, $\mathscr{I}^{\mathrm{cf}}_\lambda$ is an inverse
submonoid of the semigroup $\mathscr{I}_\lambda$. The
semigroup $\mathscr{I}^{\mathrm{cf}}_\lambda$  is called the \emph{monoid of
injective partial cofinite selfmaps} of $\lambda$ \cite{Gutik-Repovs-2015}.


A partial transformation $\alpha\colon (X,d)\rightharpoonup (X,d)$ of a metric space $(X,d)$ is called \emph{isometric} or a \emph{partial isometry}, if $d(x\alpha,y\alpha)=d(x,y)$ for all $x,y\in \operatorname{dom}\alpha$. It is obvious that the composition of two partial isometries of a metric space $(X,d)$ is a partial isometry, and the converse partial map to a partial isometry is a partial isometry, too. Hence the set of partial isometries of a metric space $(X,d)$ with the operation of composition of partial isometries is an inverse submonoid of the symmetric inverse monoid over the cardinal $|X|$. Also, it is obvious that the set of partial cofinite isometries of a metric space $(X,d)$ with the operation the composition of partial isometries is an inverse submonoid of the monoid of injective partial cofinite selfmaps of the cardinal $|X|$.


We endow the sets $\mathbb{N}$ and $\mathbb{Z}$ with the standard linear order.

The semigroup $\mathbf{ID}_{\infty}$ of all partial cofinite isometries of the set of integers $\mathbb{Z}$ with the usual metric $d(n,m)=|n-m|$, $n,m\in \mathbb{Z}$, was studied in the  papers \cite{Bezushchak-2004, Bezushchak-2008, Gutik-Savchuk-2017}.

Let $\mathbf{I}\mathbb{N}_{\infty}$ be the set of all partial cofinite isometries of the set of positive integers $\mathbb{N}$ with the usual metric $d(n,m)=|n-m|$, $n,m\in \mathbb{N}$. Then $\mathbf{I}\mathbb{N}_{\infty}$ with the operation of composition of partial isometries is an inverse submonoid of $\mathscr{I}_\omega$. The semigroup $\mathbf{I}\mathbb{N}_{\infty}$ of all partial cofinite isometries of positive integers is studied in \cite{Gutik-Savchuk-2018}. There we described the Green relations on the semigroup $\mathbf{I}\mathbb{N}_{\infty}$, its band and proved that $\mathbf{I}\mathbb{N}_{\infty}$ is a simple $E$-unitary $F$-inverse semigroup. Also in \cite{Gutik-Savchuk-2018}, the least group congruence $\mathfrak{C}_{\mathbf{mg}}$ on $\mathbf{I}\mathbb{N}_{\infty}$ is described and there it is proved that the quotient-semigroup  $\mathbf{I}\mathbb{N}_{\infty}/\mathfrak{C}_{\mathbf{mg}}$ is isomorphic to the additive group of integers $\mathbb{Z}(+)$. An example of a non-group congruence on the semigroup $\mathbf{I}\mathbb{N}_{\infty}$ is presented. Also it is  proved that a congruence on the semigroup $\mathbf{I}\mathbb{N}_{\infty}$ is a group congruence if and only if its restriction onto an isomorphic  copy of the bicyclic semigroup in $\mathbf{I}\mathbb{N}_{\infty}$ is a group congruence.
In \cite{Gutik-Savchuk-2020} it was shown that the monoid $\mathbf{I}\mathbb{N}_{\infty}$ does not embed isomorphically into the semigroup $\mathbf{ID}_{\infty}$.  Moreover every non-annihilating homomorphism $\mathfrak{h}\colon \mathbf{I}\mathbb{N}_{\infty}\to\mathbf{ID}_{\infty}$ has the following property:   the image $(\mathbf{I}\mathbb{N}_{\infty})\mathfrak{h}$ is isomorphic  either to $\mathbb{Z}_2$ or to $\mathbb{Z}(+)$. Also it is proved that  $\mathbf{I}\mathbb{N}_{\infty}$ does not have a finite set of generators, and moreover it does not contain a minimal generating set.

Later by $\mathbb{I}$ we denote the unit elements of $\mathbf{I}\mathbb{N}_{\infty}$.

\begin{remark}\label{remark-1.1}
We observe that the bicyclic semigroup is isomorphic to the
semigroup $\mathscr{C}_{\mathbb{N}}$ which is
generated by partial transformations $\alpha$ and $\beta$ of the set
of positive integers $\mathbb{N}$, defined as follows:
\begin{equation*}
\operatorname{dom}\alpha=\mathbb{N}, \qquad \operatorname{ran}\alpha=\mathbb{N}\setminus\{1\},  \qquad (n)\alpha=n+1
\end{equation*}
and
\begin{equation*}
\operatorname{dom}\beta=\mathbb{N}\setminus\{1\}, \qquad \operatorname{ran}\beta=\mathbb{N},  \qquad (n)\beta=n-1
\end{equation*}
(see Exercise~IV.1.11$(ii)$ in \cite{Petrich-1984}). It is obvious that $\mathbb{I}=\alpha\beta$ and $\mathscr{C}_{\mathbb{N}}$ is a submonoid  of $\mathbf{I}\mathbb{N}_{\infty}$.
\end{remark}

The semigroup of monotone (order preserving) injective partial transformations $\varphi$ of $\mathbb{N}$ such that the sets $\mathbb{N}\setminus\operatorname{dom}\varphi$ and $\mathbb{N}\setminus\operatorname{ran}\varphi$ are finite  was introduced in \cite{Gutik-Repovs-2011} and there it was denoted by $\mathscr{I}_{\infty}^{\!\nearrow}(\mathbb{N})$. Obviously, $\mathscr{I}_{\infty}^{\!\nearrow}(\mathbb{N})$ is an inverse subsemigroup of the semigroup $\mathscr{I}_\omega$. The semigroup $\mathscr{I}_{\infty}^{\!\nearrow}(\mathbb{N})$ is called \emph{the semigroup of cofinite monotone partial bijections} of $\mathbb{N}$. In \cite{Gutik-Repovs-2011} Gutik and Repov\v{s} studied properties of the semigroup $\mathscr{I}_{\infty}^{\!\nearrow}(\mathbb{N})$. In particular, they showed that $\mathscr{I}_{\infty}^{\!\nearrow}(\mathbb{N})$ is an inverse bisimple semigroup and all of its non-trivial group homomorphisms are either isomorphisms or group homomorphisms. It is obvious that $\mathbf{I}\mathbb{N}_{\infty}$ is an inverse submonoid of $\mathscr{I}_{\infty}^{\!\nearrow}(\mathbb{N})$.


A partial map $\alpha\colon \mathbb{N}\rightharpoonup \mathbb{N}$ is
called \emph{almost monotone} if there exists a finite subset $A$ of
$\mathbb{N}$ such that the restriction
$\alpha\mid_{\mathbb{N}\setminus A}\colon \mathbb{N}\setminus
A\rightharpoonup \mathbb{N}$ is a monotone partial map.
By $\mathscr{I}_{\infty}^{\,\Rsh\!\!\!\nearrow}(\mathbb{N})$ we
shall denote the semigroup of almost monotone
injective partial transformations of $\mathbb{N}$ such that the sets
$\mathbb{N}\setminus\operatorname{dom}\varphi$ and
$\mathbb{N}\setminus\operatorname{ran}\varphi$ are finite for all
$\varphi\in\mathscr{I}_{\infty}^{\,\Rsh\!\!\!\nearrow}(\mathbb{N})$.
Obviously, $\mathscr{I}_{\infty}^{\,\Rsh\!\!\!\nearrow}(\mathbb{N})$
is an inverse subsemigroup of the semigroup $\mathscr{I}_\omega$ and
the semigroup $\mathscr{I}_{\infty}^{\!\nearrow}(\mathbb{N})$ is an
inverse subsemigroup of
$\mathscr{I}_{\infty}^{\,\Rsh\!\!\!\nearrow}(\mathbb{N})$ too. The
semigroup $\mathscr{I}_{\infty}^{\,\Rsh\!\!\!\nearrow}(\mathbb{N})$
is called \emph{the semigroup of cofinite almost monotone injective partial
transformations} of $\mathbb{N}$.
In the paper \cite{Chuchman-Gutik-2010} the semigroup
$\mathscr{I}_{\infty}^{\,\Rsh\!\!\!\nearrow}(\mathbb{N})$ is studied. In particular, it was shown that the semigroup
$\mathscr{I}_{\infty}^{\,\Rsh\!\!\!\nearrow}(\mathbb{N})$ is inverse,
bisimple and all of its non-trivial group homomorphisms are either
isomorphisms or group homomorphisms.  In the paper \cite{Gutik-Savchuk-2019} we showed that every automorphism of a full inverse subsemigroup of $\mathscr{I}_{\infty}^{\!\nearrow}(\mathbb{N})$ which contains the semigroup $\mathscr{C}_{\mathbb{N}}$ is the identity map. Also there we  constructed a submonoid $\mathbf{I}\mathbb{N}_{\infty}^{[\underline{1}]}$ of $\mathscr{I}_{\infty}^{\,\Rsh\!\!\!\nearrow}(\mathbb{N})$ with the following property: if $S$ be an inverse subsemigroup of $\mathscr{I}_{\infty}^{\,\Rsh\!\!\!\nearrow}(\mathbb{N})$ such that $S$ contains $\mathbf{I}\mathbb{N}_{\infty}^{[\underline{1}]}$ as a submonoid, then every non-identity congruence $\mathfrak{C}$ on $S$ is a group congruence. We show that if $S$ is an inverse submonoid of $\mathscr{I}_{\infty}^{\,\Rsh\!\!\!\nearrow}(\mathbb{N})$ such that $S$ contains $\mathscr{C}_{\mathbb{N}}$ as a submonoid then $S$ is simple and the quotient semigroup $S/\mathfrak{C}_{\mathbf{mg}}$, where $\mathfrak{C}_{\mathbf{mg}}$ is minimum group congruence on $S$, is isomorphic to the additive group of integers. Also,  topologizations of inverse submonoids of $\mathscr{I}_{\infty}^{\,\Rsh\!\!\!\nearrow}(\mathbb{N})$  and embeddings of  such semigroups into compact-like topological semigroups established in \cite{Chuchman-Gutik-2010, Gutik-Savchuk-2019}. Similar results for semigroups of cofinite almost monotone partial
bijections and cofinite almost monotone partial bijections of $\mathbb{Z}$ were obtained in \cite{Gutik-Repovs-2012}.

Next we  need some notions defined in \cite{Gutik-Savchuk-2018} and \cite{Gutik-Savchuk-2019}.
For an arbitrary positive integer $n_0$ we denote $[n_0)=\left\{n\in\mathbb{N}\colon n\geqslant n_0\right\}$. Since the set of all positive integers is well ordered, the definition of the semigroup $\mathscr{I}_{\infty}^{\,\Rsh\!\!\!\nearrow}(\mathbb{N})$ implies that for every $\gamma\in\mathscr{I}_{\infty}^{\,\Rsh\!\!\!\nearrow}(\mathbb{N})$ there exists the smallest positive integer $n_{\gamma}^{\mathbf{d}}\in\operatorname{dom}\gamma$ such that the restriction $\gamma|_{\left[n_{\gamma}^{\mathbf{d}}\right)}$ of the partial map $\gamma\colon \mathbb{N}\rightharpoonup \mathbb{N}$ onto the set $\left[n_{\gamma}^{\mathbf{d}}\right)$ is an element of the semigroup $\mathscr{C}_{\mathbb{N}}$, i.e., $\gamma|_{\left[n_{\gamma}^{\mathbf{d}}\right)}$ is a some shift of $\left[n_{\gamma}^{\mathbf{d}}\right)$. For every $\gamma\in\mathscr{I}_{\infty}^{\,\Rsh\!\!\!\nearrow}(\mathbb{N})$ we put $\overrightarrow{\gamma}=\gamma|_{\left[n_{\gamma}^{\mathbf{d}}\right)}$, i.e.
\begin{equation*}
\operatorname{dom}\overrightarrow{\gamma}=\big[n_{\gamma}^{\mathbf{d}}\big), \quad (x)\overrightarrow{\gamma}=(x)\gamma \quad \hbox{for all} \; x\in \operatorname{dom}\overrightarrow{\gamma} \quad \hbox{and} \quad \operatorname{ran}\overrightarrow{\gamma}=\left(\operatorname{dom}\overrightarrow{\gamma}\right)\gamma.
\end{equation*}
Also, we put
\begin{equation*}
\underline{n}_{\gamma}^{\mathbf{d}}=\min\operatorname{dom}\gamma \qquad \hbox{for} \quad \gamma\in\mathscr{I}_{\infty}^{\,\Rsh\!\!\!\nearrow}(\mathbb{N}).
\end{equation*}
It is obvious that $\underline{n}_{\gamma}^{\mathbf{d}}= n_{\gamma}^{\mathbf{d}}$ when $\gamma\in\mathscr{C}_{\mathbb{N}}$, and $\underline{n}_{\gamma}^{\mathbf{d}}< n_{\gamma}^{\mathbf{d}}$ when $\gamma\in\mathscr{I}_{\infty}^{\,\Rsh\!\!\!\nearrow}(\mathbb{N})\setminus\mathscr{C}_{\mathbb{N}}$. Also for any $\gamma\in\mathbf{I}\mathbb{N}_{\infty}$ we denote
\begin{equation*}
  \underline{n}_{\gamma}^{\mathbf{r}}=(\underline{n}_{\gamma}^{\mathbf{d}})\gamma \qquad \hbox{and} \qquad n_{\gamma}^{\mathbf{r}}=(n_{\gamma}^{\mathbf{d}})\gamma.
\end{equation*}

The results of Section~3 of \cite{Gutik-Savchuk-2020} imply that $n_{\gamma}^{\mathbf{r}}-\underline{n}_{\gamma}^{\mathbf{r}}=n_{\gamma}^{\mathbf{d}}-\underline{n}_{\gamma}^{\mathbf{d}}$ for any $\gamma\in\mathbf{I}\mathbb{N}_{\infty}$, and moreover for any non-negative integer $j$
\begin{equation*}
  \mathbf{I}\mathbb{N}_{\infty}^{\boldsymbol{g}[j]}=\big\{\gamma\in \mathbf{I}\mathbb{N}_{\infty}\colon n_{\gamma}^{\mathbf{d}}-\underline{n}_{\gamma}^{\mathbf{d}}\leqslant j\big\}
\end{equation*}
is a simple  inverse subsemigroup of $\mathbf{I}\mathbb{N}_{\infty}$ such that $\mathbf{I}\mathbb{N}_{\infty}$ admits the following infinite semigroup series
\begin{equation*}
\mathscr{C}_{\mathbb{N}}=\mathbf{I}\mathbb{N}_{\infty}^{\boldsymbol{g}[0]}= \mathbf{I}\mathbb{N}_{\infty}^{\boldsymbol{g}[1]}\subsetneqq \mathbf{I}\mathbb{N}_{\infty}^{\boldsymbol{g}[2]}\subsetneqq \mathbf{I}\mathbb{N}_{\infty}^{\boldsymbol{g}[3]}\subsetneqq \cdots \subsetneqq \mathbf{I}\mathbb{N}_{\infty}^{\boldsymbol{g}[k]}\subsetneqq \cdots \subset \mathbf{I}\mathbb{N}_{\infty}.
\end{equation*}

For any positive integer $k$ the semigroup $\mathbf{I}\mathbb{N}_{\infty}^{\boldsymbol{g}[k]}$ is called the \emph{monoid of cofinite isometries of positive integers with the noise} $k$.

A (\emph{semi})\emph{topological} \emph{semigroup} is a topological space with a (separately) continuous semigroup operation. An inverse topological semigroup with continuous inversion is called a \emph{topological inverse semigroup}.

A topology $\tau$ on a semigroup $S$ is called:
\begin{itemize}
  \item a \emph{semigroup} topology if $(S,\tau)$ is a topological semigroup;
   \item an \emph{inverse semigroup} topology if $(S,\tau)$ is a topological inverse semigroup;
  \item a \emph{shift-continuous} topology if $(S,\tau)$ is a semitopological semigroup.
\end{itemize}

The bicyclic monoid admits only the discrete semigroup Hausdorff topology \cite{Eberhart-Selden-1969}. Bertman and  West in \cite{Bertman-West-1976} extended this result for the case of Hausdorff semitopological semigroups. Stable and $\Gamma$-compact topological semigroups do not contain the bicyclic monoid~\cite{Anderson-Hunter-Koch-1965, Hildebrant-Koch-1986, Koch-Wallace-1957}. The problem of embedding the bicyclic monoid into compact-like topological semigroups was studied in \cite{Banakh-Dimitrova-Gutik-2009, Banakh-Dimitrova-Gutik-2010, Bardyla-Ravsky-2019, Gutik-Repovs-2007}.

In this paper we study algebraic properties of the monoid $\mathbf{I}\mathbb{N}_{\infty}^{\boldsymbol{g}[j]}$ and extend results of the papers \cite{Eberhart-Selden-1969} and \cite{Bertman-West-1976} to the semigroups $\mathbf{I}\mathbb{N}_{\infty}^{\boldsymbol{g}[j]}$, $j\geqslant 0$. In particular we show that for any positive integer $j$ every Hausdorff shift-continuous  topology $\tau$ on $\mathbf{I}\mathbb{N}_{\infty}^{\boldsymbol{g}[j]}$ is discrete and and if $\mathbf{I}\mathbb{N}_{\infty}^{\boldsymbol{g}[j]}$ is a proper dense subsemigroup of a Hausdorff semitopological semigroup $S$, then $S\setminus \mathbf{I}\mathbb{N}_{\infty}^{\boldsymbol{g}[j]}$  is a closed ideal of $S$, and moreover if $S$ is a topological inverse semigroup then $S\setminus \mathbf{I}\mathbb{N}_{\infty}^{\boldsymbol{g}[j]}$ is a topological group. Also we describe the algebraic and topological structure of the closure of the monoid $\mathbf{I}\mathbb{N}_{\infty}^{\boldsymbol{g}[j]}$ in a locally compact topological inverse semigroup.

Latter in this paper without loss of generality we may assume that $j$ is an arbitrary positive integer $\geqslant 2$.

\smallskip

\section{Algebraic properties of the monoid $\mathbf{I}\mathbb{N}_{\infty}^{\boldsymbol{g}[j]}$}\label{sec-2}

The following simple proposition describes Green's relations on the monoid $\mathbf{I}\mathbb{N}_{\infty}^{\boldsymbol{g}[j]}$.

\begin{proposition}\label{proposition-1.1}
For elements $\gamma$ and $\delta$ of the semigroup $\mathbf{I}\mathbb{N}_{\infty}^{\boldsymbol{g}[j]}$ the following statements hold:
\begin{itemize}
  \item[$(i)$] $\gamma\mathscr{L}\delta$ in $\mathbf{I}\mathbb{N}_{\infty}^{\boldsymbol{g}[j]}$ if and only if $\operatorname{dom}\gamma=\operatorname{dom}\delta$;
  \item[$(ii)$] $\gamma\mathscr{R}\delta$ in $\mathbf{I}\mathbb{N}_{\infty}^{\boldsymbol{g}[j]}$ if and only if $\operatorname{ran}\gamma=\operatorname{ran}\delta$;
  \item[$(iii)$] $\gamma\mathscr{H}\delta$ in $\mathbf{I}\mathbb{N}_{\infty}^{\boldsymbol{g}[j]}$ if and only if $\gamma=\delta$;
  \item[$(iv)$] $\gamma\mathscr{D}\delta$ in $\mathbf{I}\mathbb{N}_{\infty}^{\boldsymbol{g}[j]}$ if and only if $\operatorname{dom}\gamma$ $(\operatorname{ran}\gamma)$ and  $\operatorname{dom}\delta$ $(\operatorname{ran}\delta)$ are isometric subsets of $\mathbb{N}$, i.e., there exists an isometry from $\operatorname{dom}\gamma$ $(\operatorname{ran}\gamma)$ onto $\operatorname{dom}\delta$ $(\operatorname{ran}\delta)$;
  \item[$(v)$] $\gamma\mathscr{J}\delta$ in $\mathbf{I}\mathbb{N}_{\infty}^{\boldsymbol{g}[j]}$, i.e., $\mathbf{I}\mathbb{N}_{\infty}^{\boldsymbol{g}[j]}$ is a simple semigroup.
\end{itemize}
\end{proposition}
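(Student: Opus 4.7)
My plan is to use that $\mathbf{I}\mathbb{N}_{\infty}^{\boldsymbol{g}[j]}$ is an inverse subsemigroup of $\mathscr{I}_\omega$ with the same inversion, so Green's relations $\mathscr{L}$, $\mathscr{R}$, $\mathscr{H}$ on $\mathbf{I}\mathbb{N}_{\infty}^{\boldsymbol{g}[j]}$ are controlled by the idempotents $\gamma^{-1}\gamma$ and $\gamma\gamma^{-1}$, which are partial identities on subsets of $\mathbb{N}$ determined by the range and domain of $\gamma$. Applying the standard inverse-semigroup criteria $\gamma\mathscr{L}\delta\Leftrightarrow\gamma^{-1}\gamma=\delta^{-1}\delta$ and $\gamma\mathscr{R}\delta\Leftrightarrow\gamma\gamma^{-1}=\delta\delta^{-1}$ yields the correspondences claimed in $(i)$ and $(ii)$.

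Part $(iii)$ follows from $\mathscr{H}=\mathscr{L}\cap\mathscr{R}$ once one establishes the rigidity statement that two elements $\gamma,\delta\in\mathbf{I}\mathbb{N}_{\infty}^{\boldsymbol{g}[j]}$ with $\operatorname{dom}\gamma=\operatorname{dom}\delta$ and $\operatorname{ran}\gamma=\operatorname{ran}\delta$ must coincide. My argument would be: on the common tail $[\max(n_\gamma^{\mathbf{d}},n_\delta^{\mathbf{d}}),\infty)$ each of $\gamma,\delta$ is a shift, and the shift amount is uniquely determined by the fact that the tail images sit inside the common range; then for each of the finitely many smaller points $x$ in the shared domain, both $(x)\gamma$ and $(x)\delta$ are forced to the same value by the isometry equations against the infinitely many tail points, since the orientation-reversed alternative either exits $\mathbb{N}$ or collides with values already taken on the tail.

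For $(iv)$ I use $\mathscr{D}=\mathscr{L}\circ\mathscr{R}$. The forward direction is direct: if $\gamma\mathscr{L}\eta\mathscr{R}\delta$, then $\eta$ itself gives an isometry $\operatorname{dom}\gamma\to\operatorname{ran}\delta$, and $\delta^{-1}$ provides an isometry $\operatorname{ran}\delta\to\operatorname{dom}\delta$, so $\operatorname{dom}\gamma$ and $\operatorname{dom}\delta$ are isometric. The converse is where the noise hypothesis enters and is the main obstacle. Given an isometry $\sigma\colon\operatorname{dom}\gamma\to\operatorname{dom}\delta$, I claim that $\sigma$ itself already lies in $\mathbf{I}\mathbb{N}_{\infty}^{\boldsymbol{g}[j]}$: indeed, any isometry between cofinite subsets of $\mathbb{N}$ is a shift on its tail (orientation reversal is incompatible with the image staying in $\mathbb{N}$), so $\sigma|_{[n_\gamma^{\mathbf{d}},\infty)}$ is a shift, giving $n_\sigma^{\mathbf{d}}\leq n_\gamma^{\mathbf{d}}$ and $n_\sigma^{\mathbf{d}}-\underline{n}_\sigma^{\mathbf{d}}\leq n_\gamma^{\mathbf{d}}-\underline{n}_\gamma^{\mathbf{d}}\leq j$. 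Then $\eta:=\sigma\delta$ belongs to $\mathbf{I}\mathbb{N}_{\infty}^{\boldsymbol{g}[j]}$, has $\operatorname{dom}\eta=\operatorname{dom}\gamma$ and $\operatorname{ran}\eta=\operatorname{ran}\delta$, and witnesses $\gamma\mathscr{L}\eta\mathscr{R}\delta$.

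Finally, $(v)$ is immediate from the simplicity of $\mathbf{I}\mathbb{N}_{\infty}^{\boldsymbol{g}[j]}$ already recorded in the preliminaries (following Section~3 of \cite{Gutik-Savchuk-2020}), since simplicity of a semigroup is equivalent to $\mathscr{J}$ being the universal relation.
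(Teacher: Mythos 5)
Your proof is correct and follows essentially the same route as the paper, which simply cites the standard inverse-semigroup descriptions of $\mathscr{L}$, $\mathscr{R}$, $\mathscr{H}$, $\mathscr{D}$ via the idempotents $\gamma\gamma^{-1}$ and $\gamma^{-1}\gamma$ (Propositions~3.2.5 and~3.2.11 of Lawson together with Proposition~1 of Gutik--Savchuk) and, for $(v)$, the simplicity of $\mathbf{I}\mathbb{N}_{\infty}^{\boldsymbol{g}[j]}$ already recorded in the preliminaries. The one substantive detail you add --- the check in $(iv)$ that the connecting isometry $\sigma$ is a partial shift with the same domain data as $\gamma$, hence satisfies $n_{\sigma}^{\mathbf{d}}-\underline{n}_{\sigma}^{\mathbf{d}}\leqslant j$ and lies in $\mathbf{I}\mathbb{N}_{\infty}^{\boldsymbol{g}[j]}$ --- is precisely the point the paper compresses into ``follows from the definition of the monoid,'' and your verification of it (and of the rigidity step behind $(iii)$) is sound.
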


\begin{proof}
Statements $(i)$, $(ii)$ and $(iii)$ immediately follow from Proposition~3.2.11 of \cite{Lawson-1998} and corresponding statements of Proposition~1 of \cite{Gutik-Savchuk-2018}.

Statement $(iv)$ follows from the definition of the monoid and Proposition~3.2.5 of \cite{Lawson-1998}.

Statement $(v)$ follows from Theorem~5 of \cite{Gutik-Savchuk-2019}.
\end{proof}

Proposition \ref{proposition-1.2} follows from the definition of the natural partial order $\preccurlyeq$ on an inverse semigroup and the statement that every element of the monoid $\mathbf{I}\mathbb{N}_{\infty}^{\boldsymbol{g}[j]}$ is a partial shift of the integers (see \cite[Lemma~1]{Gutik-Savchuk-2018}).

\begin{proposition}\label{proposition-1.2}
Let $\gamma$ and $\delta$  be elements of the monoid $\mathbf{I}\mathbb{N}_{\infty}^{\boldsymbol{g}[j]}$. Then the following conditions are equivalent:
\begin{itemize}
  \item[$(i)$] $\gamma\preccurlyeq\delta$  in $\mathbf{I}\mathbb{N}_{\infty}^{\boldsymbol{g}[j]}$
  \item[$(ii)$] $n_{\gamma}^{\mathbf{r}}-n_{\gamma}^{\mathbf{d}}=n_{\delta}^{\mathbf{r}}-n_{\delta}^{\mathbf{d}}$ and $\operatorname{dom}\gamma\subseteq \operatorname{dom}\delta$;
  \item[$(iii)$] $n_{\gamma}^{\mathbf{r}}-n_{\gamma}^{\mathbf{d}}=n_{\delta}^{\mathbf{r}}-n_{\delta}^{\mathbf{d}}$ and $\operatorname{ran}\gamma\subseteq \operatorname{ran}\delta$.
\end{itemize}
\end{proposition}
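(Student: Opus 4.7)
The plan is to reduce the claim to the structural description of elements of $\mathbf{I}\mathbb{N}_{\infty}$ as partial shifts of $\mathbb{N}$ supplied by \cite[Lemma~1]{Gutik-Savchuk-2018}, according to which every $\gamma$ in the monoid acts on its entire domain by a single shift $n\mapsto n+k_\gamma$, with $k_\gamma=n_\gamma^{\mathbf{r}}-n_\gamma^{\mathbf{d}}$. Since $\mathbf{I}\mathbb{N}_{\infty}^{\boldsymbol{g}[j]}$ is an inverse subsemigroup of the symmetric inverse monoid $\mathscr{I}_\omega$, its natural partial order coincides with the one inherited from $\mathscr{I}_\omega$ (one may always take the idempotent witness $\gamma^{-1}\gamma$). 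I would therefore first re-express $\gamma\preccurlyeq\delta$ in the concrete form: $\gamma$ is a restriction of $\delta$, i.e.\ $\operatorname{dom}\gamma\subseteq\operatorname{dom}\delta$ and $(n)\gamma=(n)\delta$ for every $n\in\operatorname{dom}\gamma$.

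For the implication (i)$\Rightarrow$(ii) the shift description then makes everything direct: picking any $n\in\operatorname{dom}\gamma\subseteq\operatorname{dom}\delta$ yields $n+k_\gamma=(n)\gamma=(n)\delta=n+k_\delta$, hence $n_\gamma^{\mathbf{r}}-n_\gamma^{\mathbf{d}}=n_\delta^{\mathbf{r}}-n_\delta^{\mathbf{d}}$. For the converse (ii)$\Rightarrow$(i) I would use the canonical idempotent $e=\gamma^{-1}\gamma=\operatorname{id}_{\operatorname{dom}\gamma}$, which lies in $\mathbf{I}\mathbb{N}_{\infty}^{\boldsymbol{g}[j]}$ by inverse-closure of the monoid, and verify, using $\operatorname{dom}\gamma\subseteq\operatorname{dom}\delta$ and $k_\gamma=k_\delta$, that the product $\delta e$ is defined precisely on $\operatorname{dom}\gamma$ and sends each $n$ there to $n+k_\delta=(n)\gamma$; this gives $\gamma=\delta e\preccurlyeq\delta$.

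For (i)$\Leftrightarrow$(iii) I would invoke symmetry under inversion: in any inverse semigroup $\gamma\preccurlyeq\delta$ holds if and only if $\gamma^{-1}\preccurlyeq\delta^{-1}$, and inversion exchanges $\operatorname{dom}$ with $\operatorname{ran}$ while sending $k_\gamma$ to $-k_\gamma$, so the equality of shift differences transfers, turning (ii) applied to $\gamma^{-1},\delta^{-1}$ into (iii) applied to $\gamma,\delta$. I do not expect any substantial obstacle here; the only step that deserves explicit justification is checking that the idempotent witness produced in the (ii)$\Rightarrow$(i) step actually lies in $\mathbf{I}\mathbb{N}_{\infty}^{\boldsymbol{g}[j]}$ and not merely in the ambient $\mathscr{I}_\omega$, and this is immediate from the inverse-subsemigroup property of $\mathbf{I}\mathbb{N}_{\infty}^{\boldsymbol{g}[j]}$ already recorded in the excerpt.
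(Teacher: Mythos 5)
Your argument is correct and follows essentially the same route as the paper, which derives the proposition directly from the definition of the natural partial order together with the fact (Lemma~1 of Gutik--Savchuk, 2018) that every element of $\mathbf{I}\mathbb{N}_{\infty}^{\boldsymbol{g}[j]}$ is a partial shift of $\mathbb{N}$ --- exactly the reduction you carry out in detail. The only cosmetic point is that under the paper's right-action convention $x(\alpha\beta)=(x\alpha)\beta$ the idempotent $\gamma^{-1}\gamma$ is the identity on $\operatorname{ran}\gamma$ rather than on $\operatorname{dom}\gamma$, but this is harmless since in an inverse semigroup $s\preccurlyeq t$ holds via a right idempotent witness if and only if it holds via a left one.
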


It is obvious that in statements $(ii)$ and $(iii)$ of Proposition~\ref{proposition-1.2} we may replace the symbols $n_{\gamma}^{\mathbf{r}}$ and $n_{\gamma}^{\mathbf{d}}$ by $\underline{n}_{\gamma}^{\mathbf{r}}$ and $\underline{n}_{\gamma}^{\mathbf{d}}$, respectively.

The definition of the minimum group congruence $\mathfrak{C}_{\mathbf{mg}}$ on $\mathbf{I}\mathbb{N}_{\infty}^{\boldsymbol{g}[j]}$ and Proposition~\ref{proposition-1.2} imply the following proposition.

\begin{proposition}\label{proposition-1.3}
Let $\gamma$ and $\delta$  be elements of the monoid $\mathbf{I}\mathbb{N}_{\infty}^{\boldsymbol{g}[j]}$. Then
$\gamma\mathfrak{C}_{\mathbf{mg}}\delta$  in $\mathbf{I}\mathbb{N}_{\infty}^{\boldsymbol{g}[j]}$ if and only if
$n_{\gamma}^{\mathbf{r}}-n_{\gamma}^{\mathbf{d}}=n_{\delta}^{\mathbf{r}}-n_{\delta}^{\mathbf{d}}$.
Moreover,  the quotient semigroup $\mathbf{I}\mathbb{N}_{\infty}^{\boldsymbol{g}[j]}/\mathfrak{C}_{\mathbf{mg}}$ is isomorphic to the additive group of integers $\mathbb{Z}(+)$ by the map
\begin{equation*}
\pi_{\mathfrak{C}_{\mathbf{mg}}}\colon \mathbf{I}\mathbb{N}_{\infty}^{\boldsymbol{g}[j]}\to \mathbb{Z}(+), \quad \gamma\mapsto n_{\delta}^{\mathbf{r}}-n_{\delta}^{\mathbf{d}}.
\end{equation*}
\end{proposition}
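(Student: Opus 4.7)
The plan is to prove the two assertions in sequence, reducing both to a single structural observation: every $\gamma\in\mathbf{I}\mathbb{N}_{\infty}^{\boldsymbol{g}[j]}$ coincides with the shift $n\mapsto n+k_\gamma$ on the cofinite set $\big[n_\gamma^{\mathbf{d}}\big)$, where $k_\gamma=n_\gamma^{\mathbf{r}}-n_\gamma^{\mathbf{d}}$. This is exactly the content of the definitions of $n_\gamma^{\mathbf{d}}$, $n_\gamma^{\mathbf{r}}$ and of Lemma~1 of \cite{Gutik-Savchuk-2018}, and it is the invariant already used in Proposition~\ref{proposition-1.2}.

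For the equivalence, let $\sim$ denote the relation $\gamma\sim\delta\iff k_\gamma=k_\delta$. To show $\mathfrak{C}_{\mathbf{mg}}\subseteq\sim$, suppose $e\gamma=e\delta$ for some $e\in E(\mathbf{I}\mathbb{N}_{\infty}^{\boldsymbol{g}[j]})$. Then $e$ is a partial identity whose domain $D$ is cofinite in $\mathbb{N}$, and $e\gamma$, $e\delta$ are the restrictions of $\gamma$, $\delta$ to $D\cap\operatorname{dom}\gamma\cap\operatorname{dom}\delta$. Choosing any $n$ in this cofinite intersection with $n\geqslant\max\big\{n_\gamma^{\mathbf{d}},n_\delta^{\mathbf{d}}\big\}$ and applying the structural observation to both elements gives $n+k_\gamma=n+k_\delta$, hence $k_\gamma=k_\delta$. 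Conversely, if $k_\gamma=k_\delta$, I would take $e$ to be the partial identity on $\big[\max\{n_\gamma^{\mathbf{d}},n_\delta^{\mathbf{d}}\}\big)$; this $e$ lies in $\mathbf{I}\mathbb{N}_{\infty}^{\boldsymbol{g}[j]}$ since its noise is $0\leqslant j$, and $e\gamma$ and $e\delta$ both equal the restriction of the common shift $n\mapsto n+k_\gamma$ to that cofinite tail, so $e\gamma=e\delta$ and $\gamma\mathfrak{C}_{\mathbf{mg}}\delta$.

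For the quotient assertion, the map $\pi\colon\gamma\mapsto k_\gamma$ is well defined on $\mathfrak{C}_{\mathbf{mg}}$-classes by the equivalence just proved, and injectivity of the induced map $\overline{\pi}\colon\mathbf{I}\mathbb{N}_{\infty}^{\boldsymbol{g}[j]}/\mathfrak{C}_{\mathbf{mg}}\to\mathbb{Z}(+)$ is the same statement read backwards. For the homomorphism property, it suffices to check $k_{\gamma\delta}=k_\gamma+k_\delta$: pick any $n\in\operatorname{dom}(\gamma\delta)$ with $n\geqslant n_\gamma^{\mathbf{d}}$ and $n\gamma\geqslant n_\delta^{\mathbf{d}}$ (such $n$ exist because $\operatorname{dom}(\gamma\delta)$ is cofinite), then the structural observation gives $(n)\gamma\delta=n+k_\gamma+k_\delta$, forcing $\gamma\delta$ to act as the shift by $k_\gamma+k_\delta$ on a cofinite tail and hence $k_{\gamma\delta}=k_\gamma+k_\delta$. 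Surjectivity is witnessed by $\alpha^k\in\mathscr{C}_{\mathbb{N}}\subseteq\mathbf{I}\mathbb{N}_{\infty}^{\boldsymbol{g}[j]}$ for $k\geqslant 0$ and by $\beta^{|k|}$ for $k<0$, using the generators from Remark~\ref{remark-1.1}.

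The only point that is not purely definitional is the additivity of $k_{(\cdot)}$ under composition, but once one restricts to a cofinite tail where both $\gamma$ and $\delta$ act as honest shifts this reduces to the trivial identity $(n+k_\gamma)+k_\delta=n+(k_\gamma+k_\delta)$. In this sense the proposition is essentially a direct corollary of Proposition~\ref{proposition-1.2}, mirroring the analogous result for $\mathbf{I}\mathbb{N}_{\infty}$ obtained in \cite{Gutik-Savchuk-2018}, the only new content being the verification that the idempotent $e$ constructed in the reverse inclusion still lives in $\mathbf{I}\mathbb{N}_{\infty}^{\boldsymbol{g}[j]}$ (which it does trivially, as a noiseless partial identity).
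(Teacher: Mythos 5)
Your argument is correct and follows essentially the same route as the paper, which simply derives the proposition from the definition of $\mathfrak{C}_{\mathbf{mg}}$ together with the partial-shift structure of elements of $\mathbf{I}\mathbb{N}_{\infty}^{\boldsymbol{g}[j]}$ (Lemma~1 of the cited work) underlying Proposition~\ref{proposition-1.2}; your write-up merely makes the one-line deduction explicit. The only cosmetic slip is in the forward direction, where $e\gamma$ is the restriction of $\gamma$ to $D\cap\operatorname{dom}\gamma$ rather than to $D\cap\operatorname{dom}\gamma\cap\operatorname{dom}\delta$, but since $e\gamma=e\delta$ forces these domains to coincide, the evaluation at a large common point goes through unchanged.
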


\begin{example}\label{example-1.4}
We put $\mathcal{C}\mathbf{I}\mathbb{N}_{\infty}^{\boldsymbol{g}[j]}=\mathbf{I}\mathbb{N}_{\infty}^{\boldsymbol{g}[j]}\sqcup\mathbb{Z}(+)$ and extend the multiplications from $\mathbf{I}\mathbb{N}_{\infty}^{\boldsymbol{g}[j]}$ and $\mathbb{Z}(+)$ onto $\mathcal{C}\mathbf{I}\mathbb{N}_{\infty}^{\boldsymbol{g}[j]}$ in the following way:
\begin{equation*}
  k\cdot\gamma=\gamma\cdot k=k+(\gamma)\pi_{\mathfrak{C}_{\mathbf{mg}}}\in \mathbb{Z}(+), \qquad \hbox{for all} \quad k\in\mathbb{Z}(+) \quad \hbox{and} \quad \gamma\in \mathbf{I}\mathbb{N}_{\infty}^{\boldsymbol{g}[j]}.
\end{equation*}
By Theorem~2.17 from \cite[Vol.~1, p.~77]{Carruth-Hildebrant-Koch-1983-1986} so defined binary operation is a semigroup operation on $\mathcal{C}\mathbf{I}\mathbb{N}_{\infty}^{\boldsymbol{g}[j]}$ such that $\mathbb{Z}(+)$ is an ideal in $\mathcal{C}\mathbf{I}\mathbb{N}_{\infty}^{\boldsymbol{g}[j]}$. Also,  this semigroup operation extends the  natural partial order $\preccurlyeq$ from $\mathbf{I}\mathbb{N}_{\infty}^{\boldsymbol{g}[j]}$ onto $\mathcal{C}\mathbf{I}\mathbb{N}_{\infty}^{\boldsymbol{g}[j]}$ in the following way:
\begin{itemize}
  \item[$(i)$] all distinct elements of $\mathbb{Z}(+)$ are pair-wise incomparable;
  \item[$(ii)$] $k\preccurlyeq\gamma$ if and only if $n_{\gamma}^{\mathbf{r}}-n_{\gamma}^{\mathbf{d}}=k$ for $k\in \mathbb{Z}(+)$ and $\gamma\in \mathbf{I}\mathbb{N}_{\infty}^{\boldsymbol{g}[j]}$.
\end{itemize}
For any $x\in \mathcal{C}\mathbf{I}\mathbb{N}_{\infty}^{\boldsymbol{g}[j]}$ we denote ${\uparrow_{\preccurlyeq}}x=\big\{y\in \mathcal{C}\mathbf{I}\mathbb{N}_{\infty}^{\boldsymbol{g}[j]}\colon x\preccurlyeq y\big\}$.
\end{example}

By Proposition~7 of \cite{Gutik-Savchuk-2018} the map $\mathfrak{h}\colon \mathbf{I}\mathbb{N}_{\infty}\to \mathscr{C}_{\mathbb{N}}$, $\gamma\mapsto \overrightarrow{\gamma}$ is a homomorphism. Then its restriction $\mathfrak{h}|_{\mathbf{I}\mathbb{N}_{\infty}^{\boldsymbol{g}[j]}}\colon \mathbf{I}\mathbb{N}_{\infty}^{\boldsymbol{g}[j]}\to \mathscr{C}_{\mathbb{N}}$ is homomorphism, too.

A \emph{homomorphic retraction} of a semigroup $S$ is a map from $S$ into $S$ which is both a retraction and a homomorphism. The image of the homomorphic retraction is called a \emph{homomorphic retract}. These terms seem to have first appeared in \cite{Brown-1965}.

Since $(\gamma)\mathfrak{h}=\overrightarrow{\gamma}=\gamma$ for any $\gamma\in \mathscr{C}_{\mathbb{N}}$ we get the following proposition.

\begin{proposition}\label{proposition-1.5}
The map $\mathfrak{h}\colon \mathbf{I}\mathbb{N}_{\infty}^{\boldsymbol{g}[j]}\to \mathscr{C}_{\mathbb{N}}$, $\gamma\mapsto \overrightarrow{\gamma}$ is a homomorphic retraction, and hence the monoid $\mathscr{C}_{\mathbb{N}}$ is a homomorphic retract of $\mathbf{I}\mathbb{N}_{\infty}^{\boldsymbol{g}[j]}$.
\end{proposition}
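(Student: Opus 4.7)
The plan is to verify the two requirements packed into the term \emph{homomorphic retraction}: first that $\mathfrak{h}\colon \mathbf{I}\mathbb{N}_{\infty}^{\boldsymbol{g}[j]}\to \mathscr{C}_{\mathbb{N}}$ is a semigroup homomorphism, and second that it restricts to the identity on the subsemigroup $\mathscr{C}_{\mathbb{N}}$. Together these force $\mathfrak{h}\circ \mathfrak{h}=\mathfrak{h}$ with image exactly $\mathscr{C}_{\mathbb{N}}$, which is the retraction property.

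For the homomorphism part I would simply invoke Proposition~7 of \cite{Gutik-Savchuk-2018}, which establishes that the assignment $\gamma\mapsto \overrightarrow{\gamma}$ defines a homomorphism from the larger semigroup $\mathbf{I}\mathbb{N}_{\infty}$ onto $\mathscr{C}_{\mathbb{N}}$. Since $\mathbf{I}\mathbb{N}_{\infty}^{\boldsymbol{g}[j]}$ is an inverse subsemigroup of $\mathbf{I}\mathbb{N}_{\infty}$ (by the construction recalled in the excerpt, where $\mathbf{I}\mathbb{N}_{\infty}^{\boldsymbol{g}[j]}$ sits inside the infinite chain of subsemigroups of $\mathbf{I}\mathbb{N}_{\infty}$), the restriction $\mathfrak{h}|_{\mathbf{I}\mathbb{N}_{\infty}^{\boldsymbol{g}[j]}}$ is a homomorphism into $\mathscr{C}_{\mathbb{N}}$. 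Also $\mathscr{C}_{\mathbb{N}}=\mathbf{I}\mathbb{N}_{\infty}^{\boldsymbol{g}[0]}\subseteq \mathbf{I}\mathbb{N}_{\infty}^{\boldsymbol{g}[j]}$, so the map makes sense as an endomorphism whose image lies in $\mathscr{C}_{\mathbb{N}}$.

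For the retraction part, I would unwind the definitions attached to the symbol $\overrightarrow{\gamma}$. If $\gamma\in\mathscr{C}_{\mathbb{N}}$, then $\gamma$ is itself a shift of an interval $[m)$ for some positive integer $m$; in particular $\operatorname{dom}\gamma=[m)$ and the restriction of $\gamma$ to $[m)$ equals $\gamma$. Hence the smallest positive integer $n_{\gamma}^{\mathbf{d}}$ with $\gamma|_{[n_{\gamma}^{\mathbf{d}})}\in \mathscr{C}_{\mathbb{N}}$ is precisely $m=\underline{n}_{\gamma}^{\mathbf{d}}$, so $\overrightarrow{\gamma}=\gamma|_{[n_{\gamma}^{\mathbf{d}})}=\gamma$, i.e.\ $(\gamma)\mathfrak{h}=\gamma$ for every $\gamma\in\mathscr{C}_{\mathbb{N}}$. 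This is exactly the observation made in the sentence preceding the proposition.

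There is essentially no obstacle here: once Proposition~7 of \cite{Gutik-Savchuk-2018} is taken as given, the entire argument reduces to checking that the arrow $\gamma\mapsto\overrightarrow{\gamma}$ fixes the elements of $\mathscr{C}_{\mathbb{N}}$ pointwise, and this is immediate from the definition of $n_{\gamma}^{\mathbf{d}}$. The only thing worth a moment's care is confirming that the codomain can indeed be taken to be $\mathscr{C}_{\mathbb{N}}$, which is built into the very definition of $\overrightarrow{\gamma}$ as the maximal restriction of $\gamma$ lying in $\mathscr{C}_{\mathbb{N}}$.
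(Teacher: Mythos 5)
Your proposal is correct and follows essentially the same route as the paper: the homomorphism property is delegated to Proposition~7 of the cited work on $\mathbf{I}\mathbb{N}_{\infty}$ and then restricted to the subsemigroup $\mathbf{I}\mathbb{N}_{\infty}^{\boldsymbol{g}[j]}$, while the retraction property follows from the observation that $n_{\gamma}^{\mathbf{d}}=\underline{n}_{\gamma}^{\mathbf{d}}$ and hence $\overrightarrow{\gamma}=\gamma$ for $\gamma\in\mathscr{C}_{\mathbb{N}}$. This is exactly the argument the paper gives in the two sentences preceding the proposition.
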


For any subset $M\subseteq \{2,\ldots,j\}$ we denote
\begin{equation*}
  \mathbf{I}\mathbb{N}_{\infty}^{\boldsymbol{g}[j]}[M]=\big\{\gamma\in \mathbf{I}\mathbb{N}_{\infty}^{\boldsymbol{g}[j]}\colon   n_{\gamma}^{\mathbf{d}}-x\in M\cup\{0\} \: \hbox{for all} \: x\in \operatorname{dom}\gamma \: \hbox{such that} \: x\leqslant n_{\gamma}^{\mathbf{d}}\big\}.
\end{equation*}

For arbitrary $M_1,M_2\subseteq \{2,\ldots,j\}$ it is obvious that $\mathbf{I}\mathbb{N}_{\infty}^{\boldsymbol{g}[j]}[M_1]\subseteq \mathbf{I}\mathbb{N}_{\infty}^{\boldsymbol{g}[j]}[M_2]$ if and only if $M_1\subseteq M_2$, and moreover we have that $\mathbf{I}\mathbb{N}_{\infty}^{\boldsymbol{g}[j]}[M]=\mathscr{C}_{\mathbb{N}}$ when $M=\varnothing$ and $\mathbf{I}\mathbb{N}_{\infty}^{\boldsymbol{g}[j]}[M]=\mathbf{I}\mathbb{N}_{\infty}^{\boldsymbol{g}[j]}$ when $M=\{2,\ldots,j\}$.

\begin{remark}\label{remark-1.6}
By Lemma~1 of \cite{Gutik-Savchuk-2018} we get that
\begin{equation*}
  \mathbf{I}\mathbb{N}_{\infty}^{\boldsymbol{g}[j]}[M]=\big\{\gamma\in \mathbf{I}\mathbb{N}_{\infty}^{\boldsymbol{g}[j]}\colon   n_{\gamma}^{\mathbf{r}}-x\in M\cup\{0\} \: \hbox{for all} \: x\in \operatorname{ran}\gamma \: \hbox{such that} \: x\leqslant n_{\gamma}^{\mathbf{r}}\big\}.
\end{equation*}
\end{remark}

\begin{proposition}\label{proposition-1.7}
$\mathbf{I}\mathbb{N}_{\infty}^{\boldsymbol{g}[j]}[M]$ is an inverse semigroup of $\mathbf{I}\mathbb{N}_{\infty}^{\boldsymbol{g}[j]}$ for any $M\subseteq \{2,\ldots,j\}$.
\end{proposition}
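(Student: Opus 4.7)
The plan is to verify that $\mathbf{I}\mathbb{N}_{\infty}^{\boldsymbol{g}[j]}[M]$ is closed under both the semigroup operation and inversion inside the already-known inverse semigroup $\mathbf{I}\mathbb{N}_{\infty}^{\boldsymbol{g}[j]}$. The set is nonempty because it contains $\mathscr{C}_{\mathbb{N}}$, and by Lemma~1 of~\cite{Gutik-Savchuk-2018} every $\gamma\in\mathbf{I}\mathbb{N}_{\infty}^{\boldsymbol{g}[j]}$ is a partial shift, i.e., there exists $k_{\gamma}\in\mathbb{Z}$ with $(x)\gamma=x+k_{\gamma}$ for all $x\in\operatorname{dom}\gamma$; this is the principal structural input.

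Closure under inversion is immediate from Remark~\ref{remark-1.6}. If $\gamma\in\mathbf{I}\mathbb{N}_{\infty}^{\boldsymbol{g}[j]}[M]$, the remark gives $n_{\gamma}^{\mathbf{r}}-y\in M\cup\{0\}$ for every $y\in\operatorname{ran}\gamma$ with $y\leqslant n_{\gamma}^{\mathbf{r}}$. Since $\operatorname{dom}\gamma^{-1}=\operatorname{ran}\gamma$ and $n_{\gamma^{-1}}^{\mathbf{d}}=n_{\gamma}^{\mathbf{r}}$, this is exactly the defining condition for $\gamma^{-1}\in\mathbf{I}\mathbb{N}_{\infty}^{\boldsymbol{g}[j]}[M]$.

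For closure under multiplication, take $\gamma,\delta\in\mathbf{I}\mathbb{N}_{\infty}^{\boldsymbol{g}[j]}[M]$ and set $a=n_{\gamma}^{\mathbf{d}}$, $b=n_{\delta}^{\mathbf{d}}$, $k=k_{\gamma}$, and $N=\max\{a,\,b-k\}$. The key preliminary claim is $n_{\gamma\delta}^{\mathbf{d}}=N$. For $x\geqslant N$ one has $x\geqslant a$ and $(x)\gamma=x+k\geqslant b$, so $x\in\operatorname{dom}(\gamma\delta)$ and $\gamma\delta$ acts there as the shift $x\mapsto x+k+k_{\delta}$. Conversely $N-1\notin\operatorname{dom}(\gamma\delta)$: if $N=a$, then inclusion of $a-1$ would force $a-1\in\operatorname{dom}\gamma$ with $a-(a-1)=1\in M$; if $N=b-k>a$, then $b-k-1\geqslant a$, so $(b-k-1)\gamma=b-1$, and inclusion of $b-k-1$ would force $b-1\in\operatorname{dom}\delta$ with $b-(b-1)=1\in M$. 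Both alternatives contradict $M\subseteq\{2,\ldots,j\}$. Now for any $x\in\operatorname{dom}(\gamma\delta)$ with $x\leqslant N$: in the case $N=a$, since $x\in\operatorname{dom}\gamma$ and $x\leqslant a$ we get $N-x=a-x\in M\cup\{0\}$; in the case $N=b-k$, since $x+k\in\operatorname{dom}\delta$ and $x+k\leqslant b$ we get $N-x=b-(x+k)\in M\cup\{0\}$ by Remark~\ref{remark-1.6} applied to $\delta$. Hence $\gamma\delta\in\mathbf{I}\mathbb{N}_{\infty}^{\boldsymbol{g}[j]}[M]$.

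The principal obstacle is the exact determination of $n_{\gamma\delta}^{\mathbf{d}}$: the entire argument pivots on the fact that $M\subseteq\{2,\ldots,j\}$ excludes the offset~$1$, for otherwise $\operatorname{dom}(\gamma\delta)$ could extend past $N-1$ and the $M$-condition would require substantially more delicate bookkeeping. Beyond this single observation, the proof reduces to a routine translation between the domain-side and range-side descriptions furnished by the definition and Remark~\ref{remark-1.6}.
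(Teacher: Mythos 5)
Your proof is correct and follows essentially the same route as the paper's: your dichotomy $N=n_{\gamma}^{\mathbf{d}}$ versus $N=n_{\delta}^{\mathbf{d}}-k_{\gamma}$ is exactly the paper's case split $n_{\gamma}^{\mathbf{r}}>n_{\delta}^{\mathbf{d}}$ versus $n_{\gamma}^{\mathbf{r}}\leqslant n_{\delta}^{\mathbf{d}}$, after which the $M$-condition is pulled back from the dominating factor and inversion is handled by Remark~\ref{remark-1.6}. The only differences are cosmetic: you stay on the domain side via the shift where the paper passes to the range side, and your explicit verification that $n_{\gamma\delta}^{\mathbf{d}}=N$ supplies detail the paper merely asserts.
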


\begin{proof}
Fix any $\gamma,\delta\in\mathbf{I}\mathbb{N}_{\infty}^{\boldsymbol{g}[j]}[M]$. We consider the following cases.
\begin{enumerate}
  \item If $n_{\gamma}^{\mathbf{r}}\leqslant n_{\delta}^{\mathbf{d}}$ then $n_{\gamma\delta}^{\mathbf{r}}=n_{\delta}^{\mathbf{r}}$ and $\operatorname{ran}(\gamma\delta)\subseteq \operatorname{ran}\delta$, because by Lemma~1 from \cite{Gutik-Savchuk-2018} all elements of $\mathbf{I}\mathbb{N}_{\infty}$ are partial shifts of the set $\mathbb{N}$. This and Remark~\ref{remark-1.6} imply that $\gamma\delta\in\mathbf{I}\mathbb{N}_{\infty}^{\boldsymbol{g}[j]}[M]$.
  \item If $n_{\gamma}^{\mathbf{r}}> n_{\delta}^{\mathbf{d}}$ then by similar arguments as in the previous case we get that $n_{\gamma\delta}^{\mathbf{d}}=n_{\gamma}^{\mathbf{d}}$ and $\operatorname{dom}(\gamma\delta)\subseteq \operatorname{dom}\delta$. This implies that $\gamma\delta\in\mathbf{I}\mathbb{N}_{\infty}^{\boldsymbol{g}[j]}[M]$.
\end{enumerate}

Remark~\ref{remark-1.6} implies that if $\gamma\in\mathbf{I}\mathbb{N}_{\infty}^{\boldsymbol{g}[j]}[M]$ then so is $\gamma^{-1}$.
\end{proof}

\section{On a topologization and a closure of the monoid $\mathbf{I}\mathbb{N}_{\infty}^{\boldsymbol{g}[j]}$}\label{sec-3}

Later in the paper by $\mathbb{I}$ we denote the identity map of $\mathbb{N}$, and assume that $\alpha$ and $\beta$ are the elements of the submonoid $\mathscr{C}_{\mathbb{N}}$ in $\mathbf{I}\mathbb{N}_{\infty}$ which are defined in Remark~\ref{remark-1.1}.

It is obvious that $\alpha\beta=\mathbb{I}$ and $\beta\alpha$ is the identity map of $\mathbb{N}\setminus\{1\}$. This implies the following lemma.

\begin{lemma}\label{lemma-2.1}
If $\gamma\in \mathbf{I}\mathbb{N}_{\infty}$, then
\begin{itemize}
  \item[$(i)$] $\beta\alpha\cdot\gamma=\gamma$ if and only if $\operatorname{dom}\gamma\subseteq \mathbb{N}\setminus\{1\}$;
  \item[$(ii)$] $\gamma\cdot\beta\alpha=\gamma$ if and only if $\operatorname{ran}\gamma\subseteq \mathbb{N}\setminus\{1\}$.
\end{itemize}
\end{lemma}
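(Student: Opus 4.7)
The plan is to exploit the observation just noted before the lemma, namely that $\beta\alpha$ is the identity map of $\mathbb{N}\setminus\{1\}$, together with the formula for composition in the symmetric inverse monoid $\mathscr{I}_\omega$: for partial maps $\varphi,\psi$ one has $\operatorname{dom}(\varphi\psi)=\{x\in\operatorname{dom}\varphi\colon(x)\varphi\in\operatorname{dom}\psi\}$ and $(x)(\varphi\psi)=((x)\varphi)\psi$ on this set.

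For part $(i)$, I would compute $\operatorname{dom}(\beta\alpha\cdot\gamma)$ directly. Since $\operatorname{dom}(\beta\alpha)=\mathbb{N}\setminus\{1\}$ and $\beta\alpha$ acts as the identity there, we get
\begin{equation*}
\operatorname{dom}(\beta\alpha\cdot\gamma)=\{x\in\mathbb{N}\setminus\{1\}\colon x\in\operatorname{dom}\gamma\}=\operatorname{dom}\gamma\setminus\{1\},
\end{equation*}
and on this set $(x)(\beta\alpha\cdot\gamma)=(x)\gamma$. Hence $\beta\alpha\cdot\gamma$ is the restriction of $\gamma$ to $\operatorname{dom}\gamma\setminus\{1\}$. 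This restriction coincides with $\gamma$ if and only if $1\notin\operatorname{dom}\gamma$, i.e.\ $\operatorname{dom}\gamma\subseteq\mathbb{N}\setminus\{1\}$, which proves both implications at once.

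Part $(ii)$ is symmetric: computing
\begin{equation*}
\operatorname{dom}(\gamma\cdot\beta\alpha)=\{x\in\operatorname{dom}\gamma\colon(x)\gamma\in\mathbb{N}\setminus\{1\}\}=\{x\in\operatorname{dom}\gamma\colon(x)\gamma\ne 1\},
\end{equation*}
with $(x)(\gamma\cdot\beta\alpha)=(x)\gamma$ on this set, shows that $\gamma\cdot\beta\alpha$ is the restriction of $\gamma$ to the preimage of $\mathbb{N}\setminus\{1\}$ under $\gamma$. This equals $\gamma$ iff $1\notin\operatorname{ran}\gamma$, i.e.\ $\operatorname{ran}\gamma\subseteq\mathbb{N}\setminus\{1\}$. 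Alternatively, $(ii)$ can be deduced from $(i)$ by applying the inversion: $\gamma\cdot\beta\alpha=\gamma$ iff $(\beta\alpha)^{-1}\cdot\gamma^{-1}=\gamma^{-1}$, and since $\beta\alpha$ is an idempotent it is self-inverse, and $\operatorname{dom}\gamma^{-1}=\operatorname{ran}\gamma$, so $(i)$ applied to $\gamma^{-1}$ yields the claim.

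There is no real obstacle here; the lemma is a direct unwinding of the definitions of $\alpha$, $\beta$ from Remark~\ref{remark-1.1} and of composition in $\mathscr{I}_\omega$. The only thing to be careful about is not to confuse $\beta\alpha$ (the idempotent that restricts to $\mathbb{N}\setminus\{1\}$) with $\alpha\beta=\mathbb{I}$, and to remember that in this paper maps are written on the right, so $\beta\alpha\cdot\gamma$ means ``first $\beta\alpha$, then $\gamma$.''
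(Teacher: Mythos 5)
Your proof is correct and is exactly the unwinding the paper has in mind: the paper gives no separate proof, merely noting beforehand that $\beta\alpha$ is the identity map of $\mathbb{N}\setminus\{1\}$ and stating that this implies the lemma, which is precisely your computation of $\operatorname{dom}(\beta\alpha\cdot\gamma)$ and $\operatorname{dom}(\gamma\cdot\beta\alpha)$. Both your direct argument for $(ii)$ and the alternative via inversion are sound, and you have the right-action composition convention right.
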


For any positive integer $i$ let $\varepsilon^{[i]}$ be the identity map of the set $\mathbb{N}\setminus\{i\}$.

The following theorem generalized the results on the topologizabily of the bicyclic monoid obtained in \cite{Eberhart-Selden-1969} and \cite{Bertman-West-1976}.

\begin{theorem}\label{theorem-2.2}
For any positive integer $j$ every Hausdorff shift-continuous  topology $\tau$ on $\mathbf{I}\mathbb{N}_{\infty}^{\boldsymbol{g}[j]}$ is discrete.
\end{theorem}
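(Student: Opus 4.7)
The plan is to adapt the classical Eberhart-Selden theorem, as extended by Bertman-West to the Hausdorff semitopological setting, from the bicyclic monoid to $\mathbf{I}\mathbb{N}_{\infty}^{\boldsymbol{g}[j]}$. The argument divides into two stages: first show that the identity $\mathbb{I}$ is isolated in $(\mathbf{I}\mathbb{N}_{\infty}^{\boldsymbol{g}[j]},\tau)$, then propagate isolation to every element of the monoid using shift-continuity and the simplicity established in Proposition~\ref{proposition-1.1}$(v)$.

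For Stage~1, I would argue by contradiction, assuming a net $\{\gamma_{\lambda}\}\subseteq\mathbf{I}\mathbb{N}_{\infty}^{\boldsymbol{g}[j]}\setminus\{\mathbb{I}\}$ with $\gamma_{\lambda}\to\mathbb{I}$. The central computation is that, for any $\gamma\in\mathbf{I}\mathbb{N}_{\infty}^{\boldsymbol{g}[j]}$, the conjugate $\alpha^{n}\gamma\beta^{n}$ is obtained by sliding the domain and range of $\gamma$ downwards by $n$: once $n\geqslant n_{\gamma}^{\mathbf{d}}-1$ the noise is pushed off the bottom of $\mathbb{N}$ and $\alpha^{n}\gamma\beta^{n}$ lands inside the bicyclic submonoid $\mathscr{C}_{\mathbb{N}}$, stabilising at the pure shift corresponding to $n_{\gamma}^{\mathbf{r}}-n_{\gamma}^{\mathbf{d}}$. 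Shift-continuity transfers the convergence, giving $\alpha^{n}\gamma_{\lambda}\beta^{n}\to\mathbb{I}$ for every fixed $n\geqslant 0$. A case analysis on the parameters $n_{\gamma_{\lambda}}^{\mathbf{d}}$ and the shift $n_{\gamma_{\lambda}}^{\mathbf{r}}-n_{\gamma_{\lambda}}^{\mathbf{d}}$ then reduces the situation either to a finite subset of $\mathbf{I}\mathbb{N}_{\infty}^{\boldsymbol{g}[j]}\setminus\{\mathbb{I}\}$ (where Hausdorff separation forces $\gamma_{\lambda}=\mathbb{I}$ cofinally, a contradiction) or to the bicyclic submonoid itself, where the subspace topology inherited on $\mathscr{C}_{\mathbb{N}}$ is Hausdorff shift-continuous and the Bertman-West theorem applies directly.

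For Stage~2, once $\mathbb{I}$ is $\tau$-isolated, simplicity of $\mathbf{I}\mathbb{N}_{\infty}^{\boldsymbol{g}[j]}$ yields, for every $\gamma$, elements $u,v\in\mathbf{I}\mathbb{N}_{\infty}^{\boldsymbol{g}[j]}$ with $u\gamma v=\mathbb{I}$. The set $\{\delta:u\delta v=\mathbb{I}\}=\rho_{v}^{-1}(\lambda_{u}^{-1}(\{\mathbb{I}\}))$ is then a $\tau$-open neighbourhood of $\gamma$; it is furthermore contained in the upper set ${\uparrow_{\preccurlyeq}}(u^{-1}v^{-1})$, which is finite by Proposition~\ref{proposition-1.2} and cofiniteness of domains. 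Intersecting with open sets separating $\gamma$ from the remaining finitely many candidates produces $\{\gamma\}$ as $\tau$-open, completing the proof that $\tau$ is discrete.

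The hard part will be Stage~1. In sharp contrast with the bicyclic monoid, whose band forms a linearly ordered chain $\mathbb{I}\succcurlyeq\beta\alpha\succcurlyeq\beta^{2}\alpha^{2}\succcurlyeq\cdots$, the idempotents of $\mathbf{I}\mathbb{N}_{\infty}^{\boldsymbol{g}[j]}$ form an intricate band of noisy partial identities $\varepsilon_{D}$. Controlling the conjugation action $\gamma\mapsto\alpha^{n}\gamma\beta^{n}$ \emph{uniformly} over the net --- whose stabilisation threshold $n$ depends on the individual $\gamma_{\lambda}$ --- and marshalling the subcases into a single clean reduction to the Bertman-West argument is the technically delicate heart of the proof.
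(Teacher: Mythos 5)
Your Stage~2 is sound and is essentially the paper's own second half: simplicity gives $u\chi v=\mathbb{I}$, separate continuity makes $\{\delta\colon u\delta v=\mathbb{I}\}$ an open neighbourhood of $\chi$ once $\mathbb{I}$ is isolated, and that set is finite (your observation that it sits inside the finite upper set ${\uparrow_{\preccurlyeq}}(u^{-1}v^{-1})$ is a perfectly good substitute for the paper's citation of the finite-to-one property of translations). The problem is Stage~1, which you have sketched rather than proved, and the sketch has a real hole exactly where you flag ``the technically delicate heart.'' The claimed dichotomy --- that the net $\gamma_{\lambda}\to\mathbb{I}$ reduces either to a finite subset of $\mathbf{I}\mathbb{N}_{\infty}^{\boldsymbol{g}[j]}\setminus\{\mathbb{I}\}$ or to the bicyclic submonoid where Bertman--West applies --- is not established and does not follow from the conjugation computation. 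Since $n_{\gamma}^{\mathbf{d}}$ is unbounded on $\mathbf{I}\mathbb{N}_{\infty}^{\boldsymbol{g}[j]}$ (only the difference $n_{\gamma}^{\mathbf{d}}-\underline{n}_{\gamma}^{\mathbf{d}}$ is bounded by $j$), no fixed conjugation $\gamma\mapsto\alpha^{n}\gamma\beta^{n}$ pushes a whole net into $\mathscr{C}_{\mathbb{N}}$: already the idempotents with domain $[m)\setminus\{m+1\}$, $m\in\mathbb{N}$, have noise $2$, lie outside $\mathscr{C}_{\mathbb{N}}$, and require conjugation depth $n\geqslant m$. A priori such a net could converge to $\mathbb{I}$, and nothing in your argument excludes this; shift-continuity only lets you pass to the limit for each \emph{fixed} $n$, which is useless when the stabilisation threshold runs away along the net. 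Until this case is handled, the isolation of $\mathbb{I}$ is not proved.

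For comparison, the paper avoids nets and conjugation entirely: since $\beta\alpha$ is an idempotent, the translations $x\mapsto\beta\alpha x$ and $x\mapsto x\beta\alpha$ are continuous idempotent self-maps, so $\beta\alpha\cdot\mathbf{I}\mathbb{N}_{\infty}^{\boldsymbol{g}[j]}$ and $\mathbf{I}\mathbb{N}_{\infty}^{\boldsymbol{g}[j]}\cdot\beta\alpha$ are retracts, hence closed in the Hausdorff space. By Lemma~\ref{lemma-2.1} their union misses exactly the $\gamma$ with $1\in\operatorname{dom}\gamma$ and $1\in\operatorname{ran}\gamma$; being partial shifts, these are idempotents above $\varepsilon^{[2]}\cdots\varepsilon^{[j-1]}$, of which there are only finitely many. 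So $\mathbb{I}$ has a finite open neighbourhood and is isolated. Note that this argument handles precisely the troublesome family above: those idempotents lie in the closed set $\beta\alpha\cdot\mathbf{I}\mathbb{N}_{\infty}^{\boldsymbol{g}[j]}$, which does not contain $\mathbb{I}$. If you want to keep your reduction-to-Bertman--West strategy, you would need an additional device of this kind to kill nets whose members stay outside $\mathscr{C}_{\mathbb{N}}$ at every fixed conjugation depth; as written, Stage~1 is a gap, not a proof.
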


\begin{proof}
Since $\tau$ is Hausdorff, every retract of $\big(\mathbf{I}\mathbb{N}_{\infty}^{\boldsymbol{g}[j]},\tau\big)$ is its closed subset. It is obvious  that $\beta\alpha \cdot\mathbf{I}\mathbb{N}_{\infty}^{\boldsymbol{g}[j]}$ and $\mathbf{I}\mathbb{N}_{\infty}^{\boldsymbol{g}[j]}\cdot\beta\alpha$ are retracts of the topological space $\big(\mathbf{I}\mathbb{N}_{\infty}^{\boldsymbol{g}[j]},\tau\big)$, because $\beta\alpha$ is an idempotent of $\mathbf{I}\mathbb{N}_{\infty}^{\boldsymbol{g}[j]}$. Later we shall show that the set $\mathbf{I}\mathbb{N}_{\infty}^{\boldsymbol{g}[j]}\setminus\big(\beta\alpha\cdot \mathbf{I}\mathbb{N}_{\infty}^{\boldsymbol{g}[j]}\cup \mathbf{I}\mathbb{N}_{\infty}^{\boldsymbol{g}[j]}\cdot\beta\alpha\big)$ is finite.

By Lemma~\ref{lemma-2.1}, $\gamma\in \mathbf{I}\mathbb{N}_{\infty}^{\boldsymbol{g}[j]}\setminus\big(\beta\alpha\cdot \mathbf{I}\mathbb{N}_{\infty}^{\boldsymbol{g}[j]}\cup \mathbf{I}\mathbb{N}_{\infty}^{\boldsymbol{g}[j]}\cdot\beta\alpha\big)$ if and only if $1\in\operatorname{dom}\gamma$, $1\in\operatorname{ran}\gamma$, and $n_{\gamma}^{\mathbf{d}}-\underline{n}_{\gamma}^{\mathbf{d}}\leqslant j$. Then by Lemma~1 of \cite{Gutik-Savchuk-2018}, $\gamma$ is a partial shift of the set of integers, and hence $\gamma$ is an idempotent of $\mathbf{I}\mathbb{N}_{\infty}^{\boldsymbol{g}[j]}$ such that $1\in\operatorname{dom}\gamma$ and $\varepsilon^{[2]}\cdot \ldots\cdot \varepsilon^{[j-1]}\preccurlyeq \gamma$. It is obvious that such idempotents $\gamma$ are finitely many in $\mathbf{I}\mathbb{N}_{\infty}^{\boldsymbol{g}[j]}$, and hence the set $\mathbf{I}\mathbb{N}_{\infty}^{\boldsymbol{g}[j]}\setminus\big(\beta\alpha\cdot \mathbf{I}\mathbb{N}_{\infty}^{\boldsymbol{g}[j]}\cup \mathbf{I}\mathbb{N}_{\infty}^{\boldsymbol{g}[j]}\cdot\beta\alpha\big)$ is finite. This implies that the point $\mathbb{I}$ has a finite open neighbourhood and hence $\mathbb{I}$ is an isolated point of the topological space $\big(\mathbf{I}\mathbb{N}_{\infty}^{\boldsymbol{g}[j]},\tau\big)$.

We observe that $\mathbf{I}\mathbb{N}_{\infty}$, and hence $\mathbf{I}\mathbb{N}_{\infty}^{\boldsymbol{g}[j]}$, is a submonoid of the  semigroup $\mathscr{I}_{\infty}^{\!\nearrow}(\mathbb{N})$ of cofinite monotone partial bijections of $\mathbb{N}$ \cite{Gutik-Savchuk-2018}. By Proposition~2.2 of \cite{Gutik-Repovs-2011} every right translation and every left translation by an element of the semigroup
$\mathscr{I}_{\infty}^{\!\nearrow}(\mathbb{N})$  is a finite-to-one map, and hence such conditions hold for the semigroup $\mathbf{I}\mathbb{N}_{\infty}^{\boldsymbol{g}[j]}$. Also by Theorem~5 of \cite{Gutik-Savchuk-2019}, $\mathbf{I}\mathbb{N}_{\infty}^{\boldsymbol{g}[j]}$ is a simple semigroup. This implies that for any $\chi\in \mathbf{I}\mathbb{N}_{\infty}^{\boldsymbol{g}[j]}$ there exist $\alpha,\beta\in\mathbf{I}\mathbb{N}_{\infty}^{\boldsymbol{g}[j]}$ such that $\alpha\chi\beta=\mathbb{I}$, and moreover the equality $\alpha\chi\beta=\mathbb{I}$ has finitely many solutions. Since $\mathbb{I}$ is an isolated point of  $\big(\mathbf{I}\mathbb{N}_{\infty}^{\boldsymbol{g}[j]},\tau\big)$, the separate continuity of the semigroup operation in $\big(\mathbf{I}\mathbb{N}_{\infty}^{\boldsymbol{g}[j]},\tau\big)$ and the above arguments imply that $\big(\mathbf{I}\mathbb{N}_{\infty}^{\boldsymbol{g}[j]},\tau\big)$ is the discrete space.
\end{proof}

The following proposition generalized results obtained for the bicyclic monoid in \cite{Eberhart-Selden-1969} and \cite{Gutik-2015}.

\begin{proposition}\label{proposition-2.3}
Let $j$ be any positive integer and $\mathbf{I}\mathbb{N}_{\infty}^{\boldsymbol{g}[j]}$ be a proper dense subsemigroup of a Hausdorff semitopological semigroup $S$. Then $I=S\setminus \mathbf{I}\mathbb{N}_{\infty}^{\boldsymbol{g}[j]}$  is a closed ideal of $S$.
\end{proposition}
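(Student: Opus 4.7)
The plan is to establish the statement in two stages: first that $I$ is closed in $S$, then that $I$ is an ideal.

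For closedness, I would start by noting that the subspace topology inherited by $\mathbf{I}\mathbb{N}_{\infty}^{\boldsymbol{g}[j]}$ from $S$ is Hausdorff and shift-continuous. Shift-continuity holds because, for every $\gamma\in\mathbf{I}\mathbb{N}_{\infty}^{\boldsymbol{g}[j]}$, the left and right translations of $S$ by $\gamma$ are continuous self-maps of $S$ that carry the subsemigroup $\mathbf{I}\mathbb{N}_{\infty}^{\boldsymbol{g}[j]}$ into itself. Theorem~\ref{theorem-2.2} then forces this subspace topology to be discrete. Hence for each $\gamma\in\mathbf{I}\mathbb{N}_{\infty}^{\boldsymbol{g}[j]}$ there is an open $U\subseteq S$ with $U\cap \mathbf{I}\mathbb{N}_{\infty}^{\boldsymbol{g}[j]}=\{\gamma\}$. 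Density makes $\{\gamma\}$ dense in $U$, while Hausdorffness makes $\{\gamma\}$ closed; together they yield $U=\{\gamma\}$. Thus every point of $\mathbf{I}\mathbb{N}_{\infty}^{\boldsymbol{g}[j]}$ is isolated in $S$, the subsemigroup $\mathbf{I}\mathbb{N}_{\infty}^{\boldsymbol{g}[j]}$ is open in $S$, and $I$ is closed.

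For the ideal property I would argue by contradiction; it is enough to prove $sx\in I$ for $s\in S$ and $x\in I$, the right-sided case being entirely symmetric. Assume $sx=\gamma\in \mathbf{I}\mathbb{N}_{\infty}^{\boldsymbol{g}[j]}$. Since $\{\gamma\}$ is open in $S$ and the right translation $y\mapsto yx$ is continuous on $S$, its preimage $\{y\in S\colon yx=\gamma\}$ is an open neighbourhood of $s$, so by density it contains some $t\in \mathbf{I}\mathbb{N}_{\infty}^{\boldsymbol{g}[j]}$ with $tx=\gamma$. Similarly $\{y\in S\colon ty=\gamma\}$ is an open neighbourhood $V$ of $x$. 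I claim $V\cap \mathbf{I}\mathbb{N}_{\infty}^{\boldsymbol{g}[j]}$ is infinite: if it were a finite set $F$, then $F$ would be closed in the Hausdorff space $S$, so $V\setminus F$ would be a nonempty open set containing $x$ and disjoint from $\mathbf{I}\mathbb{N}_{\infty}^{\boldsymbol{g}[j]}$, contradicting density. Thus infinitely many $y\in\mathbf{I}\mathbb{N}_{\infty}^{\boldsymbol{g}[j]}$ satisfy $ty=\gamma$, which contradicts the finite-to-one property of left translations in $\mathscr{I}_{\infty}^{\!\nearrow}(\mathbb{N})\supseteq \mathbf{I}\mathbb{N}_{\infty}^{\boldsymbol{g}[j]}$ already invoked in the proof of Theorem~\ref{theorem-2.2}.

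The main obstacle I expect is the step that upgrades the density hypothesis from ``every neighbourhood of $x$ meets $\mathbf{I}\mathbb{N}_{\infty}^{\boldsymbol{g}[j]}$'' to ``meets it in an infinite set'', since only the infinite version triggers the contradiction with the finite-to-one property of translations. The Hausdorffness of $S$ makes this upgrade routine via the finite-set-removal trick above, so no stronger separation axiom is required. Everything else is a straightforward assembly of separate continuity, the isolatedness of $\mathbf{I}\mathbb{N}_{\infty}^{\boldsymbol{g}[j]}$-points in $S$, and the finite-to-one combinatorics of $\mathbf{I}\mathbb{N}_{\infty}^{\boldsymbol{g}[j]}$ used in Theorem~\ref{theorem-2.2}.
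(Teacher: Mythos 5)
Your proof is correct and follows essentially the same route as the paper: discreteness from Theorem~\ref{theorem-2.2} makes $\mathbf{I}\mathbb{N}_{\infty}^{\boldsymbol{g}[j]}$ open in $S$ (hence $I$ closed), and the ideal property comes from separate continuity, the observation that every neighbourhood of a point of $I$ meets the dense subsemigroup in an infinite set, and the finite-to-one property of translations from Proposition~2.2 of \cite{Gutik-Repovs-2011}. The only cosmetic difference is that you first replace an arbitrary $s\in S$ by a semigroup element $t$ with $tx=\gamma$ before deriving the contradiction, whereas the paper treats the cases $s\in\mathbf{I}\mathbb{N}_{\infty}^{\boldsymbol{g}[j]}$ and $s\in I$ separately; both reductions rest on the same facts.
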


\begin{proof}
By Theorem~\ref{theorem-2.2}, $\mathbf{I}\mathbb{N}_{\infty}^{\boldsymbol{g}[j]}$ is a discrete subspace of $S$, and hence by Lemma~3 of \cite{Gutik-Savchuk-2017}, $\mathbf{I}\mathbb{N}_{\infty}^{\boldsymbol{g}[j]}$ is an open subspace of $S$.

Fix an arbitrary element $y\in I$. If $xy=z\notin I$ for some $x\in\mathbf{I}\mathbb{N}_{\infty}^{\boldsymbol{g}[j]}$ then there exists an open neighbourhood $U(y)$ of the point $y$ in the space $S$ such that $\{x\}\cdot U(y)=\{z\}\subset\mathbf{I}\mathbb{N}_{\infty}^{\boldsymbol{g}[j]}$. The neighbourhood $U(y)$ contains infinitely many elements of the semigroup $\mathbf{I}\mathbb{N}_{\infty}^{\boldsymbol{g}[j]}$. This contradicts Proposition~2.2 of \cite{Gutik-Repovs-2011}, which states that for each $v,w\in \mathbf{I}\mathbb{N}_{\infty}^{\boldsymbol{g}[j]}$ both sets $\big\{u\in \mathbf{I}\mathbb{N}_{\infty}^{\boldsymbol{g}[j]} \colon vu=w\big\}$ and $\big\{u\in \mathbf{I}\mathbb{N}_{\infty}^{\boldsymbol{g}[j]} \colon uv=w\big\}$ are finite. The obtained contradiction implies that $xy\in I$ for all $x\in  \mathbf{I}\mathbb{N}_{\infty}^{\boldsymbol{g}[j]}$ and $y\in I$. The proof of the statement that $yx\in I$ for all $x\in \mathbf{I}\mathbb{N}_{\infty}^{\boldsymbol{g}[j]}$ and $y\in I$ is similar.

Suppose to the contrary that $xy=w\notin I$ for some $x,y\in I$. Then $w\in \mathbf{I}\mathbb{N}_{\infty}^{\boldsymbol{g}[j]}$ and the separate continuity of the semigroup operation in $S$ implies that there exist open neighbourhoods $U(x)$ and $U(y)$ of the points $x$ and $y$ in $S$, respectively, such that $\{x\}\cdot U(y)=\{w\}$ and $U(x)\cdot \{y\}=\{w\}$. Since both neighbourhoods $U(x)$ and $U(y)$ contain infinitely many elements of the semigroup $\mathbf{I}\mathbb{N}_{\infty}^{\boldsymbol{g}[j]}$, both equalities $\{x\}\cdot U(y)=\{w\}$ and $U(x)\cdot \{y\}=\{w\}$ contradict mentioned above Proposition~2.2 from \cite{Gutik-Repovs-2011}. The obtained contradiction implies that $xy\in I$.
\end{proof}

\begin{lemma}\label{lemma-2.4}
Let $j$ be any positive integer $\geqslant 2$. Then the element
$
  \varepsilon\cdot (\beta\varepsilon)^j\cdot\alpha^j
$
is an idempotent of the submonoid $\mathscr{C}_{\mathbb{N}}$ for any idempotent $\varepsilon$ of the monoid $\mathbf{I}\mathbb{N}_{\infty}^{\boldsymbol{g}[j]}$.
\end{lemma}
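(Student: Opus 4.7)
The plan is to compute $\varepsilon\cdot(\beta\varepsilon)^{j}\cdot\alpha^{j}$ explicitly and exhibit it as the identity map on some upper set $[k,\infty)$ of $\mathbb{N}$, i.e.\ as the idempotent $\beta^{k-1}\alpha^{k-1}$ of $\mathscr{C}_{\mathbb{N}}$. Since $\varepsilon$ is an idempotent and by Lemma~1 of \cite{Gutik-Savchuk-2018} every element of $\mathbf{I}\mathbb{N}_{\infty}$ is a partial shift of $\mathbb{N}$, the map $\varepsilon$ is the identity on its domain $D=\operatorname{dom}\varepsilon$. Writing $m=n_{\varepsilon}^{\mathbf{d}}$ and $\underline{m}=\underline{n}_{\varepsilon}^{\mathbf{d}}$, I would record the structural fact $D=D_{0}\cup[m,\infty)$ where the ``noise'' set $D_{0}\subseteq[\underline{m},m-1]$ satisfies the crucial bound $m-\underline{m}\leqslant j$ coming from the membership $\varepsilon\in\mathbf{I}\mathbb{N}_{\infty}^{\boldsymbol{g}[j]}$.

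Next, since $\beta$ is the shift $n\mapsto n-1$ with $\operatorname{dom}\beta=\mathbb{N}\setminus\{1\}$ and $\varepsilon$ is the identity on $D$, a straightforward induction on the exponent gives $(n)(\beta\varepsilon)^{j}=n-j$ with domain $\{n\geqslant j+1\colon n-1,\ldots,n-j\in D\}$. Prepending $\varepsilon$ adds the condition $n\in D$, and postpending $\alpha^{j}$ reverses the shift, so $\varepsilon\cdot(\beta\varepsilon)^{j}\cdot\alpha^{j}$ is the identity map on
\begin{equation*}
T=\bigl\{n\geqslant j+1\colon\{n-j,n-j+1,\ldots,n\}\subseteq D\bigr\}.
\end{equation*}

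The main step, and the main obstacle, is then to show that $T$ is an upper set of $\mathbb{N}$. The key intermediate claim is that $n\in T$ forces $n\geqslant m$: if instead $n\leqslant m-1$, then the window $\{n-j,\ldots,n\}$ of $j+1$ consecutive integers lies entirely in $[1,m-1]$, hence inside $D_{0}\subseteq[\underline{m},m-1]$, so $n-j\geqslant\underline{m}$; but then $n\geqslant\underline{m}+j\geqslant m$ by the bound $m-\underline{m}\leqslant j$, contradicting $n\leqslant m-1$. Granted this, upward-closure is immediate: for $n\in T$ the window of $n+1$ is obtained from that of $n$ by deleting $n-j$ and adjoining $n+1\geqslant m+1$, and the latter lies in $[m,\infty)\subseteq D$. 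Since trivially $[m+j,\infty)\subseteq T$, the set $T$ is nonempty, so $T=[k,\infty)$ for $k=\min T$, and therefore $\varepsilon\cdot(\beta\varepsilon)^{j}\cdot\alpha^{j}=\beta^{k-1}\alpha^{k-1}\in\mathscr{C}_{\mathbb{N}}$, as required.
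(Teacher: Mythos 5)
Your proof is correct and follows essentially the same route as the paper's: both compute $\varepsilon\cdot(\beta\varepsilon)^{j}\cdot\alpha^{j}$ as the identity map on $\bigcap_{i=0}^{j}(\operatorname{dom}\varepsilon+i)$ and then use the noise bound $n_{\varepsilon}^{\mathbf{d}}-\underline{n}_{\varepsilon}^{\mathbf{d}}\leqslant j$ to see that this domain is a final segment of $\mathbb{N}$, hence an idempotent of $\mathscr{C}_{\mathbb{N}}$. Your window argument makes the role of that bound explicit, whereas the paper reaches the same conclusion by factoring the product through the idempotents $\beta^{k}\varepsilon\alpha^{k}$ and tracking $n^{\mathbf{d}}$ and $\underline{n}^{\mathbf{d}}$, pinning the domain down as exactly $\bigl[n_{\varepsilon}^{\mathbf{d}}+j,\infty\bigr)$ --- extra precision that is not needed for the statement.
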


\begin{proof}
Since $\mathbb{I}=\alpha\beta$, we have that
\begin{equation*}
  \varepsilon\cdot\beta\varepsilon\alpha\cdot \beta^2\varepsilon\alpha^2\cdot\ldots\cdot \beta^j\varepsilon\alpha^j= \varepsilon\cdot (\mathbb{I}\beta\varepsilon)^j\cdot\alpha^j=\varepsilon\cdot (\beta\varepsilon)^j\cdot\alpha^j
\end{equation*}
and
\begin{equation*}
  \beta^k\varepsilon\alpha^k\cdot \beta^k\varepsilon\alpha^k=\beta^k\varepsilon\mathbb{I}\varepsilon\alpha^k=\beta^k\varepsilon\varepsilon\alpha^k=\beta^k\varepsilon\alpha^k,
\end{equation*}
for any positive integer $k$. Also, $\varepsilon(\beta\varepsilon)^j\alpha^j$ is an idempotent of $\mathbf{I}\mathbb{N}_{\infty}^{\boldsymbol{g}[j]}$, because $\mathbf{I}\mathbb{N}_{\infty}^{\boldsymbol{g}[j]}$ is an inverse semigroup
and the product of idempotents in an inverse semigroup is an idempotent as well.

By definitions of the partial transformations $\alpha$ and $\beta$ and the above part of the proof we get that
\begin{equation}\label{eq-2.1}
  n_{\beta^k\varepsilon\alpha^k}^{\mathbf{d}}=n_{\varepsilon}^{\mathbf{d}}+k \qquad \hbox{and} \qquad
  \underline{n}_{\beta^k\varepsilon\alpha^k}^{\mathbf{d}}=\underline{n}_{\varepsilon}^{\mathbf{d}}+k,
\end{equation}
and hence
\begin{equation}\label{eq-2.2}
  n_{\beta^k\varepsilon\alpha^k}^{\mathbf{d}}-\underline{n}_{\beta^k\varepsilon\alpha^k}^{\mathbf{d}}=
  n_{\varepsilon}^{\mathbf{d}}-\underline{n}_{\varepsilon}^{\mathbf{d}},
\end{equation}
for any positive integer $k$. Then equalities  \eqref{eq-2.1} and \eqref{eq-2.2} imply that for any $k=1,\ldots,j$ the idempotent
\begin{equation*}
\varepsilon_k=\varepsilon(\beta\varepsilon)^k\alpha^k
\end{equation*}
has the following properties:
\begin{equation*}
  n_{\varepsilon_k}^{\mathbf{d}}=n_{\beta^k\varepsilon\alpha^k}^{\mathbf{d}}, \quad \underline{n}_{\varepsilon_k}^{\mathbf{d}}=\underline{n}_{\beta^k\varepsilon\alpha^k}^{\mathbf{d}},
\end{equation*}
and
\begin{equation*}
    1,\ldots,\underline{n}_{\varepsilon}^{\mathbf{d}},\ldots,\underline{n}_{\varepsilon}^{\mathbf{d}}+k-1, n_{\varepsilon}^{\mathbf{d}}-1,n_{\varepsilon}^{\mathbf{d}},\ldots,n_{\varepsilon}^{\mathbf{d}}+k-1\notin\operatorname{dom}\varepsilon_k.
\end{equation*}
Hence we get that $\varepsilon_j$ is the identity map of $\left[n_{\varepsilon}^{\mathbf{d}}+j\right)$, which implies the statement of the lemma. \end{proof}

\begin{lemma}\label{lemma-2.5}
Let $j$ be any positive integer and $\mathbf{I}\mathbb{N}_{\infty}^{\boldsymbol{g}[j]}$ be a proper dense subsemigroup of a Hausdorff topological inverse semigroup $S$. Then there exists an idempotent $e\in S\setminus \mathbf{I}\mathbb{N}_{\infty}^{\boldsymbol{g}[j]}$ such that $V(e)\cap E(\mathscr{C}_\mathbb{N})$ is an infinite subset for any open neighbourhood $V(e)$ of $e$ in $S$.
\end{lemma}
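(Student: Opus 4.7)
The plan is to start with any idempotent of the closed ideal $I:=S\setminus \mathbf{I}\mathbb{N}_{\infty}^{\boldsymbol{g}[j]}$, approximate it by idempotents of $\mathbf{I}\mathbb{N}_{\infty}^{\boldsymbol{g}[j]}$, and then push the whole picture through the continuous map $s\mapsto s(\beta s)^j\alpha^j$, which by Lemma~\ref{lemma-2.4} sends idempotents of $\mathbf{I}\mathbb{N}_{\infty}^{\boldsymbol{g}[j]}$ into $E(\mathscr{C}_{\mathbb{N}})$. The desired idempotent $e$ will be the limit of the pushed-through images.

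Concretely, Proposition~\ref{proposition-2.3} gives that $I$ is a nonempty closed ideal of $S$. I would pick any $x\in I$ and let $f:=xx^{-1}$, an idempotent of $S$ forced to lie in $I$ because $I$ is an ideal. By density there is a net $(\gamma_i)\subset\mathbf{I}\mathbb{N}_{\infty}^{\boldsymbol{g}[j]}$ with $\gamma_i\to f$; continuity of inversion and multiplication then yields $\varepsilon_i:=\gamma_i\gamma_i^{-1}\to f$, so one gets an approximating net of idempotents $\varepsilon_i\in E(\mathbf{I}\mathbb{N}_{\infty}^{\boldsymbol{g}[j]})$. Setting $\eta_i:=\varepsilon_i(\beta\varepsilon_i)^j\alpha^j$ and $e:=f(\beta f)^j\alpha^j$, Lemma~\ref{lemma-2.4} yields $\eta_i\in E(\mathscr{C}_{\mathbb{N}})$; continuity of the semigroup operation yields $\eta_i\to e$; passing to the limit in $\eta_i^2=\eta_i$ yields $e^2=e$; and the ideal property of $I$ forces $e\in I$, so $e\notin\mathbf{I}\mathbb{N}_{\infty}^{\boldsymbol{g}[j]}$.

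It remains to see that infinitely many of the $\eta_i$'s are pairwise distinct so that they genuinely accumulate at $e$. The key observation is that for each $N\geqslant 1$ the set $\big\{\varepsilon\in E(\mathbf{I}\mathbb{N}_{\infty}^{\boldsymbol{g}[j]})\colon n^{\mathbf{d}}_\varepsilon\leqslant N\big\}$ is finite, since once $[N,\infty)\subseteq\operatorname{dom}\varepsilon$ only finitely many completions on $\{1,\dots,N-1\}$ are possible. By Theorem~\ref{theorem-2.2}, $\mathbf{I}\mathbb{N}_{\infty}^{\boldsymbol{g}[j]}$ is discrete in $S$, so any net lying in a finite subset can converge only to a point of that subset; since $f\notin\mathbf{I}\mathbb{N}_{\infty}^{\boldsymbol{g}[j]}$, the net $(\varepsilon_i)$ escapes every such finite set cofinally, and $n^{\mathbf{d}}_{\varepsilon_i}$ is cofinally unbounded. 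From the proof of Lemma~\ref{lemma-2.4}, $\eta_i$ is the identity map of $\big[n^{\mathbf{d}}_{\varepsilon_i}+j\big)$, so distinct values of $n^{\mathbf{d}}_{\varepsilon_i}$ produce distinct $\eta_i$'s. Combined with $\eta_i\to e$, every open neighbourhood of $e$ eventually contains cofinitely many $\eta_i$'s, and therefore infinitely many distinct elements of $E(\mathscr{C}_{\mathbb{N}})$, as required. The one place I expect to need care is this cardinality argument: it must be phrased in the language of nets (not sequences, since $S$ is only assumed Hausdorff) and it must deliver distinctness of the \emph{images} $\eta_i$, for which the explicit identification of $\eta_i$ with the identity on $\big[n^{\mathbf{d}}_{\varepsilon_i}+j\big)$ obtained inside the proof of Lemma~\ref{lemma-2.4} is essential.
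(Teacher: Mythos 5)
Your proof is correct and follows essentially the same route as the paper's: both start from an idempotent $f$ of the ideal $S\setminus\mathbf{I}\mathbb{N}_{\infty}^{\boldsymbol{g}[j]}$, set $e=f(\beta f)^j\alpha^j$, and push idempotents of $\mathbf{I}\mathbb{N}_{\infty}^{\boldsymbol{g}[j]}$ accumulating at $f$ through the map of Lemma~\ref{lemma-2.4}, using the finiteness of $\big\{\varepsilon\in E\big(\mathbf{I}\mathbb{N}_{\infty}^{\boldsymbol{g}[j]}\big)\colon n_{\varepsilon}^{\mathbf{d}}\leqslant N\big\}$ to extract infinitely many distinct images in $E(\mathscr{C}_{\mathbb{N}})\cap V(e)$. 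The only deviation is that you manufacture the approximating idempotents as $\gamma_i\gamma_i^{-1}$ from a net supplied by density and continuity of inversion, whereas the paper instead invokes Proposition~II.3 of \cite{Eberhart-Selden-1969} to conclude that every neighbourhood of $f$ already meets $E\big(\mathbf{I}\mathbb{N}_{\infty}^{\boldsymbol{g}[j]}\big)$ in an infinite set; both devices are sound.
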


\begin{proof}
By Proposition~\ref{proposition-2.3}, $S\setminus \mathbf{I}\mathbb{N}_{\infty}^{\boldsymbol{g}[j]}$ is an ideal of $S$. Since $S$ is an inverse semigroup, $S\setminus \mathbf{I}\mathbb{N}_{\infty}^{\boldsymbol{g}[j]}$ contains an idempotent.

Put $f$ be an arbitrary idempotent of $S\setminus \mathbf{I}\mathbb{N}_{\infty}^{\boldsymbol{g}[j]}$. Since the unit element of a Hausdorff topological monoid is again the unit element of its closure in a topological semigroup, for an arbitrary positive integer $k$ by Proposition~\ref{proposition-2.3} we have that
\begin{equation*}
\beta^kf\alpha^k\cdot \beta^kf\alpha^k=\beta^kf\mathbb{I} f\alpha^k=\beta^kff\alpha^k=\beta^kf\alpha^k,
\end{equation*}
and hence $\beta^kf\alpha^k\in E(S)\setminus E\big(\mathbf{I}\mathbb{N}_{\infty}^{\boldsymbol{g}[j]}\big)$. This implies that  $e=f\cdot \beta f\alpha\cdot\ldots\cdot\beta^j f\alpha^j$ is an idempotent in $S$ because $S$ is an inverse semigroup. The continuity of the semigroup operation in $S$ implies that for every open neighbourhood $V(e)$ of the point $e$ in $S$ there exists an open neighbourhood $W(f)$ of the point $f$ in $S$ such that
\begin{equation*}
  W(f)\cdot \beta\cdot W(f)\alpha\cdot\ldots\cdot\beta^j\cdot W(f)\cdot\alpha^j\subseteq V(e).
\end{equation*}
By Proposition~II.3 of \cite{Eberhart-Selden-1969} the set $W(f)\cap E\big(\mathbf{I}\mathbb{N}_{\infty}^{\boldsymbol{g}[j]}\big)$ is infinite. Since for any positive integer $n_0$ there exist finitely many idempotents $\varepsilon\in \mathbf{I}\mathbb{N}_{\infty}^{\boldsymbol{g}[j]}$ such that $n_{\varepsilon}^{\mathbf{d}}=n_0$, we conclude that the set
$ 
  \big\{n_{\varepsilon}^{\mathbf{d}}\colon \varepsilon\in W(f)\cap E\big(\mathbf{I}\mathbb{N}_{\infty}^{\boldsymbol{g}[j]}\big)\big\}
$ 
is infinite, too. Then there exists an infinite sequence $\left\{\varphi_i\right\}_{i\in\mathbb{N}}$ of idempotents of $W(f)\cap E\big(\mathbf{I}\mathbb{N}_{\infty}^{\boldsymbol{g}[j]}\big)$ such that $n_{\varphi_{i_1}}^{\mathbf{d}}\neq n_{\varphi_{i_2}}^{\mathbf{d}}$ for any distinct positive integers $i_1$ and  $i_2$. Lemma~\ref{lemma-2.5} implies that
$
  \varphi_i\cdot (\beta\varphi_i)^j\cdot\alpha^j
$
is an idempotent of the submonoid $\mathscr{C}_{\mathbb{N}}$ which belongs to $V(e)$ for any positive integer $i$. Since the set
$\big\{n_{\varepsilon}^{\mathbf{d}}\colon \varepsilon\in W(f)\cap E\big(\mathbf{I}\mathbb{N}_{\infty}^{\boldsymbol{g}[j]}\big)\big\}$ is infinite, the set $V(e)\cap E(\mathscr{C}_\mathbb{N})$ is infinite, too.
\end{proof}

\begin{theorem}\label{theorem-2.6}
Let $j$ be any positive integer and $\mathbf{I}\mathbb{N}_{\infty}^{\boldsymbol{g}[j]}$ be a proper dense subsemigroup of a Hausdorff topological inverse semigroup $S$. Then $I=S\setminus \mathbf{I}\mathbb{N}_{\infty}^{\boldsymbol{g}[j]}$  is a topological group.
\end{theorem}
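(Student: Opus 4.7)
The plan is to reduce the theorem to showing that $I$ has exactly one idempotent. Proposition~\ref{proposition-2.3} already gives that $I$ is a closed ideal of $S$; since $\mathbf{I}\mathbb{N}_{\infty}^{\boldsymbol{g}[j]}$ is closed under inversion, so is its complement $I$, and hence $I$ is a closed topological inverse subsemigroup of $S$. An inverse semigroup whose band is a singleton is a group, and the group operations inherited from the topological inverse semigroup $S$ are then continuous; so the entire content of the theorem is the equality $|E(I)|=1$.

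First I extract a canonical idempotent $e\in E(I)$ and pin it below every idempotent of the dense subsemigroup. Lemma~\ref{lemma-2.5} supplies $e\in E(I)$ whose every neighbourhood meets $E(\mathscr{C}_{\mathbb{N}})=\{\beta^{n}\alpha^{n}:n\geqslant 0\}$ infinitely often; since $\mathbf{I}\mathbb{N}_{\infty}^{\boldsymbol{g}[j]}$ is discrete in $S$ by Theorem~\ref{theorem-2.2}, each $\beta^{n}\alpha^{n}$ is isolated in $S$, so I can pick a net $\{\beta^{n_\sigma}\alpha^{n_\sigma}\}_\sigma$ with $n_\sigma\to\infty$ converging to $e$. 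Passing the identity $\beta^{m}\alpha^{m}\cdot\beta^{n_\sigma}\alpha^{n_\sigma}=\beta^{n_\sigma}\alpha^{n_\sigma}$ (valid once $n_\sigma\geqslant m$) through the joint continuity of multiplication gives $\beta^{m}\alpha^{m}\cdot e=e$ for every $m$. For an arbitrary $\varepsilon\in E(\mathbf{I}\mathbb{N}_{\infty}^{\boldsymbol{g}[j]})$, choosing $m$ so that $[m+1)\subseteq\operatorname{dom}\varepsilon$ yields $\varepsilon\cdot\beta^{m}\alpha^{m}=\beta^{m}\alpha^{m}$, hence $\varepsilon\cdot e=e$. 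Thus $e\preccurlyeq\varepsilon$ for every $\varepsilon\in E(\mathbf{I}\mathbb{N}_{\infty}^{\boldsymbol{g}[j]})$.

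For uniqueness, fix any $g\in E(I)$. Approximating $g$ by a net $\{\gamma_\lambda\}\subset\mathbf{I}\mathbb{N}_{\infty}^{\boldsymbol{g}[j]}$ and replacing it by $\{\gamma_\lambda\gamma_\lambda^{-1}\}$ (using continuity of inversion), I may assume $g=\lim_\lambda\varepsilon_\lambda$ with $\varepsilon_\lambda\in E(\mathbf{I}\mathbb{N}_{\infty}^{\boldsymbol{g}[j]})$. For each $N$ the set of idempotents of $\mathbf{I}\mathbb{N}_{\infty}^{\boldsymbol{g}[j]}$ with $\underline{n}_{\varepsilon}^{\mathbf{d}}\leqslant N$ is finite (at most $(N+j)\cdot 2^{j}$ configurations, because the sporadic part of the domain below $n_{\varepsilon}^{\mathbf{d}}$ lies in an interval of length at most $j$), so the discreteness of $\mathbf{I}\mathbb{N}_{\infty}^{\boldsymbol{g}[j]}$ and Hausdorffness of $S$ force $\underline{n}_{\varepsilon_\lambda}^{\mathbf{d}}\to\infty$. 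The previous paragraph gives $\varepsilon_\lambda\cdot e=e$ for every $\lambda$, hence $g\cdot e=e$ in the limit, i.e.\ $e\preccurlyeq g$. Conversely, for any fixed $m$, once $\underline{n}_{\varepsilon_\lambda}^{\mathbf{d}}>m$ we have $\operatorname{dom}\varepsilon_\lambda\subseteq[m+1)=\operatorname{dom}(\beta^{m}\alpha^{m})$, so $\varepsilon_\lambda\cdot\beta^{m}\alpha^{m}=\varepsilon_\lambda$; passing to the limit, $g\cdot\beta^{m}\alpha^{m}=g$ for every $m$, and therefore $g\cdot e=\lim_\sigma g\cdot\beta^{n_\sigma}\alpha^{n_\sigma}=g$, which is $g\preccurlyeq e$. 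Combining $e\preccurlyeq g$ and $g\preccurlyeq e$ gives $g=e$, so $E(I)=\{e\}$ and the theorem follows.

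The main obstacle is the divergence $\underline{n}_{\varepsilon_\lambda}^{\mathbf{d}}\to\infty$ for any idempotent-net approximating $g\in E(I)$: this is where Theorem~\ref{theorem-2.2} and the Hausdorff hypothesis are really used, and it is what makes it possible to obtain both natural-order comparisons $e\preccurlyeq g$ and $g\preccurlyeq e$ by a single limiting procedure applied to the approximating net $\{\varepsilon_\lambda\}$ and to the defining net $\{\beta^{n_\sigma}\alpha^{n_\sigma}\}$ of $e$.
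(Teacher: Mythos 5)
Your proof is correct, and its core step is genuinely different from the paper's. Both arguments share the same skeleton: reduce the theorem to the claim that $I$ contains exactly one idempotent (the final passage from a unique idempotent to a topological group is identical), and both start from the idempotent $e$ supplied by Lemma~\ref{lemma-2.5} together with Proposition~\ref{proposition-2.3}. But where the paper proves uniqueness by contradiction --- assuming two distinct idempotents $e\neq f$ in $I$, forming $h=ef$, choosing disjoint neighbourhoods $W(e)$, $W(f)$, $U(h)$, and producing idempotents $\iota_e\in\iota_e\cdot W(f)$ and $\iota_f\in\iota_f\cdot W(e)$ that violate the disjointness --- you argue directly through the natural partial order: realizing $e$ as a limit of $\beta^{n_\sigma}\alpha^{n_\sigma}$ with $n_\sigma\to\infty$ gives $\varepsilon\cdot e=e$ for every $\varepsilon\in E\big(\mathbf{I}\mathbb{N}_{\infty}^{\boldsymbol{g}[j]}\big)$, and realizing an arbitrary $g\in E(I)$ as a limit of monoid idempotents $\varepsilon_\lambda$ with $\underline{n}_{\varepsilon_\lambda}^{\mathbf{d}}\to\infty$ yields both $g\cdot e=e$ and $g\cdot e=g$, whence $g=e$. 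Your route buys several things: it makes explicit where continuity of inversion is used (to replace $\gamma_\lambda$ by $\gamma_\lambda\gamma_\lambda^{-1}$), where the bound $n_{\varepsilon}^{\mathbf{d}}-\underline{n}_{\varepsilon}^{\mathbf{d}}\leqslant j$ enters (finiteness of the set of idempotents with bounded $\underline{n}_{\varepsilon}^{\mathbf{d}}$, which forces the divergence), and it avoids the paper's neighbourhood bookkeeping, whose key existence claim for $\iota_e$ and $\iota_f$ is asserted rather than verified. The paper's version is shorter on the page and does not need the explicit net machinery, but your two limit identities $g\cdot e=e$ and $g\cdot e=g$ give a cleaner and more self-contained justification of the same fact.
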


\begin{proof}
We claim that the ideal $I$ contains a unique idempotent.

Suppose to the contrary that $I$ has at least two distinct idempotent $e$ and $f$. By Lemma~\ref{lemma-2.5} without loss of generality we may assume that the set $V(e)\cap E(\mathscr{C}_\mathbb{N})$ is infinite for any open neighbourhood $V(e)$ of $e$ in $S$. Since $S$ is an inverse semigroup $ef=fe=h$ for some $h\in I\cap E(S)$. Fix an arbitrary open neighbourhood $U(h)$ in $S$. Then there exist disjoint open neighbourhoods $W(e)$ and $W(f)$ of the points $e$ and $f$ in $S$, respectively, such that $W(e)\cdot W(f)\subseteq U(h)$.
Since $S$ is Hausdorff, we can additionally assume that $W(e)\cap U(h)=\varnothing$ if $e\neq h$ and $W(f)\cap U(h)=\varnothing$ if $f\neq h$. Since $e\neq f$ we conclude that $W(e)\cap U(h)=\varnothing$ or $W(f)\cap U(h)=\varnothing$.
Since the set $W(f)\cap E\big(\mathbf{I}\mathbb{N}_{\infty}^{\boldsymbol{g}[j]}\big)$ is infinite and for any positive integer $n_0$ there exist finitely many idempotents $\iota\in \mathbf{I}\mathbb{N}_{\infty}^{\boldsymbol{g}[j]}$ such that $n_{\iota}^{\mathbf{d}}=n_0$, we conclude that the set
$
  \big\{\underline{n}_{\iota}^{\mathbf{d}}\colon \iota\in W(f)\cap E\big(\mathbf{I}\mathbb{N}_{\infty}^{\boldsymbol{g}[j]}\big)\big\}
$
is infinite as well. Also, the choice of the neighbourhood $W(e)$ implies that the set
$
  \big\{\underline{n}_{\iota}^{\mathbf{d}}={n}_{\iota}^{\mathbf{d}}\colon \iota\in W(e)\cap E\big(\mathscr{C}_\mathbb{N}\big)\big\}
$
is infinite, too. Then the semigroup operation in $\mathbf{I}\mathbb{N}_{\infty}^{\boldsymbol{g}[j]}$ implies that there exist idempotents $\iota_e\in W(e)$ and $\iota_f\in W(f)$ such that $\iota_e\in \iota_e\cdot W(f)$ and $\iota_f\in \iota_f\cdot W(e)$,
which implies $W(e)\cap U(h)\neq\varnothing\neq W(f)\cap U(h)$. But this contradicts the choice of the neighbourhoods $W(e)$, $W(f)$, $U(h)$.

Since $S$ is an inverse semigroup, we have that $xx^{-1}=x^{-1}x=e$ for any $x\in I$. This implies that $I$ is a subgroup of $S$ with the unit element $e$. Also, the continuity of semigroup operation and the inversion in $S$ implies that $I$ is a topological group with the induced topology from $S$.
\end{proof}

Lemma \ref{lemma-2.7} follows from the definition of an element $\overrightarrow{\gamma}$ for an arbitrary $\gamma\in \mathscr{I}_{\infty}^{\,\Rsh\!\!\!\nearrow}(\mathbb{N})$.

\begin{lemma}\label{lemma-2.7}
For any $\gamma\in \mathscr{I}_{\infty}^{\,\Rsh\!\!\!\nearrow}(\mathbb{N})$ the following statements hold:
\begin{itemize}
  \item[$(i)$] $\overrightarrow{\gamma}\in \mathscr{C}_\mathbb{N}$;
  \item[$(ii)$] ${\overrightarrow{\gamma}}^{-1}=\overrightarrow{\gamma^{-1}}$;
  \item[$(iii)$] $\gamma{\overrightarrow{\gamma}}^{-1}=\overrightarrow{\gamma}{\overrightarrow{\gamma}}^{-1}$;
  \item[$(iv)$] ${\overrightarrow{\gamma}}^{-1}\gamma={\overrightarrow{\gamma}}^{-1}\overrightarrow{\gamma}$.
\end{itemize}
\end{lemma}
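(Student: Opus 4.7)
The plan is to obtain the whole lemma by directly unpacking the definition of $\overrightarrow{\gamma}$, treating each clause as a short verification in the symmetric inverse monoid $\mathscr{I}_\omega$. Part $(i)$ is literally the definition: $\overrightarrow{\gamma}=\gamma|_{[n_{\gamma}^{\mathbf{d}})}$ was singled out precisely because its restriction lies in $\mathscr{C}_{\mathbb{N}}$, so nothing is required. For $(ii)$ I would first observe that $\overrightarrow{\gamma}$ is a pure shift bijection from $[n_{\gamma}^{\mathbf{d}})$ onto $[n_{\gamma}^{\mathbf{r}})$ (where $n_{\gamma}^{\mathbf{r}}=(n_{\gamma}^{\mathbf{d}})\gamma$), so the partial-map inverse $\overrightarrow{\gamma}^{-1}$ is the opposite shift from $[n_{\gamma}^{\mathbf{r}})$ back to $[n_{\gamma}^{\mathbf{d}})$. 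It then remains to identify this with $\overrightarrow{\gamma^{-1}}=\gamma^{-1}|_{[n_{\gamma^{-1}}^{\mathbf{d}})}$, which reduces to the equality $n_{\gamma^{-1}}^{\mathbf{d}}=n_{\gamma}^{\mathbf{r}}$.

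For the pointwise identities $(iii)$ and $(iv)$ I would compute both sides directly. In the composition $\gamma\overrightarrow{\gamma}^{-1}$, the domain is $\{x\in\operatorname{dom}\gamma:(x)\gamma\in\operatorname{ran}\overrightarrow{\gamma}\}$; because $\overrightarrow{\gamma}$ is itself a restriction of $\gamma$ and $\gamma$ is injective, $(x)\gamma\in\operatorname{ran}\overrightarrow{\gamma}$ forces $x\in[n_{\gamma}^{\mathbf{d}})=\operatorname{dom}\overrightarrow{\gamma}$, on which $\gamma$ and $\overrightarrow{\gamma}$ agree. Hence $\gamma\overrightarrow{\gamma}^{-1}$ is the identity on $\operatorname{dom}\overrightarrow{\gamma}$, and so is $\overrightarrow{\gamma}\overrightarrow{\gamma}^{-1}$, giving $(iii)$. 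Part $(iv)$ is symmetric: since $\operatorname{ran}\overrightarrow{\gamma}^{-1}=\operatorname{dom}\overrightarrow{\gamma}=[n_{\gamma}^{\mathbf{d}})\subseteq\operatorname{dom}\gamma$, the domain of $\overrightarrow{\gamma}^{-1}\gamma$ equals $\operatorname{ran}\overrightarrow{\gamma}$, and on that set $\gamma$ acts as $\overrightarrow{\gamma}$, so the composition is the identity on $\operatorname{ran}\overrightarrow{\gamma}$, which is exactly $\overrightarrow{\gamma}^{-1}\overrightarrow{\gamma}$.

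The only step that is not pure definition-chasing is the identity $n_{\gamma^{-1}}^{\mathbf{d}}=n_{\gamma}^{\mathbf{r}}$ used in $(ii)$, and this is the step I expect to require the most care. I would establish it by a double inequality: the shift $\overrightarrow{\gamma}^{-1}$ witnesses that $\gamma^{-1}$ is a member of $\mathscr{C}_{\mathbb{N}}$ when restricted to $[n_{\gamma}^{\mathbf{r}})$, giving $n_{\gamma^{-1}}^{\mathbf{d}}\leqslant n_{\gamma}^{\mathbf{r}}$ by minimality; conversely, the image under $\gamma^{-1}$ of $[n_{\gamma^{-1}}^{\mathbf{d}})$ is a shift inside $\operatorname{dom}\gamma$, and by the minimality of $n_{\gamma}^{\mathbf{d}}$ together with the fact that $\gamma$ is a bijection between the two tails, the reverse inequality follows. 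With that single numerical identification in place, all four statements reduce to mechanical verifications from the definitions.
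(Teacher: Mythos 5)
Your proof is correct and takes essentially the same route as the paper, which offers no argument beyond noting that the lemma ``follows from the definition of $\overrightarrow{\gamma}$''; your write-up is exactly that definitional verification, spelled out. You also correctly isolate the one point that genuinely needs an argument, namely $n_{\gamma^{-1}}^{\mathbf{d}}=n_{\gamma}^{\mathbf{r}}$, and your double-inequality via minimality and the monotonicity of the tail shift settles it.
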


\begin{proposition}\label{proposition-2.8}
Let $j$ be any positive integer and $\mathbf{I}\mathbb{N}_{\infty}^{\boldsymbol{g}[j]}$ be a proper dense subsemigroup of a Hausdorff topological inverse semigroup $S$. Then the unique idempotent of $S\setminus\mathbf{I}\mathbb{N}_{\infty}^{\boldsymbol{g}[j]}$ commutes with all elements of the semigroup $\mathbf{I}\mathbb{N}_{\infty}^{\boldsymbol{g}[j]}$.
\end{proposition}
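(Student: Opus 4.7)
The plan is to first show that the unique idempotent $e$ of the ideal $I=S\setminus\mathbf{I}\mathbb{N}_{\infty}^{\boldsymbol{g}[j]}$ (whose existence and uniqueness are guaranteed by Theorem~\ref{theorem-2.6}, asserting that $I$ is a topological group) commutes with the two generators $\alpha,\beta$ of the submonoid $\mathscr{C}_{\mathbb{N}}$, and then to extend this to all of $\mathbf{I}\mathbb{N}_{\infty}^{\boldsymbol{g}[j]}$ via the factorization $\gamma=(\gamma\gamma^{-1})\cdot\sigma_{k}$, in which $k$ is the integer shift of $\gamma$ (Lemma~1 of~\cite{Gutik-Savchuk-2018}), $\sigma_{k}$ is the full shift by $k$ in $\mathscr{C}_{\mathbb{N}}$ ($\alpha^{k}$ if $k\geqslant 0$, else $\beta^{-k}$), and $\gamma\gamma^{-1}$ is an idempotent of $\mathbf{I}\mathbb{N}_{\infty}^{\boldsymbol{g}[j]}$.

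The crucial identity is $\alpha e\beta=e$. To prove it, note that $\beta\alpha$ is an idempotent of $\mathbf{I}\mathbb{N}_{\infty}^{\boldsymbol{g}[j]}$ (the identity of $[2)$), so the fact that the idempotents of the topological inverse semigroup $S$ form a commutative subsemigroup gives $e(\beta\alpha)=(\beta\alpha)e$. Combined with $\alpha\beta=\mathbb{I}$, this yields
\begin{equation*}
(\alpha e\beta)^{2}=\alpha e(\beta\alpha)e\beta=\alpha(\beta\alpha)e^{2}\beta=(\alpha\beta\alpha)e\beta=\alpha e\beta,
\end{equation*}
so $\alpha e\beta\in E(S)$. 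By Proposition~\ref{proposition-2.3} the set $I$ is an ideal of $S$, so $\alpha e\beta\in I$; uniqueness of the idempotent in the group $I$ then forces $\alpha e\beta=e$.

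Right-multiplying $\alpha e\beta=e$ by $\alpha$ and reusing $e(\beta\alpha)=(\beta\alpha)e$ yields $e\alpha=\alpha e(\beta\alpha)=\alpha(\beta\alpha)e=(\alpha\beta\alpha)e=\alpha e$, while the symmetric left-multiplication by $\beta$ gives $e\beta=\beta e$. Thus $e$ commutes with every element of $\mathscr{C}_{\mathbb{N}}$, and, since inverse-semigroup idempotents commute, also with every idempotent of $\mathbf{I}\mathbb{N}_{\infty}^{\boldsymbol{g}[j]}$. A short verification using the inclusion $\operatorname{dom}\gamma\subseteq\operatorname{dom}\sigma_{k}$ confirms the factorization $\gamma=(\gamma\gamma^{-1})\sigma_{k}$, and then
\begin{equation*}
e\gamma=e(\gamma\gamma^{-1})\sigma_{k}=(\gamma\gamma^{-1})e\sigma_{k}=(\gamma\gamma^{-1})\sigma_{k}e=\gamma e.
\end{equation*}

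The main hurdle is psychological rather than computational. After Lemma~\ref{lemma-2.5} the temptation is to chase the commutation through a limit $\beta^{k}\alpha^{k}\to e$ and continuity of multiplication; that route requires one to argue that the whole net, and not merely a subnet, has $e$ as its only cluster point. The cleaner observation is that $\alpha e\beta$ is simultaneously an idempotent of $S$ (by $\alpha\beta=\mathbb{I}$ together with the commutation $e(\beta\alpha)=(\beta\alpha)e$) and an element of the ideal $I$, so uniqueness of the idempotent of the topological group $I$ finishes the matter in a single stroke; everything else is two lines of rearrangement.
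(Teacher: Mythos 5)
Your proof is correct, but it reaches the key commutation $e\gamma=\gamma e$ for $\gamma\in\mathscr{C}_{\mathbb{N}}$ by a genuinely different and more self-contained route than the paper. The paper first observes (via Lemma~\ref{lemma-2.5} and the uniqueness of the idempotent from Theorem~\ref{theorem-2.6}) that $e_0\in\operatorname{cl}_{S}(\mathscr{C}_{\mathbb{N}})$, and then invokes Proposition~III.2 of \cite{Eberhart-Selden-1969} — an external result about the closure of the bicyclic monoid — to conclude that $e_0$ commutes with $\mathscr{C}_{\mathbb{N}}$. You instead prove this commutation purely algebraically: $\alpha e\beta$ is an idempotent of $S$ (using $\alpha\beta=\mathbb{I}$ and the commutativity of $E(S)$) lying in the ideal $I$ (Proposition~\ref{proposition-2.3}), hence equals $e$ by uniqueness, and the identities $e\alpha=\alpha e$, $e\beta=\beta e$ fall out by one more multiplication. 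This avoids Lemma~\ref{lemma-2.5} and the citation of \cite{Eberhart-Selden-1969} entirely, at the cost of nothing; the only external inputs are Proposition~\ref{proposition-2.3} and Theorem~\ref{theorem-2.6}, which the paper also needs. The extension to all of $\mathbf{I}\mathbb{N}_{\infty}^{\boldsymbol{g}[j]}$ is the same idea in both arguments — sandwich $\gamma$ between an idempotent of $\mathbf{I}\mathbb{N}_{\infty}^{\boldsymbol{g}[j]}$ and an element of $\mathscr{C}_{\mathbb{N}}$ — with the paper using $\overrightarrow{\gamma}$ and Lemma~\ref{lemma-2.7} where you use the factorization $\gamma=(\gamma\gamma^{-1})\sigma_{k}$ coming from the fact that every element of $\mathbf{I}\mathbb{N}_{\infty}$ is a partial shift (Lemma~1 of \cite{Gutik-Savchuk-2018}); both factorizations are valid and interchangeable here. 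Your closing remark is also apt: the net-convergence argument one is tempted to run after Lemma~\ref{lemma-2.5} is exactly what both the idempotent-uniqueness trick and the paper's citation of Eberhart--Selden are designed to sidestep.
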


\begin{proof}
By Theorem~\ref{theorem-2.6}, $S\setminus\mathbf{I}\mathbb{N}_{\infty}^{\boldsymbol{g}[j]}$ is a group. Put $e_0$ be the unique idempotent of $S\setminus\mathbf{I}\mathbb{N}_{\infty}^{\boldsymbol{g}[j]}$. Also, by Lemma~\ref{lemma-2.5} the set $U(e_0)\cap E(\mathscr{C}_\mathbb{N})$ is infinite for any open neighbourhood $U(e_0)$ of the point $e_0$ in $S$. This implies that $e_0\in \operatorname{cl}_{S}(\mathscr{C}_\mathbb{N})$. Then by Proposition~III.2 of \cite{Eberhart-Selden-1969}, $e_0\cdot \gamma=\gamma\cdot e_0$ for any $\gamma\in \mathscr{C}_\mathbb{N}$.

Fix an arbitrary $\gamma\in \mathbf{I}\mathbb{N}_{\infty}^{\boldsymbol{g}[j]}$. By Lemma~\ref{lemma-2.7} we have that
\begin{equation*}
\overrightarrow{\gamma}\cdot{\overrightarrow{\gamma}}^{-1}\cdot\gamma=\gamma\cdot{\overrightarrow{\gamma}}^{-1}\cdot\overrightarrow{\gamma}= \overrightarrow{\gamma}\in \mathscr{C}_\mathbb{N}.
\end{equation*}
Since $S$ is an inverse semigroup and $S\setminus\mathbf{I}\mathbb{N}_{\infty}^{\boldsymbol{g}[j]}$ is an ideal of $S$, Lemma~\ref{lemma-2.7} implies that
\begin{align*}
  e_0\cdot \gamma&= \big(e_0\cdot \overrightarrow{\gamma}\cdot{\overrightarrow{\gamma}}^{-1}\big)\cdot \gamma=\\
   &= e_0\cdot \big(\overrightarrow{\gamma}\cdot{\overrightarrow{\gamma}}^{-1}\cdot \gamma\big)=\\
   &= e_0\cdot \overrightarrow{\gamma}=\\
   &= \overrightarrow{\gamma}\cdot e_0 =\\
   &= \big(\gamma\cdot{\overrightarrow{\gamma}}^{-1}\cdot\overrightarrow{\gamma}\big)\cdot e_0 =
   \end{align*}
   \begin{align*}
   &= \gamma\cdot\big({\overrightarrow{\gamma}}^{-1}\cdot\overrightarrow{\gamma}\cdot e_0\big) =\\
   &= \gamma\cdot e_0.
\end{align*}
This completes the proof of the proposition.
\end{proof}

\begin{corollary}\label{corollary-2.9}
Let $j$ be any positive integer and $\mathbf{I}\mathbb{N}_{\infty}^{\boldsymbol{g}[j]}$ be a proper dense subsemigroup of a Hausdorff topological inverse semigroup $S$. Then the group $S\setminus\mathbf{I}\mathbb{N}_{\infty}^{\boldsymbol{g}[j]}$ contains a dense cyclic subgroup.
\end{corollary}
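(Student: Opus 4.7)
The plan is to exhibit an explicit dense cyclic subgroup of the topological group $G=S\setminus\mathbf{I}\mathbb{N}_{\infty}^{\boldsymbol{g}[j]}$ (a group by Theorem~\ref{theorem-2.6}), namely $\langle e_0\alpha\rangle$, where $e_0$ denotes the unique idempotent of $G$. The element $\alpha$ is the natural choice because it is a preimage of a generator of $\mathbb{Z}(+)\cong\mathbf{I}\mathbb{N}_{\infty}^{\boldsymbol{g}[j]}/\mathfrak{C}_{\mathbf{mg}}$ under the quotient map from Proposition~\ref{proposition-1.3}.

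First I would verify that the left translation $\phi\colon\mathbf{I}\mathbb{N}_{\infty}^{\boldsymbol{g}[j]}\to G$, $\gamma\mapsto e_0\gamma$, is well-defined (since $G$ is an ideal by Proposition~\ref{proposition-2.3}) and a semigroup homomorphism: commuting $e_0$ past $\gamma_1$ via Proposition~\ref{proposition-2.8} yields $\phi(\gamma_1)\phi(\gamma_2)=e_0\gamma_1\cdot e_0\gamma_2=e_0^2\gamma_1\gamma_2=\phi(\gamma_1\gamma_2)$. Because $\mathbf{I}\mathbb{N}_{\infty}^{\boldsymbol{g}[j]}$ is inverse and the codomain is a group, the standard identity $\phi(s)\phi(s^{-1})\phi(s)=\phi(s)$ forces $\phi(\gamma^{-1})=\phi(\gamma)^{-1}$, so the image of $\phi$ is a subgroup of $G$. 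Being a group, this image is a homomorphic image of the minimum-group-congruence quotient $\mathbf{I}\mathbb{N}_{\infty}^{\boldsymbol{g}[j]}/\mathfrak{C}_{\mathbf{mg}}\cong\mathbb{Z}(+)$, hence cyclic; since $\alpha$ maps to a generator of $\mathbb{Z}(+)$ under $\pi_{\mathfrak{C}_{\mathbf{mg}}}$, the image equals $\langle\phi(\alpha)\rangle=\langle e_0\alpha\rangle$.

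Density is then a short topological check. For any $x\in G$ and any open neighbourhood $U$ of $x$ in $S$, the preimage $L_{e_0}^{-1}(U)$ under continuous left multiplication by $e_0$ is open in $S$ and contains $x=e_0x$; density of $\mathbf{I}\mathbb{N}_{\infty}^{\boldsymbol{g}[j]}$ in $S$ then supplies $\gamma\in\mathbf{I}\mathbb{N}_{\infty}^{\boldsymbol{g}[j]}\cap L_{e_0}^{-1}(U)$, giving $e_0\gamma=\phi(\gamma)\in\langle e_0\alpha\rangle\cap U$. I anticipate no real obstacle beyond the careful identification of the cyclic structure of the image of $\phi$ via $\mathfrak{C}_{\mathbf{mg}}$ and Proposition~\ref{proposition-2.8}; the remainder is standard inverse-semigroup and topological-group bookkeeping.
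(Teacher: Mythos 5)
Your proposal is correct and follows essentially the same route as the paper: both define the homomorphism $\gamma\mapsto e_0\gamma$ via Proposition~\ref{proposition-2.8}, identify its image as a cyclic group, and conclude density of that image in $S\setminus\mathbf{I}\mathbb{N}_{\infty}^{\boldsymbol{g}[j]}$ from the density of $\mathbf{I}\mathbb{N}_{\infty}^{\boldsymbol{g}[j]}$ in $S$. The only cosmetic differences are that you obtain cyclicity by factoring through $\mathfrak{C}_{\mathbf{mg}}$ (Proposition~\ref{proposition-1.3}) where the paper cites Corollary~1.32 of Clifford--Preston, and you argue density directly from continuity of left translation rather than citing Engelking.
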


\begin{proof}
By Proposition~\ref{proposition-2.8}, the unique idempotent $e_0$ of $S\setminus\mathbf{I}\mathbb{N}_{\infty}^{\boldsymbol{g}[j]}$ commutes with all elements of the semigroup $\mathbf{I}\mathbb{N}_{\infty}^{\boldsymbol{g}[j]}$ and hence the map $\mathfrak{h}\colon S\to S\setminus\mathbf{I}\mathbb{N}_{\infty}^{\boldsymbol{g}[j]}$, $(\gamma)\mathfrak{h}=e_0\cdot\gamma$ is a homomorphisms. Since $S\setminus\mathbf{I}\mathbb{N}_{\infty}^{\boldsymbol{g}[j]}$ is a subgroup of $S$, by Corollary~1.32 of \cite{Clifford-Preston-1961-1967} the image $(\mathbf{I}\mathbb{N}_{\infty}^{\boldsymbol{g}[j]})\mathfrak{h}$ is a cyclic group. Also, since $\mathbf{I}\mathbb{N}_{\infty}^{\boldsymbol{g}[j]}$ is a dense subset of a topological semigroup $S$, Proposition~1.4.1 of \cite{Engelking-1989} implies that the image $(\mathbf{I}\mathbb{N}_{\infty}^{\boldsymbol{g}[j]})\mathfrak{h}$ is a dense subset of $S\setminus\mathbf{I}\mathbb{N}_{\infty}^{\boldsymbol{g}[j]}$.
\end{proof}

\section{On a closure of the monoid $\mathbf{I}\mathbb{N}_{\infty}^{\boldsymbol{g}[j]}$ in a locally compact topological inverse semigroup}\label{sec-4}

In \cite{Eberhart-Selden-1969} Eberhart and Selden described  the closure of the bicyclic monoid in a locally compact topological inverse semigroup. We give this description in the terms of the monoid $\mathscr{C}_{\mathbb{N}}$.

\begin{example}\label{example-4.1}
The definition of the bicyclic monoid, its algebraic properties (see \cite[Section~1.12]{Clifford-Preston-1961-1967}) and Remark~\ref{remark-1.1} imply that the following relation
\begin{equation*}
  \gamma\sim\delta \quad \hbox{if and only if}\quad n_{\gamma}^{\mathbf{r}}-n_{\gamma}^{\mathbf{d}}=n_{\delta}^{\mathbf{r}}-n_{\delta}^{\mathbf{d}}, \qquad \gamma,\delta\in \mathscr{C}_{\mathbb{N}},
\end{equation*}
coincides with the minimum group congruence $\mathfrak{C}_{\mathbf{mg}}$ on $\mathscr{C}_{\mathbb{N}}$. Moreover,  the quotient semigroup $\mathscr{C}_{\mathbb{N}}/\mathfrak{C}_{\mathbf{mg}}$ is isomorphic to the additive group of integers $\mathbb{Z}(+)$ by the map
\begin{equation*}
\pi_{\mathfrak{C}_{\mathbf{mg}}}\colon \mathscr{C}_{\mathbb{N}}\to \mathbb{Z}(+), \quad \gamma\mapsto n_{\delta}^{\mathbf{r}}-n_{\delta}^{\mathbf{d}}.
\end{equation*}
The minimum group congruence $\mathfrak{C}_{\mathbf{mg}}$ on $\mathscr{C}_{\mathbb{N}}$ defines the natural partial order $\preccurlyeq$ on the monoid $\mathscr{C}_{\mathbb{N}}$ in the following way:
\begin{equation*}
  \gamma\preccurlyeq\delta \quad \hbox{if and only if}\quad n_{\gamma}^{\mathbf{r}}-n_{\gamma}^{\mathbf{d}}=n_{\delta}^{\mathbf{r}}-n_{\delta}^{\mathbf{d}} \quad \hbox{and} \quad n_{\gamma}^{\mathbf{d}}\geqslant n_{\delta}^{\mathbf{d}}, \qquad \gamma,\delta\in \mathscr{C}_{\mathbb{N}}.
\end{equation*}
We put $\mathcal{C}\mathscr{C}_{\mathbb{N}}=\mathscr{C}_{\mathbb{N}}\sqcup\mathbb{Z}(+)$ and extend the multiplications from the semigroup $\mathscr{C}_{\mathbb{N}}$ and the group $\mathbb{Z}(+)$ onto $\mathcal{C}\mathscr{C}_{\mathbb{N}}$ in the following way:
\begin{equation*}
  k\cdot\gamma=\gamma\cdot k=k+(\gamma)\pi_{\mathfrak{C}_{\mathbf{mg}}}\in \mathbb{Z}(+), \qquad \hbox{for all} \quad k\in\mathbb{Z}(+) \quad \hbox{and} \quad \gamma\in \mathscr{C}_{\mathbb{N}}.
\end{equation*}
Then so defined binary operation is a semigroup operation on $\mathcal{C}\mathscr{C}_{\mathbb{N}}$ such that $\mathbb{Z}(+)$ is an ideal in $\mathcal{C}\mathscr{C}_{\mathbb{N}}$. Also,  this semigroup operation extends the  natural partial order $\preccurlyeq$ from $\mathscr{C}_{\mathbb{N}}$ onto $\mathcal{C}\mathscr{C}_{\mathbb{N}}$ in the following way:
\begin{itemize}
  \item[$(i)$] all distinct elements of $\mathbb{Z}(+)$ are pair-wise incomparable;
  \item[$(ii)$] $k\preccurlyeq\gamma$ if and only if $n_{\gamma}^{\mathbf{r}}-n_{\gamma}^{\mathbf{d}}=k$ for $k\in \mathbb{Z}(+)$ and $\gamma\in \mathscr{C}_{\mathbb{N}}$.
\end{itemize}
For any $x\in \mathcal{C}\mathscr{C}_{\mathbb{N}}$ we denote ${\uparrow_{\preccurlyeq}}x=\left\{y\in \mathcal{C}\mathscr{C}_{\mathbb{N}}\colon x\preccurlyeq y\right\}$.

We define the topology $\tau_{\textsf{lc}}$ on $\mathcal{C}\mathscr{C}_{\mathbb{N}}$ in the following way:
\begin{itemize}
  \item[$(i)$] all  elements of the monoid $\mathscr{C}_{\mathbb{N}}$ are isolated points in $(\mathcal{C}\mathscr{C}_{\mathbb{N}},\tau_{\textsf{lc}})$;
  \item[$(ii)$] for any $k\in \mathbb{Z}(+)$ the family $\mathscr{B}_{\textsf{lc}}(k)=\left\{U_i(k)\colon i\in\mathbb{N}\right\}$, where
  \begin{equation*}
    U_i(k)=\left\{k\right\}\cup\big\{\gamma\in \mathscr{C}_{\mathbb{N}}\colon k\preccurlyeq\gamma \hbox{~and~} n_{\gamma}^{\mathbf{d}}\geqslant i\big\},
  \end{equation*}
  is the base of the topology $\tau_{\textsf{lc}}$ at the point $k\in \mathbb{Z}(+)$.
\end{itemize}

In \cite{Eberhart-Selden-1969} Eberhart and Selden proved that $\tau_{\textsf{lc}}$ is the unique Hausdorff locally compact semigroup inverse topology on $\mathcal{C}\mathscr{C}_{\mathbb{N}}$. Moreover, they shown that if  $\mathscr{C}_{\mathbb{N}}$ is a proper dense subsemigroup of a Hausdorff locally compact topological inverse semigroup $S$, then $S$ is topologically isomorphic to $(\mathcal{C}\mathscr{C}_{\mathbb{N}},\tau_{\textsf{lc}})$.
\end{example}

\begin{example}\label{example-4.2}
Let $\mathcal{C}\mathbf{I}\mathbb{N}_{\infty}^{\boldsymbol{g}[j]}$ be a semigroup defined in Example~\ref{example-1.4}. Put $M$ be an arbitrary subset of $\{2,\ldots,j\}$.

We define the topology $\tau_{\textsf{lc}}^M$ on $\mathcal{C}\mathbf{I}\mathbb{N}_{\infty}^{\boldsymbol{g}[j]}$ in the following way:
\begin{itemize}
  \item[$(i)$] all elements of the monoid $\mathbf{I}\mathbb{N}_{\infty}^{\boldsymbol{g}[j]}$ are isolated points in $\big(\mathcal{C}\mathbf{I}\mathbb{N}_{\infty}^{\boldsymbol{g}[j]},\tau_{\textsf{lc}}^M\big)$;
  \item[$(ii)$] for any $k\in \mathbb{Z}(+)$ the family $\mathscr{B}_{\textsf{lc}}^M(k)=\left\{U_i^M(k)\colon i\in\mathbb{N}\right\}$, where
  \begin{equation*}
    U_i^M(k)=\left\{k\right\}\cup\big\{\gamma\in \mathcal{C}\mathbf{I}\mathbb{N}_{\infty}^{\boldsymbol{g}[j]}[M]\colon k\preccurlyeq\gamma \hbox{~and~} n_{\gamma}^{\mathbf{d}}\geqslant i\big\},
  \end{equation*}
  is the base of the topology $\tau_{\textsf{lc}}^M$ at the point $k\in \mathbb{Z}(+)$.
\end{itemize}
\end{example}

\begin{remark}\label{remark-4.3}
\begin{itemize}
  \item[1.] We observe that a simple verifications show that the following conditions hold:
  \begin{itemize}
    \item[$(i)$] if $k=0$ then
    $
    U_i^M(k)=U_i^M(0)=\left\{0\right\}\cup\big\{\gamma\in \mathcal{C}\mathbf{I}\mathbb{N}_{\infty}^{\boldsymbol{g}[j]}[M]\colon k\preccurlyeq\gamma \hbox{~and~} \gamma\notin {\uparrow_{\preccurlyeq}} \beta^{i-2}\alpha^{i-2}\big\}
    $;
    \item[$(ii)$] if $k>0$ then
    $
    U_i^M(k)=\left\{0\right\}\cup\big\{\gamma\in \mathcal{C}\mathbf{I}\mathbb{N}_{\infty}^{\boldsymbol{g}[j]}[M]\colon k\preccurlyeq\gamma \hbox{~and~} \gamma\notin {\uparrow_{\preccurlyeq}} \beta^{i-2}\alpha^{i-2+k}\big\}
    $;
    \item[$(iii)$]  if $k<0$ then
    $
    U_i^M(k)=\left\{0\right\}\cup\big\{\gamma\in \mathcal{C}\mathbf{I}\mathbb{N}_{\infty}^{\boldsymbol{g}[j]}[M]\colon k\preccurlyeq\gamma \hbox{~and~} \gamma\notin {\uparrow_{\preccurlyeq}} \beta^{i-2-k}\alpha^{i-2}\big\}
    $.
  \end{itemize}

  \item[2.] Since all elements of the monoid $\mathbf{I}\mathbb{N}_{\infty}^{\boldsymbol{g}[j]}$ are isolated points in $\big(\mathcal{C}\mathbf{I}\mathbb{N}_{\infty}^{\boldsymbol{g}[j]},\tau_{\textsf{lc}}^M\big)$ and all distinct elements of the subgroup $\mathbb{Z}(+)$ are incomporable with the respect to the natural partial order on $\mathcal{C}\mathbf{I}\mathbb{N}_{\infty}^{\boldsymbol{g}[j]}$, Proposition~\ref{proposition-1.2} implies that $\tau_{\textsf{lc}}^M$ is a Hausdorff topology on $\mathcal{C}\mathbf{I}\mathbb{N}_{\infty}^{\boldsymbol{g}[j]}$. Also, since for any $\gamma\in \mathscr{C}_{\mathbb{N}}$ the set ${\uparrow_{\preccurlyeq}}\gamma$ is finite we get that $U_i^M(k)$ is compact for any $k\in Z(+)$ and any positive integer $i$. This implies that the space $\big(\mathcal{C}\mathbf{I}\mathbb{N}_{\infty}^{\boldsymbol{g}[j]},\tau_{\textsf{lc}}^M\big)$ is locally comapct, and hence by Theorems~3.3.1, 4.2.9 and Corollary~3.3.6 from \cite{Engelking-1989} it is metrizable.
\end{itemize}
\end{remark}

\begin{proposition}\label{proposition-4.4}
$\big(\mathcal{C}\mathbf{I}\mathbb{N}_{\infty}^{\boldsymbol{g}[j]},\tau_{\textsf{lc}}^M\big)$ is a topological inverse semigroup.
\end{proposition}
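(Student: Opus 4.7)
The plan is to verify that the multiplication is jointly continuous and that the inversion is continuous on $\big(\mathcal{C}\mathbf{I}\mathbb{N}_{\infty}^{\boldsymbol{g}[j]},\tau_{\textsf{lc}}^M\big)$. Since every point of $\mathbf{I}\mathbb{N}_{\infty}^{\boldsymbol{g}[j]}$ is isolated in this topology, the only cases that require attention are those in which at least one argument lies in the group $\mathbb{Z}(+)$. In each such case every basic neighbourhood has the form $U_i^M(\cdot)$, so I would reduce every check to three conditions on the image: membership in $\mathbf{I}\mathbb{N}_{\infty}^{\boldsymbol{g}[j]}[M]$, the correct value under $\pi_{\mathfrak{C}_{\mathbf{mg}}}$, and a prescribed lower bound on $n_\cdot^{\mathbf{d}}$.

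For a fixed $\gamma\in\mathbf{I}\mathbb{N}_{\infty}^{\boldsymbol{g}[j]}$ with $s=(\gamma)\pi_{\mathfrak{C}_{\mathbf{mg}}}$ and $k\in\mathbb{Z}(+)$, continuity of $(\gamma,k)\mapsto\gamma\cdot k=k+s$ is obtained by taking $U_m^M(k)$ with $m\geqslant\max\{i+s,n_\gamma^{\mathbf{r}}\}$. For any $\delta\in U_m^M(k)\setminus\{k\}$, the inequality $n_\gamma^{\mathbf{r}}\leqslant n_\delta^{\mathbf{d}}$ puts us in the first case of the proof of Proposition~\ref{proposition-1.7}, giving $n_{\gamma\delta}^{\mathbf{r}}=n_\delta^{\mathbf{r}}$ and $\operatorname{ran}(\gamma\delta)\subseteq\operatorname{ran}\delta$; Remark~\ref{remark-1.6} then transports the $[M]$-property from $\delta$ to $\gamma\delta$, Proposition~\ref{proposition-1.3} yields $(\gamma\delta)\pi_{\mathfrak{C}_{\mathbf{mg}}}=k+s$, and $n_{\gamma\delta}^{\mathbf{d}}=n_\delta^{\mathbf{d}}-s\geqslant i$, so $\gamma\delta\in U_i^M(k+s)$. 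The symmetric pair $(k,\gamma)\mapsto k\cdot\gamma$ is treated by the second case of Proposition~\ref{proposition-1.7}. For $(k,\ell)\in\mathbb{Z}(+)\times\mathbb{Z}(+)$, given $U_i^M(k+\ell)$ I would choose $m\geqslant i$ and $n\geqslant i+\max\{k,0\}$: for $\gamma\in U_m^M(k)\setminus\{k\}$ and $\delta\in U_n^M(\ell)\setminus\{\ell\}$, Proposition~\ref{proposition-1.7} gives $\gamma\delta\in\mathbf{I}\mathbb{N}_{\infty}^{\boldsymbol{g}[j]}[M]$ with slope $k+\ell$, and its two regimes yield either $n_{\gamma\delta}^{\mathbf{d}}=n_\gamma^{\mathbf{d}}\geqslant i$ or $n_{\gamma\delta}^{\mathbf{d}}=n_\delta^{\mathbf{d}}-k\geqslant i$; the degenerate products $k\cdot\delta$, $\gamma\cdot\ell$, $k\cdot\ell$ all equal $k+\ell$ by the definition of the operation in Example~\ref{example-1.4}.

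For continuity of the inversion at $k\in\mathbb{Z}(+)$, given $U_i^M(-k)$ I would take $U_{i+\max\{k,0\}}^M(k)$: for every $\gamma$ in this neighbourhood, Proposition~\ref{proposition-1.7} together with Remark~\ref{remark-1.6} gives $\gamma^{-1}\in\mathbf{I}\mathbb{N}_{\infty}^{\boldsymbol{g}[j]}[M]$, its image under $\pi_{\mathfrak{C}_{\mathbf{mg}}}$ is $-k$, and $n_{\gamma^{-1}}^{\mathbf{d}}=n_\gamma^{\mathbf{r}}=n_\gamma^{\mathbf{d}}+k\geqslant i$, so $\gamma^{-1}\in U_i^M(-k)$. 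The main obstacle across the whole argument is keeping track of $n_\cdot^{\mathbf{d}}$ and $n_\cdot^{\mathbf{r}}$ in the two regimes $n_\gamma^{\mathbf{r}}\leqslant n_\delta^{\mathbf{d}}$ and $n_\gamma^{\mathbf{r}}>n_\delta^{\mathbf{d}}$ of Proposition~\ref{proposition-1.7}, and invoking the appropriate characterization (range-side or domain-side) of $\mathbf{I}\mathbb{N}_{\infty}^{\boldsymbol{g}[j]}[M]$ from Remark~\ref{remark-1.6} in each regime; once this bookkeeping is in place, the slope condition and $[M]$-membership both fall out automatically.
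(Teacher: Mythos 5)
Your argument is correct and takes essentially the same route as the paper's proof: reduce to continuity at pairs involving a point of $\mathbb{Z}(+)$, track the product through the two regimes of Proposition~\ref{proposition-1.7} together with Remark~\ref{remark-1.6} and the homomorphism $\pi_{\mathfrak{C}_{\mathbf{mg}}}$, and use $\left(U_i^M(k)\right)^{-1}=U_i^M(-k)$ for the inversion; your version merely spells out the index bookkeeping that the paper leaves implicit. One small sign slip: in the inversion step you need $n_{\gamma^{-1}}^{\mathbf{d}}=n_{\gamma}^{\mathbf{d}}+k\geqslant i$, so the correct neighbourhood is $U_{i+\max\{-k,0\}}^M(k)$ rather than $U_{i+\max\{k,0\}}^M(k)$ (your choice fails for $k<0$); this is trivially repaired and does not affect the structure of the proof.
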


\begin{proof}
Since all elements of the monoid $\mathbf{I}\mathbb{N}_{\infty}^{\boldsymbol{g}[j]}$ are isolated points in $\big(\mathcal{C}\mathbf{I}\mathbb{N}_{\infty}^{\boldsymbol{g}[j]},\tau_{\textsf{lc}}^M\big)$ and all distinct elements of the subgroup $\mathbb{Z}(+)$ commute with elements of $\mathbf{I}\mathbb{N}_{\infty}^{\boldsymbol{g}[j]}$, it is suffices to check the continuity of the semigroup operation at the pairs $(\gamma,k_1)$ and $(k_1,k_2)$ where $\gamma\in \mathbf{I}\mathbb{N}_{\infty}^{\boldsymbol{g}[j]}$ and $k_1,k_2\in\mathbb{Z}(+)$.

Fix any $\gamma\in \mathbf{I}\mathbb{N}_{\infty}^{\boldsymbol{g}[j]}$ and $k\in\mathbb{Z}(+)$. Then $\overrightarrow{\gamma}=\beta^p\alpha^r$ for some fixed non-negative integers $p$ and $r$. Hence
\begin{equation*}
\gamma\cdot k=(\gamma)\pi_{\mathfrak{C}_{\mathbf{mg}}}+k= \big(\overrightarrow{\gamma}\big)\pi_{\mathfrak{C}_{\mathbf{mg}}}+k= r-p+k,
\end{equation*}
and for any positive integer $i>\max\left\{p,r\right\}+j$ we have that
\begin{equation*}
 \gamma\cdot U_i^M(k)\subseteq U_i^M(r-p+k).
\end{equation*}

Fix any $k_1,k_2\in\mathbb{Z}(+)$. Then for any positive integer $i>j$ by Proposition~1.4.7 of \cite{Lawson-1998} and Proposition~\ref{proposition-1.7} we have that
$
 U_i^M(k_1)\cdot U_i^M(k_2)\subseteq U_i^M(k_1+k_2).
$

The above arguments and the equality
$
\left(U_i^M(k)\right)^{-1}=U_i^M(-k)
$
complete the proof of the proposition.
\end{proof}

\begin{lemma}\label{lemma-4.5}
Let $j$ be any positive integer and $\mathbf{I}\mathbb{N}_{\infty}^{\boldsymbol{g}[j]}$ be a proper dense subsemigroup of a Hausdorff locally compact topological inverse semigroup $S$. Then $G=S\setminus\mathbf{I}\mathbb{N}_{\infty}^{\boldsymbol{g}[j]}$ is topologically isomorphic to the discrete additive group of integers $\mathbb{Z}(+)$.
\end{lemma}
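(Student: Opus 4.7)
The plan is to reduce the lemma to the Eberhart--Selden description of the closure of the bicyclic monoid $\mathscr{C}_{\mathbb{N}}$ recalled in Example~\ref{example-4.1}, used through the bicyclic submonoid $\mathscr{C}_{\mathbb{N}}\subseteq\mathbf{I}\mathbb{N}_{\infty}^{\boldsymbol{g}[j]}$. By Theorem~\ref{theorem-2.6} the set $G=S\setminus\mathbf{I}\mathbb{N}_{\infty}^{\boldsymbol{g}[j]}$ is a topological group whose unique idempotent I denote by $e_0$; since $G$ is closed in the locally compact Hausdorff space $S$ (Proposition~\ref{proposition-2.3}), $G$ itself is locally compact Hausdorff. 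By Corollary~\ref{corollary-2.9} it contains the dense cyclic subgroup $H = \big\{e_0\cdot\gamma \colon \gamma\in\mathbf{I}\mathbb{N}_{\infty}^{\boldsymbol{g}[j]}\big\}$, and since the centre of $G$ is closed and contains $H$, the group $G$ is abelian.

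The crucial step is to look at the closure $T = \operatorname{cl}_S(\mathscr{C}_{\mathbb{N}})$ in $S$. As the closure of an inverse subsemigroup in a topological inverse semigroup, $T$ is a closed topological inverse subsemigroup of $S$, hence locally compact Hausdorff. By Lemma~\ref{lemma-2.5}, every open neighbourhood of $e_0$ meets $E(\mathscr{C}_{\mathbb{N}})$, so $e_0\in T\setminus\mathscr{C}_{\mathbb{N}}$ and $\mathscr{C}_{\mathbb{N}}$ is a proper dense subsemigroup of $T$. Applying Example~\ref{example-4.1} (the Eberhart--Selden theorem) with $T$ in the role played there by $S$, I obtain that $T$ is topologically isomorphic to $(\mathcal{C}\mathscr{C}_{\mathbb{N}},\tau_{\textsf{lc}})$; in particular, the set $D = T\setminus\mathscr{C}_{\mathbb{N}}$ is a subgroup of $S$ topologically isomorphic to the discrete additive group $\mathbb{Z}(+)$.

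Because $\mathbf{I}\mathbb{N}_{\infty}^{\boldsymbol{g}[j]}$ is a discrete subspace of $S$ by Theorem~\ref{theorem-2.2}, the set $\mathscr{C}_{\mathbb{N}}$ is closed in $\mathbf{I}\mathbb{N}_{\infty}^{\boldsymbol{g}[j]}$, which forces $T\cap\mathbf{I}\mathbb{N}_{\infty}^{\boldsymbol{g}[j]}=\mathscr{C}_{\mathbb{N}}$ and hence $D = T\cap G\subseteq G$. The dense cyclic subgroup $H$ already sits inside $D$: a generator $e_0\cdot\alpha$ of $H$ is the product of $e_0\in T$ and $\alpha\in\mathscr{C}_{\mathbb{N}}\subseteq T$, so it lies in $T$, and it also lies in the ideal $G$; thus $e_0\cdot\alpha\in T\cap G = D$, giving $H\subseteq D$.

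To finish, I will invoke the standard fact that a discrete subgroup of an abelian Hausdorff topological group is closed: picking an open symmetric neighbourhood $U$ of $e_0$ in $G$ with $U\cdot U^{-1}\cap D = \{e_0\}$ forces any convergent net in $D$ to be eventually constant. Hence $D$ is closed in $G$, and since $H\subseteq D$ is dense in $G$, I conclude $G = \operatorname{cl}_G(H)\subseteq D$; the reverse inclusion is immediate, so $G = D$ is topologically isomorphic to $\mathbb{Z}(+)$ with the discrete topology. The main obstacle is really just spotting the reduction to $T$: once the Eberhart--Selden description becomes available inside $S$ via the bicyclic submonoid, everything else is formal, the only external ingredient being closedness of discrete subgroups in Hausdorff topological groups.
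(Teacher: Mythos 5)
Your argument is correct, and its core reduction is the same as the paper's: both proofs use Lemma~\ref{lemma-2.5} to see that $\mathscr{C}_{\mathbb{N}}$ is a proper dense subsemigroup of the locally compact inverse semigroup $T=\operatorname{cl}_S(\mathscr{C}_{\mathbb{N}})$, then invoke the Eberhart--Selden description (Theorem~V.7 of \cite{Eberhart-Selden-1969}, i.e.\ Example~\ref{example-4.1}) to identify $D=T\setminus\mathscr{C}_{\mathbb{N}}$ with discrete $\mathbb{Z}(+)$, and both use Corollary~\ref{corollary-2.9}. Where you diverge is the endgame. The paper first applies Weil's dichotomy (a locally compact monothetic group is compact or discrete) to $G$, then uses the fact that $D$ is an infinite closed discrete subgroup of $G$ together with results from \cite{Arhangelskii-Tkachenko-2008} to exclude compactness and pin down $G\cong\mathbb{Z}(+)$. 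You instead observe that the dense cyclic subgroup of Corollary~\ref{corollary-2.9} is actually contained in $D$ (since $e_0\in T$, $\alpha\in T$, and $e_0\gamma=e_0\overrightarrow{\gamma}\in T\cap G=D$), so that closedness of the discrete subgroup $D$ plus density immediately gives $G=D$; this bypasses Weil's theorem entirely and does not even need local compactness of $G$ at that stage --- a genuinely more elementary finish. Two small polish points: the claim that the centre of $G$ ``contains $H$'' needs the intermediate step that the centralizer of $H$ is closed and contains the dense set $H$, hence is all of $G$ (though abelianness is in fact not needed, since a discrete subgroup of any Hausdorff topological group is closed); and the identification of $e_0\alpha$ as a generator of $H$ deserves the one-line computation $e_0\gamma=(e_0\beta)^p(e_0\alpha)^r$ with $e_0\beta=(e_0\alpha)^{-1}$, or alternatively one can simply note that every $e_0\gamma=e_0\overrightarrow{\gamma}$ already lies in $T\cap G=D$, which makes the generator irrelevant.
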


\begin{proof}
By Corollary~\ref{corollary-2.9}, $G$ is a subgroup of $\mathbf{I}\mathbb{N}_{\infty}^{\boldsymbol{g}[j]}$ which contains a dense cyclic subgroup. By Theorem~\ref{theorem-2.2}, $\mathbf{I}\mathbb{N}_{\infty}^{\boldsymbol{g}[j]}$ is a discrete subspace of $S$, and hence by Theorem~3.3.9 of \cite{Engelking-1989}, $G$ is a closed subspace of $S$. Then Theorem~3.3.8 of \cite{Engelking-1989} and Theorem~\ref{theorem-2.6} imply that $G$ with the induced topology from $S$ is a locally compact topological group. By the Weil Theorem (see \cite{Weil-1938}) the topological group $G$ is either compact or discrete. By Lemma~\ref{lemma-2.5} the remainder $\operatorname{cl}_S(\mathscr{C}_\mathbb{N})\setminus\mathscr{C}_\mathbb{N}$ of the subsemigroup $\mathscr{C}_\mathbb{N}$ in $S$ is non-empty. Then by Theorem~3.3.8 of \cite{Engelking-1989}, $\operatorname{cl}_S(\mathscr{C}_\mathbb{N})$ is a locally compact space. Theorem~V.7 of \cite{Eberhart-Selden-1969} implies that $H=\operatorname{cl}_S(\mathscr{C}_\mathbb{N})\setminus\mathscr{C}_\mathbb{N}$ is a group, which is topologically isomorphic to the discrete additive group of integers $\mathbb{Z}(+)$. By Proposition~1.4.19 of \cite{Arhangelskii-Tkachenko-2008}, $H$ is a closed discrete subgroup of $G$, and hence by Theorem~1.4.23 of \cite{Arhangelskii-Tkachenko-2008} the topological group $G$ is topologically isomorphic to the discrete additive group of integers $\mathbb{Z}(+)$.
\end{proof}

A partial order $\le$ on a topological space $X$ is called \emph{closed} (or \emph{continuous}) if the relation $\le$ is a closed subset of
$X\times X$ in the product topology \cite{Gierz-Hofmann-Keimel-Lawson-Mislove-Scott-2003}. A topological space with a closed partial order  is called a \emph{pospace}.

Later we assume that $\mathbf{I}\mathbb{N}_{\infty}^{\boldsymbol{g}[j]}$ is a proper dense subsemigroup of a Hausdorff locally compact topological inverse semigroup $S$ and we identify the topological group $G$ with the discrete additive group of integers $\mathbb{Z}(+)$.

We observe that equality ${\uparrow_{\preccurlyeq}}k=\left\{ \gamma\in S\colon \gamma\cdot 0=k\right\}$ implies that ${\uparrow_{\preccurlyeq}}k$ is an open-and-closed subset of $S$ for any $k\in \mathbb{Z}(+)$. Since $\mathbf{I}\mathbb{N}_{\infty}^{\boldsymbol{g}[j]}$ is a discrete subspace of $S$ the above arguments and Lemma~\ref{lemma-4.5} imply the following lemma:

\begin{lemma}\label{lemma-4.6}
The  natural partial order $\preccurlyeq$ on $S$ is closed, and moreover ${\uparrow_{\preccurlyeq}}x$ is open-and-closed subset of $S$ for any $x\in S$.
\end{lemma}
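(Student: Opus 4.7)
The plan is to split the lemma into the closedness of $\preccurlyeq$ in $S\times S$ and the clopenness of each principal up-set ${\uparrow_{\preccurlyeq}}x$, treating the two pieces $x\in\mathbf{I}\mathbb{N}_{\infty}^{\boldsymbol{g}[j]}$ and $x\in\mathbb{Z}(+)$ separately.

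First I would verify that $\preccurlyeq$ is closed in $S\times S$ by a standard equalizer argument available in any Hausdorff topological inverse semigroup. The natural partial order admits the description $s\preccurlyeq t$ if and only if $s=ss^{-1}t$, so it is the equalizer of the two continuous maps $(s,t)\mapsto s$ and $(s,t)\mapsto ss^{-1}t$ from $S\times S$ to $S$; Hausdorffness of $S$ forces this equalizer to be closed. In particular ${\uparrow_{\preccurlyeq}}x$ is closed in $S$ for every $x\in S$, as a horizontal slice of the closed relation $\preccurlyeq$.

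It remains to prove openness of ${\uparrow_{\preccurlyeq}}x$. The case $x\in\mathbb{Z}(+)$ was already recorded in the paragraph preceding the lemma, via continuity of the right translation $\gamma\mapsto\gamma\cdot 0$ into the discrete group $\mathbb{Z}(+)$. For $x=\gamma\in \mathbf{I}\mathbb{N}_{\infty}^{\boldsymbol{g}[j]}$, I would first show ${\uparrow_{\preccurlyeq}}\gamma\subseteq \mathbf{I}\mathbb{N}_{\infty}^{\boldsymbol{g}[j]}$: if $\gamma\preccurlyeq y$ with $y\in\mathbb{Z}(+)$, then $\gamma=ye$ for some $e\in E(S)$, and since $\mathbb{Z}(+)$ is an ideal of $S$ by Proposition~\ref{proposition-2.3}, this forces $\gamma\in\mathbb{Z}(+)$, a contradiction. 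Next, by Proposition~\ref{proposition-1.2}, $\delta\in{\uparrow_{\preccurlyeq}}\gamma$ forces $\operatorname{dom}\gamma\subseteq\operatorname{dom}\delta$ together with $n_{\delta}^{\mathbf{r}}-n_{\delta}^{\mathbf{d}}=n_{\gamma}^{\mathbf{r}}-n_{\gamma}^{\mathbf{d}}$; because $\mathbb{N}\setminus\operatorname{dom}\delta$ is a subset of the fixed finite set $\mathbb{N}\setminus\operatorname{dom}\gamma$, only finitely many $\operatorname{dom}\delta$ are possible, and since each element of $\mathbf{I}\mathbb{N}_{\infty}$ is a partial shift by Lemma~1 of \cite{Gutik-Savchuk-2018}, $\delta$ is then uniquely determined by $\operatorname{dom}\delta$ together with the prescribed shift value. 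Hence ${\uparrow_{\preccurlyeq}}\gamma$ is finite.

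To conclude, $\mathbf{I}\mathbb{N}_{\infty}^{\boldsymbol{g}[j]}$ is an open discrete subspace of $S$ (from Theorem~\ref{theorem-2.2} together with the opening remark in the proof of Proposition~\ref{proposition-2.3}), so every one of its points is isolated in $S$; the finite set ${\uparrow_{\preccurlyeq}}\gamma$ is therefore a finite union of clopen singletons and is open. I do not foresee any serious obstacle: the equalizer step is routine for topological inverse semigroups, and the finiteness of ${\uparrow_{\preccurlyeq}}\gamma$ is a direct combinatorial consequence of the noise bound built into the definition of $\mathbf{I}\mathbb{N}_{\infty}^{\boldsymbol{g}[j]}$.
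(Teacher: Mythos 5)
Your proof is correct and takes essentially the same route as the paper, which derives the lemma from the observation that ${\uparrow_{\preccurlyeq}}k=\{\gamma\in S\colon \gamma\cdot 0=k\}$ is clopen for $k\in\mathbb{Z}(+)$ together with the discreteness of $\mathbf{I}\mathbb{N}_{\infty}^{\boldsymbol{g}[j]}$ in $S$. You simply make explicit the details the paper leaves implicit: the equalizer argument for the closedness of $\preccurlyeq$ via $s\preccurlyeq t\Leftrightarrow s=ss^{-1}t$, and the finiteness of ${\uparrow_{\preccurlyeq}}\gamma$ for $\gamma\in\mathbf{I}\mathbb{N}_{\infty}^{\boldsymbol{g}[j]}$ coming from Proposition~\ref{proposition-1.2} and the partial-shift description of elements.
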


\begin{lemma}\label{lemma-4.7}
For any $k,l\in \mathbb{Z}(+)$ the subspace ${\uparrow_{\preccurlyeq}}k$ and ${\uparrow_{\preccurlyeq}}l$ of $S$ are homeomorphic. Moreover, the map $P_{\alpha^k}\colon {\uparrow_{\preccurlyeq}}0\to {\uparrow_{\preccurlyeq}}k$, $x\mapsto x\cdot \alpha^k$ is a homeomorphism for $k>0$, and the map $\Lambda_{\beta^k}\colon {\uparrow_{\preccurlyeq}}0\to {\uparrow_{\preccurlyeq}}k$, $x\mapsto \beta^k\cdot x$ is a homeomorphism for $k<0$.
\end{lemma}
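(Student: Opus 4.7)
The plan is to reduce the statement to a homeomorphism ${\uparrow_{\preccurlyeq}}0\to{\uparrow_{\preccurlyeq}}k$ for every $k\in\mathbb{Z}(+)$, since composing two such homeomorphisms yields ${\uparrow_{\preccurlyeq}}k\cong{\uparrow_{\preccurlyeq}}l$ for any pair $k,l$. I would then construct the required homeomorphism explicitly as the right translation by $\alpha^k$ (for $k>0$) or the left translation by the appropriate power of $\beta$ (for $k<0$), producing in each case an explicit two-sided continuous inverse.

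Fix $k>0$, and consider $P_{\alpha^k}\colon S\to S$ together with $P_{\beta^k}\colon S\to S$. Both maps are continuous because $S$ is a topological semigroup. Using the characterization ${\uparrow_{\preccurlyeq}}m=\{\gamma\in S\colon \gamma\cdot 0=m\}$ recalled just before Lemma~\ref{lemma-4.6}, and the fact that $\alpha^k\cdot 0=k$ and $\beta^k\cdot 0=-k$ in $\mathbb{Z}(+)$, a short associativity computation shows $P_{\alpha^k}({\uparrow_{\preccurlyeq}}0)\subseteq{\uparrow_{\preccurlyeq}}k$ and $P_{\beta^k}({\uparrow_{\preccurlyeq}}k)\subseteq{\uparrow_{\preccurlyeq}}0$. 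One direction of the bijectivity is then immediate from $\alpha^k\beta^k=\mathbb{I}$, which gives $P_{\beta^k}\circ P_{\alpha^k}=\mathrm{id}_{{\uparrow_{\preccurlyeq}}0}$.

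The main obstacle is the opposite composition: I need $y\cdot\beta^k\alpha^k=y$ for every $y\in{\uparrow_{\preccurlyeq}}k$, despite the fact that $\beta^k\alpha^k$ is only the identity on the tail $[k+1,\infty)$ rather than on all of $\mathbb{N}$. I would split into the two types of points in ${\uparrow_{\preccurlyeq}}k$. If $y\in\mathbf{I}\mathbb{N}_{\infty}^{\boldsymbol{g}[j]}$ with $n_y^{\mathbf{r}}-n_y^{\mathbf{d}}=k$, then by Lemma~1 of \cite{Gutik-Savchuk-2018} the element $y$ is a partial shift of $\mathbb{N}$ by $k$, so $\underline{n}_y^{\mathbf{r}}=\underline{n}_y^{\mathbf{d}}+k\geqslant k+1$; hence $\operatorname{ran}y\subseteq[k+1,\infty)$, and post-composition by the identity map $\beta^k\alpha^k$ of that set leaves $y$ fixed. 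For the single remaining point $y=k\in\mathbb{Z}(+)\cap{\uparrow_{\preccurlyeq}}k$, the extension rule of Example~\ref{example-1.4} gives $k\cdot\beta^k\alpha^k=k+(\beta^k\alpha^k)\pi_{\mathfrak{C}_{\mathbf{mg}}}=k+0=k$, as $\beta^k\alpha^k$ is an idempotent of $\mathscr{C}_{\mathbb{N}}$.

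For $k<0$ the argument is perfectly symmetric with left translations taking the roles of right translations: I would pair $\Lambda_{\beta^{-k}}$ with $\Lambda_{\alpha^{-k}}$ (noting that $-k>0$), verify the image inclusions by the same characterization via multiplication with $0\in\mathbb{Z}(+)$, and establish mutual inverseness. The identity $\beta^{-k}\alpha^{-k}\cdot\beta^{-k}=\beta^{-k}$ reduces to the observation that every $y\in{\uparrow_{\preccurlyeq}}k$ satisfies $\operatorname{dom}y\subseteq[|k|+1,\infty)$, which again follows because $y$ is a partial shift by the negative number $k$, together with a parallel treatment of the point $k\in\mathbb{Z}(+)\cap{\uparrow_{\preccurlyeq}}k$. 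Thus the technical heart of the whole argument is the tail-inclusion observation $\operatorname{ran}y\subseteq[k+1,\infty)$ (respectively $\operatorname{dom}y\subseteq[|k|+1,\infty)$); once this is in hand, continuity, surjectivity, and injectivity are purely mechanical.
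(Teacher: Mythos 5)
Your proposal is correct and follows essentially the same route as the paper: pair the translation $P_{\alpha^k}$ (resp.\ $\Lambda_{\beta^{|k|}}$) with the translation by the corresponding power of $\beta$ (resp.\ $\alpha$), check that the two compositions are identities, and invoke continuity of translations in the topological semigroup $S$. You in fact supply the one detail the paper leaves implicit --- that $(y)P_{\beta^k}P_{\alpha^k}=y$ on ${\uparrow_{\preccurlyeq}}k$ because every such $y$ is a partial shift by $k$ and hence has range (resp.\ domain) contained in $[k+1)$ --- so the argument is complete; only note the small typo where you wrote $\beta^{-k}\alpha^{-k}\cdot\beta^{-k}=\beta^{-k}$ instead of $\beta^{-k}\alpha^{-k}\cdot y=y$.
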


\begin{proof}
Proposition~1.4.7 from \cite{Lawson-1998} implies that the maps $P_{\alpha^k}$ and $\Lambda_{\beta^k}$ are well defined. It is obvious that complete to prove that the second part of the lemma holds. We shall show that the map $P_{\alpha^k}$ determines a homeomorphism from ${\uparrow_{\preccurlyeq}}0$ onto  ${\uparrow_{\preccurlyeq}}k$. In the case of the map $\Lambda_{\beta^k}$ the proof is similar.

We define a map $P_{\beta^k}\colon {\uparrow_{\preccurlyeq}}k\to {\uparrow_{\preccurlyeq}}0$ by the formula $(x)P_{\beta^k}=x\cdot \beta^k$. Then we have that $(0)P_{\alpha^k}=k$ and $(k)P_{\beta^k}=0$. Moreover, we have that $(x)P_{\alpha^k}P_{\beta^k}=x$ for any $x\in {\uparrow_{\preccurlyeq}}0$ and $(y)P_{\beta^k}P_{\alpha^k}=y$ for any $y\in {\uparrow_{\preccurlyeq}}k$. Therefore the compositions of maps $P_{\alpha^k}P_{\beta^k}\colon {\uparrow_{\preccurlyeq}}0\to {\uparrow_{\preccurlyeq}}0$ and $P_{\beta^k}P_{\alpha^k}\colon {\uparrow_{\preccurlyeq}}k\to {\uparrow_{\preccurlyeq}}k$ are identity maps of the sets ${\uparrow_{\preccurlyeq}}0$ and  ${\uparrow_{\preccurlyeq}}k$, respectively. Hence the maps $P_{\alpha^k}$ and $P_{\beta^k}$ are bijections, and hence $P_{\beta^k}$ is inverse of $P_{\alpha^k}$. Since right translations in the topological semigroup $S$ are continuous, the maps $P_{\alpha^k}\colon {\uparrow_{\preccurlyeq}}0\to {\uparrow_{\preccurlyeq}}k$ and $P_{\beta^k}\colon {\uparrow_{\preccurlyeq}}k\to {\uparrow_{\preccurlyeq}}0$ are homeomorphisms.
\end{proof}

By Lemma~\ref{lemma-2.5} the remainder $\operatorname{cl}_S(\mathscr{C}_\mathbb{N})\setminus\mathscr{C}_\mathbb{N}$ of the subsemigroup $\mathscr{C}_\mathbb{N}$ in $S$ is non-empty. Also, Theorem~V.7 of \cite{Eberhart-Selden-1969} implies that the remainder $\operatorname{cl}_S(\mathscr{C}_\mathbb{N})\setminus\mathscr{C}_\mathbb{N}$ is a group, which is topologically isomorphic to the discrete additive group of integers $\mathbb{Z}(+)$. This and results of \cite[Section~V]{Eberhart-Selden-1969} (see Example~\ref{example-4.1}) imply the following proposition:

\begin{proposition}\label{proposition-4.8}
Let $j$ be any positive integer and $\mathbf{I}\mathbb{N}_{\infty}^{\boldsymbol{g}[j]}$ be a proper dense subsemigroup of a Hausdorff locally compact topological inverse semigroup $\big(\mathcal{C}\mathbf{I}\mathbb{N}_{\infty}^{\boldsymbol{g}[j]},\tau\big)$. Then $\tau$ induces the topology $\tau_{\textsf{lc}}$ on the semigroup $\mathcal{C}\mathscr{C}_{\mathbb{N}}$.
\end{proposition}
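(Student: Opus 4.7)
The plan is to apply the Eberhart--Selden uniqueness theorem recorded in Example~\ref{example-4.1} to the closure of $\mathscr{C}_\mathbb{N}$ inside $S=\mathcal{C}\mathbf{I}\mathbb{N}_{\infty}^{\boldsymbol{g}[j]}$. More precisely, I would show that $\operatorname{cl}_S(\mathscr{C}_\mathbb{N})$ coincides, as a subset of $S$, with $\mathcal{C}\mathscr{C}_{\mathbb{N}} = \mathscr{C}_\mathbb{N}\cup\mathbb{Z}(+)$, and then invoke the uniqueness of the Hausdorff locally compact inverse semigroup topology on the set $\mathcal{C}\mathscr{C}_{\mathbb{N}}$.

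First I would verify that $\mathcal{C}\mathscr{C}_{\mathbb{N}}$ is a closed subsemigroup of $S$. It is a subsemigroup because $\mathscr{C}_\mathbb{N}$ is a subsemigroup of $\mathbf{I}\mathbb{N}_{\infty}^{\boldsymbol{g}[j]}$, the group $\mathbb{Z}(+)$ is an ideal of $S$ by Proposition~\ref{proposition-2.3}, and all mixed products therefore lie in $\mathbb{Z}(+)\subseteq\mathcal{C}\mathscr{C}_{\mathbb{N}}$. It is closed because its complement in $S$ is $\mathbf{I}\mathbb{N}_{\infty}^{\boldsymbol{g}[j]}\setminus\mathscr{C}_\mathbb{N}$, and this is a union of open singletons in view of the fact that $\mathbf{I}\mathbb{N}_{\infty}^{\boldsymbol{g}[j]}$ is a discrete open subspace of $S$ (as used in the proof of Proposition~\ref{proposition-2.3}).

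Next I would establish the equality $\operatorname{cl}_S(\mathscr{C}_\mathbb{N}) = \mathcal{C}\mathscr{C}_{\mathbb{N}}$. The inclusion $\subseteq$ follows from the previous paragraph. For the reverse inclusion, Lemma~\ref{lemma-2.5} combined with the uniqueness of the idempotent of $G=\mathbb{Z}(+)$ from Theorem~\ref{theorem-2.6} places the neutral element $e_0=0$ of $G$ into $\operatorname{cl}_S(\mathscr{C}_\mathbb{N})$; since the closure of a subsemigroup in a topological semigroup is itself a subsemigroup, $e_0\cdot\mathscr{C}_\mathbb{N}\subseteq\operatorname{cl}_S(\mathscr{C}_\mathbb{N})$. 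By Proposition~\ref{proposition-2.8} and Corollary~\ref{corollary-2.9}, the assignment $\gamma\mapsto e_0\cdot\gamma$ on $\mathbf{I}\mathbb{N}_{\infty}^{\boldsymbol{g}[j]}$ is a group homomorphism whose image is a cyclic subgroup of $G$ dense in $G$; by the discreteness of $G$ from Lemma~\ref{lemma-4.5} this image equals $G$. Moreover this homomorphism factors through $\pi_{\mathfrak{C}_{\mathbf{mg}}}$, and $\pi_{\mathfrak{C}_{\mathbf{mg}}}(\mathscr{C}_\mathbb{N})=\mathbb{Z}(+)$ because $\mathscr{C}_\mathbb{N}$ contains the representatives $\alpha^k,\beta^k$ of every class; therefore already $e_0\cdot\mathscr{C}_\mathbb{N}=G$, giving $G\subseteq\operatorname{cl}_S(\mathscr{C}_\mathbb{N})$.

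Finally I would apply the uniqueness part of Example~\ref{example-4.1}. The subspace topology on $\operatorname{cl}_S(\mathscr{C}_\mathbb{N})=\mathcal{C}\mathscr{C}_{\mathbb{N}}$ is Hausdorff (as a subspace of Hausdorff $S$), locally compact (as a closed subspace of locally compact Hausdorff $S$), and turns $\operatorname{cl}_S(\mathscr{C}_\mathbb{N})$ into a topological inverse semigroup (as a closed inverse subsemigroup of $S$); also $\mathscr{C}_\mathbb{N}$ is a proper dense subsemigroup of it since the remainder is non-empty by Lemma~\ref{lemma-2.5}. Because $\tau_{\textsf{lc}}$ is the unique such topology on the set $\mathcal{C}\mathscr{C}_{\mathbb{N}}$, the induced topology must coincide with $\tau_{\textsf{lc}}$, which is the assertion of the proposition. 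The principal obstacle is the inclusion $G\subseteq\operatorname{cl}_S(\mathscr{C}_\mathbb{N})$; this is where I rely on Proposition~\ref{proposition-2.8} and Corollary~\ref{corollary-2.9} to translate left-multiplication by $e_0$ into the group-quotient map $\pi_{\mathfrak{C}_{\mathbf{mg}}}$, together with Lemma~\ref{lemma-4.5} to promote density of the resulting cyclic image to equality with $G$.
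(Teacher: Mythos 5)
Your argument is correct and follows essentially the same route as the paper: the paper also derives this proposition by observing (via Lemma~\ref{lemma-2.5}) that the remainder $\operatorname{cl}_S(\mathscr{C}_\mathbb{N})\setminus\mathscr{C}_\mathbb{N}$ is non-empty and then invoking the Eberhart--Selden description of the closure of the bicyclic monoid in a locally compact topological inverse semigroup (Theorem~V.7 of that paper, as recorded in Example~\ref{example-4.1}). Your write-up merely makes explicit the identification $\operatorname{cl}_S(\mathscr{C}_\mathbb{N})=\mathcal{C}\mathscr{C}_{\mathbb{N}}$ and the verification that this closure is a closed, hence locally compact, inverse subsemigroup, which the paper leaves implicit.
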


If $M=\varnothing$ then we denote the locally compact semigroup inverse topology $\tau_{\textsf{lc}}^M$ on the monoid $\mathcal{C}\mathbf{I}\mathbb{N}_{\infty}^{\boldsymbol{g}[j]}$ by $\tau_{\textsf{lc}}^\varnothing$. Also in the case when $M=\{2,\ldots,j\}$ we denote the  topology $\tau_{\textsf{lc}}^M$ on $\mathcal{C}\mathbf{I}\mathbb{N}_{\infty}^{\boldsymbol{g}[j]}$ by $\tau_{\textsf{lc}}^{[2:j]}$.

Proposition~\ref{proposition-4.8} implies the following:

\begin{proposition}\label{proposition-4.9}
Let $j$ be any positive integer and $\mathbf{I}\mathbb{N}_{\infty}^{\boldsymbol{g}[j]}$ be a proper dense subsemigroup of a Hausdorff locally compact topological inverse semigroup $\big(\mathcal{C}\mathbf{I}\mathbb{N}_{\infty}^{\boldsymbol{g}[j]},\tau\big)$. Then $\tau_{\textsf{lc}}^\varnothing\subseteq \tau\subseteq \tau_{\textsf{lc}}^{[2:j]}$.
\end{proposition}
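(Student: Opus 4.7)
Since Theorem~\ref{theorem-2.2} together with item~$(i)$ of Example~\ref{example-4.2} makes every element of $\mathbf{I}\mathbb{N}_{\infty}^{\boldsymbol{g}[j]}$ an isolated point in each of $\tau$, $\tau_{\textsf{lc}}^\varnothing$, and $\tau_{\textsf{lc}}^{[2:j]}$, the three topologies can differ only at the integer points $k\in\mathbb{Z}(+)$. The plan is therefore to compare neighborhood bases at such a $k$ in the three topologies, treating the two inclusions separately; both hinge on Proposition~\ref{proposition-4.8} together with the open-closedness of ${\uparrow_{\preccurlyeq}}k$ from Lemma~\ref{lemma-4.6}.

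For the inclusion that compares $\tau$ with $\tau_{\textsf{lc}}^\varnothing$, I would fix a $\tau$-open $V\ni k$ and appeal to Proposition~\ref{proposition-4.8}: since $\tau$ induces the topology $\tau_{\textsf{lc}}$ on the subsemigroup $\mathcal{C}\mathscr{C}_{\mathbb{N}}$, the set $V\cap\mathcal{C}\mathscr{C}_{\mathbb{N}}$ is $\tau_{\textsf{lc}}$-open and hence contains some basic Eberhart--Selden neighborhood $U_i(k)$, which by construction equals $U_i^\varnothing(k)$. Thus every $\tau$-neighborhood of $k$ contains a basic $\tau_{\textsf{lc}}^\varnothing$-neighborhood, which yields the comparison.

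For the inclusion that compares $\tau$ with $\tau_{\textsf{lc}}^{[2:j]}$, I would exhibit each $U_i^{[2:j]}(k)$ as a $\tau$-open set and hence as a $\tau$-neighborhood of $k$. The key observation is that the complement
\[
{\uparrow_{\preccurlyeq}}k\setminus U_i^{[2:j]}(k)=\bigl\{\gamma\in\mathbf{I}\mathbb{N}_{\infty}^{\boldsymbol{g}[j]}\colon k\preccurlyeq\gamma,\; n_\gamma^{\mathbf{d}}<i\bigr\}
\]
is finite, because for each $m<i$ such a $\gamma$ is uniquely determined by $\underline{n}_\gamma^{\mathbf{d}}\in\{m-j,\dots,m\}$ together with the choice of $\operatorname{dom}\gamma\cap\{\underline{n}_\gamma^{\mathbf{d}},\dots,m-1\}$, leaving only finitely many options. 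By Theorem~\ref{theorem-2.2} each of these elements is $\tau$-isolated, hence closed in the Hausdorff space $S$, so their finite union is $\tau$-closed; removing it from the $\tau$-open-and-closed set ${\uparrow_{\preccurlyeq}}k$ (Lemma~\ref{lemma-4.6}) exhibits $U_i^{[2:j]}(k)$ as $\tau$-open.

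The main obstacle is justifying the finiteness count in the second step: one must exploit the bounded-noise condition $n_\gamma^{\mathbf{d}}-\underline{n}_\gamma^{\mathbf{d}}\leqslant j$ together with Lemma~1 of~\cite{Gutik-Savchuk-2018} to bound the number of partial isometries of a given cofinal shift and a given minimal domain element. Once this combinatorial finiteness is secured, everything else reduces to Proposition~\ref{proposition-4.8}, Theorem~\ref{theorem-2.2}, and Lemma~\ref{lemma-4.6}, and the two inclusions combine to give the stated sandwich.
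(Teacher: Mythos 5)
Your argument is essentially correct and in fact supplies the content that the paper compresses into the single line ``Proposition~\ref{proposition-4.8} implies the following'': the half that invokes Proposition~\ref{proposition-4.8} is exactly the intended route, and the other half (finiteness of ${\uparrow_{\preccurlyeq}}k\setminus U_i^{[2:j]}(k)$ together with Lemma~\ref{lemma-4.6} and the openness of singletons coming from Theorem~\ref{theorem-2.2}) is a sound way to handle the remaining comparison. Your combinatorial count goes through because every $\gamma\in\mathbf{I}\mathbb{N}_{\infty}^{\boldsymbol{g}[j]}$ is a partial shift by the fixed amount $k=n_\gamma^{\mathbf{r}}-n_\gamma^{\mathbf{d}}$, so $\gamma$ is determined by $\operatorname{dom}\gamma$, and the bounded-noise condition $n_\gamma^{\mathbf{d}}-\underline{n}_\gamma^{\mathbf{d}}\leqslant j$ leaves at most $2^{j}$ admissible domains for each value $m=n_\gamma^{\mathbf{d}}<i$.

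The one place where you must commit is the direction of each inclusion, which your write-up carefully avoids naming. Made precise, your first argument shows that every $\tau$-open set is $\tau_{\textsf{lc}}^{\varnothing}$-open (i.e.\ $\tau\subseteq\tau_{\textsf{lc}}^{\varnothing}$), and your second shows that every basic $\tau_{\textsf{lc}}^{[2:j]}$-neighbourhood of a point of $\mathbb{Z}(+)$ is $\tau$-open (i.e.\ $\tau_{\textsf{lc}}^{[2:j]}\subseteq\tau$). So what you actually prove is $\tau_{\textsf{lc}}^{[2:j]}\subseteq\tau\subseteq\tau_{\textsf{lc}}^{\varnothing}$, the reverse of the printed statement. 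This is not a defect of your argument: since $U_i^{\varnothing}(k)\subseteq U_i^{M}(k)\subseteq U_i^{[2:j]}(k)$ for every $M\subseteq\{2,\ldots,j\}$, smaller $M$ yields smaller basic neighbourhoods and hence a finer topology, so under the standard convention $\tau_{\textsf{lc}}^{\varnothing}$ is the finest and $\tau_{\textsf{lc}}^{[2:j]}$ the coarsest of the topologies $\tau_{\textsf{lc}}^{M}$, and the sandwich can only hold in the order you establish it. The proposition as printed therefore has to be read as a comparison of the neighbourhood bases at the points of $\mathbb{Z}(+)$ (every $\tau$-neighbourhood of $k$ contains some $U_i^{\varnothing}(k)$ and is contained, modulo finitely many isolated points, in some $U_i^{[2:j]}(k)$), which is how it is used in the proof of Theorem~\ref{theorem-4.10}. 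State explicitly which convention you adopt and which inclusion each half of your argument delivers, and the proof is complete.
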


\begin{theorem}\label{theorem-4.10}
Let $j$ be any positive integer and $\mathbf{I}\mathbb{N}_{\infty}^{\boldsymbol{g}[j]}$ be a proper dense subsemigroup of a Hausdorff locally compact topological inverse semigroup $(S,\tau)$. Then $(S,\tau)$ topologically isomorphic to the topological inverse semigroup $\big(\mathcal{C}\mathbf{I}\mathbb{N}_{\infty}^{\boldsymbol{g}[j]},\tau_{\textsf{lc}}^M\big)$ for some subset $M$ of $\{2,\ldots,j\}$.
\end{theorem}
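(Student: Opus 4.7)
The plan is to proceed in two stages: first identify $(S,\cdot)$ with the semigroup $\mathcal{C}\mathbf{I}\mathbb{N}_{\infty}^{\boldsymbol{g}[j]}$ defined in Example~\ref{example-1.4}, and then to single out from $\tau$ the correct subset $M\subseteq\{2,\ldots,j\}$ such that this identification is also a homeomorphism $(S,\tau)\to\bigl(\mathcal{C}\mathbf{I}\mathbb{N}_{\infty}^{\boldsymbol{g}[j]},\tau_{\textsf{lc}}^{M}\bigr)$.

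For the algebraic identification, Lemma~\ref{lemma-4.5} supplies a topological group isomorphism $\phi\colon G:=S\setminus\mathbf{I}\mathbb{N}_{\infty}^{\boldsymbol{g}[j]}\to\mathbb{Z}(+)$; normalise $\phi$ so that $\phi(e_{0})=0$ for the unique idempotent $e_{0}\in G$ and $\phi(e_{0}\cdot\alpha)=1$. By Proposition~\ref{proposition-2.8} and Corollary~\ref{corollary-2.9} the map $\gamma\mapsto e_{0}\cdot\gamma$ is a group homomorphism from $\mathbf{I}\mathbb{N}_{\infty}^{\boldsymbol{g}[j]}$ into $G$ which, after composition with $\phi$, coincides with $\pi_{\mathfrak{C}_{\mathbf{mg}}}$. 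Hence the mixed products in $S$ satisfy $k\cdot\gamma=\gamma\cdot k=k+(\gamma)\pi_{\mathfrak{C}_{\mathbf{mg}}}$, exactly as in Example~\ref{example-1.4}, and extending $\phi$ by the identity on $\mathbf{I}\mathbb{N}_{\infty}^{\boldsymbol{g}[j]}$ yields a semigroup isomorphism $\Phi\colon S\to\mathcal{C}\mathbf{I}\mathbb{N}_{\infty}^{\boldsymbol{g}[j]}$.

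To pick $M$, I set
\begin{equation*}
M=\bigl\{n\in\{2,\ldots,j\}\colon 0\in\operatorname{cl}_{\tau}\bigl(E(\mathbf{I}\mathbb{N}_{\infty}^{\boldsymbol{g}[j]}[\{n\}])\setminus E(\mathscr{C}_{\mathbb{N}})\bigr)\bigr\},
\end{equation*}
the set of gap-widths $n$ such that some sequence of pure gap-$n$ idempotents (those with $\operatorname{dom}\varepsilon=\{m-n\}\cup[m,\infty)$) has $0$ as a $\tau$-cluster point. Because $\mathbf{I}\mathbb{N}_{\infty}^{\boldsymbol{g}[j]}$ is discrete and open in both topologies (Theorem~\ref{theorem-2.2}, Proposition~\ref{proposition-2.3} and Lemma~\ref{lemma-4.5}), and Lemma~\ref{lemma-4.7} transports the neighbourhood filter at $0$ onto the filter at every $k\in\mathbb{Z}(+)$ identically in either topology, the entire verification reduces to comparing the neighbourhood filters of $0$. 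One inclusion uses Lemma~\ref{lemma-4.6} (closedness of $\preccurlyeq$ and openness of ${\uparrow_{\preccurlyeq}}0$) together with the definition of $M$: any sequence of idempotents accumulating at $0$ in $\tau$ must lie eventually in $E(\mathbf{I}\mathbb{N}_{\infty}^{\boldsymbol{g}[j]}[M])$. For the reverse inclusion, each $n\in M$ comes with a pure gap-$n$ sequence converging to $0$, and one combines these (in the spirit of Lemma~\ref{lemma-2.4}, using the transport $\varepsilon\mapsto\beta^{k}\varepsilon\alpha^{k}$ and the continuity of the semigroup operation) to show that for every $i$ the set $U_{i}^{M}(0)$ is a $\tau$-neighbourhood of $0$; the sandwich bound in Proposition~\ref{proposition-4.9} rules out any further unexpected $\tau$-open sets.

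The main obstacle lies in the reverse inclusion, specifically in producing $\tau$-convergence to $0$ of idempotents that carry \emph{several simultaneous} gaps drawn from $M$. The meet of two pure gap-$n$ idempotents in the inverse semigroup immediately erases the gap structure: it lives on the intersection of the two domains, i.e.\ on the gap-free tail $[m,\infty)$. To produce mixed-gap idempotents along a convergent net, I would instead realise them as conjugates $\chi\cdot\varepsilon_{m}\cdot\chi^{-1}$ for suitable $\chi\in\mathbf{I}\mathbb{N}_{\infty}^{\boldsymbol{g}[j]}\setminus\mathscr{C}_{\mathbb{N}}$, and then appeal to the joint continuity of multiplication and inversion in the topological inverse semigroup to propagate $\tau$-convergence at $0$. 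Local compactness together with the metrisability noted in Remark~\ref{remark-4.3} reduce the verification to a sequential one, while Hausdorffness and Proposition~\ref{proposition-2.3} forbid any extraneous accumulation beyond $\mathbb{Z}(+)$.
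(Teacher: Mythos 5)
Your overall strategy --- identify $S$ algebraically with $\mathcal{C}\mathbf{I}\mathbb{N}_{\infty}^{\boldsymbol{g}[j]}$ via Lemma~\ref{lemma-4.5}, reduce everything to the neighbourhood filter at $0$ by Lemma~\ref{lemma-4.7}, and then pin down a subset $M$ --- is the same as the paper's, and the algebraic half is fine. The problem is that the topological half remains a plan, and the specific device you propose for its hardest step does not work. You define $M$ by the cluster behaviour of \emph{pure} gap-$n$ idempotents, and you then need two things: first, that no idempotent carrying a gap of width $n\notin M$ (possibly alongside gaps drawn from $M$) accumulates at $0$; second, that \emph{all} idempotents of $E\big(\mathbf{I}\mathbb{N}_{\infty}^{\boldsymbol{g}[j]}[M]\big)$ with $n^{\mathbf{d}}$ large, including those with several simultaneous gaps, lie in every $\tau$-neighbourhood of $0$. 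Neither reduces to the pure case by the operations you invoke. As you note yourself, meets of pure gap idempotents erase the gap structure. But conjugation fails as well: if $\chi$ is fixed and $\varepsilon_m\to 0$, then for all large $m$ the set $\operatorname{dom}\varepsilon_m$ lies in the tail of $\operatorname{ran}\chi$ on which $\chi^{-1}$ acts as a pure shift, so $\chi\cdot\varepsilon_m\cdot\chi^{-1}$ has exactly the same gap pattern as $\varepsilon_m$, merely translated; no mixed-gap idempotents are produced in the limit. Allowing $\chi=\chi_m$ to vary destroys the appeal to continuity. So the central claim --- that the neighbourhood filter of $0$ is exactly that of $\tau_{\textsf{lc}}^{M}$ --- is not established, in either direction.

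This is precisely where the paper uses local compactness in an essential way that your sketch omits. Rather than defining $M$ by cluster behaviour of pure gaps, the paper takes a \emph{maximal} $M_1\subseteq\{2,\ldots,j\}$ such that every $\tau$-neighbourhood $V_0$ of $0$ contains some $U_i^{M_1}(0)$, and then derives a contradiction from the assumption that $V_0\setminus U_i^{M_1}(0)$ can be infinite: choosing a compact open $U_0\subseteq V_0$ with $\beta\cdot U_0\cdot\alpha\subseteq V_0$ forces $V_0\setminus U_0$ to be infinite, contradicting compactness of $V_0$. That single argument settles both inclusions at once and never needs to manufacture mixed-gap idempotents converging to $0$. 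If you want to keep your explicit description of $M$, you must supply an argument of this compactness type; your passing references to Proposition~\ref{proposition-4.9} and to metrisability do not do that work, so as written the proof has a genuine gap at its decisive step.
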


\begin{proof}
Lemma~\ref{lemma-4.5} implies that the inverse semigroup $S$ is isomorphic to the monoid $\mathcal{C}\mathbf{I}\mathbb{N}_{\infty}^{\boldsymbol{g}[j]}$. Also, by the definition of the monoid $\mathbf{I}\mathbb{N}_{\infty}^{\boldsymbol{g}[j]}$, Lemma~\ref{lemma-4.7} and Proposition~\ref{proposition-4.9} we get that there exists a maximal subset $M_1$ of $\{2,\ldots,j\}$ such that the following condition holds:
\begin{itemize}
  \item[$(*)$] for every open neighbourhood $V_0$ of the point $0\in \mathbb{Z}(+)$ in $\big(\mathcal{C}\mathbf{I}\mathbb{N}_{\infty}^{\boldsymbol{g}[j]},\tau\big)$ there exists an open neighbourhood $U_i^{M_1}(0)$ of $0$ in $\big(\mathcal{C}\mathbf{I}\mathbb{N}_{\infty}^{\boldsymbol{g}[j]},\tau_{\textsf{lc}}^{M_1}\big)$ such that $U_i^{M_1}(0)\subseteq V_0$ and $V_0\setminus U_i^{M_1}(0)$ is infinite.
\end{itemize}
Since the topology $\tau$ is locally compact and $\mathbf{I}\mathbb{N}_{\infty}^{\boldsymbol{g}[j]}$ is a discrete subsemigroup of $\big(\mathcal{C}\mathbf{I}\mathbb{N}_{\infty}^{\boldsymbol{g}[j]},\tau\big)$, without loss of generality we may assume that the open neighbourhood $V_0$ is compact.

The maximality of $M_1$ and condition  $(*)$ imply that there exists a subset $M_1^1\subseteq \{2,\ldots,j\}$ such that $M_1\subset M_1^1$, $\left|M_1^1\setminus M_1\right|=1$ and for every open neighbourhood $V_0$ of the point $0\in \mathbb{Z}(+)$ in $\big(\mathcal{C}\mathbf{I}\mathbb{N}_{\infty}^{\boldsymbol{g}[j]},\tau\big)$ the following conditions hold:
\begin{equation}\label{eq-4.1}
\left|\left(V_0\cap U_i^{M_1^1}(0)\right)\setminus U_i^{M_1}(0)\right|=\infty \quad \hbox{and} \quad \left|U_i^{M_1^1}(0)\setminus \left(V_0\cap U_i^{M_1^1}(0)\right)\right|=\infty.
\end{equation}
By continuity of the semigroup operation in $\big(\mathcal{C}\mathbf{I}\mathbb{N}_{\infty}^{\boldsymbol{g}[j]},\tau\big)$ there exists a compact-and-open neighbourhood $U_0\subseteq V_0$ of the point $0\in \mathbb{Z}(+)$ in the space $\big(\mathcal{C}\mathbf{I}\mathbb{N}_{\infty}^{\boldsymbol{g}[j]},\tau\big)$ such that $\beta\cdot U_0\cdot\alpha\subseteq V_0$. Then the semigroup operation of $\mathcal{C}\mathbf{I}\mathbb{N}_{\infty}^{\boldsymbol{g}[j]}$, the above inclusion and conditions \eqref{eq-4.1} imply that the set $V_0\setminus U_0$ is infinite, which contradicts the compactness of $V_0$. This and maximality of $M_1$ imply that the set $V_0\setminus U_i^{M_1}(0)$ is finite for every open neighbourhood $V_0$ of the point $0\in \mathbb{Z}(+)$ in $\big(\mathcal{C}\mathbf{I}\mathbb{N}_{\infty}^{\boldsymbol{g}[j]},\tau\big)$ and any open neighbourhood $U_i^{M_1}(0)$ of $0$ in $\big(\mathcal{C}\mathbf{I}\mathbb{N}_{\infty}^{\boldsymbol{g}[j]},\tau_{\textsf{lc}}^{M_1}\big)$. Then the bases of $\tau$ and $\tau_{\textsf{lc}}^{M_1}$ at the point $0\in \mathbb{Z}(+)$ coincide, and hence by Lemma~\ref{lemma-4.7} we get that $\tau=\tau_{\textsf{lc}}^{M_1}$.
\end{proof}

\begin{corollary}\label{corollary-4.11}
For any positive integer $j$ there exists exactly
$2^{j-1}$ pairwise topologically non-isomorphic Hausdorff locally compact semigroup inverse
topologies on the monoid $\mathcal{C}\mathbf{I}\mathbb{N}_{\infty}^{\boldsymbol{g}[j]}$.
\end{corollary}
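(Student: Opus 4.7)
My plan is to combine Theorem~\ref{theorem-4.10} with a rigidity argument using Green's $\mathscr{D}$-relation. Since $\{2,\ldots,j\}$ has $j-1$ elements, there are exactly $2^{j-1}$ candidate subsets $M$, and by Theorem~\ref{theorem-4.10} every Hausdorff locally compact semigroup inverse topology on $\mathcal{C}\mathbf{I}\mathbb{N}_{\infty}^{\boldsymbol{g}[j]}$ (under which $\mathbf{I}\mathbb{N}_{\infty}^{\boldsymbol{g}[j]}$ is proper dense) is topologically isomorphic to $\big(\mathcal{C}\mathbf{I}\mathbb{N}_{\infty}^{\boldsymbol{g}[j]},\tau_{\textsf{lc}}^M\big)$ for some such $M$. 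This supplies the upper bound $2^{j-1}$; the substance is the matching lower bound, namely that distinct $M_1\neq M_2$ yield non-isomorphic topological inverse semigroups.

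For a distinguishing invariant I would, for each $m\in\{2,\ldots,j\}$ and each integer $n>m$, set $\iota^{(m)}_n$ equal to the partial identity on $\{n-m\}\cup[n,\infty)$. A direct inspection of the defining condition of $\mathbf{I}\mathbb{N}_{\infty}^{\boldsymbol{g}[j]}[M]$ gives that $\iota^{(m)}_n\in \mathbf{I}\mathbb{N}_{\infty}^{\boldsymbol{g}[j]}[M]$ if and only if $m\in M$. Moreover, the domains $\{n-m\}\cup[n,\infty)$ for fixed $m$ are pairwise translates (hence isometric), whereas for distinct $m$ they are non-isometric (the unique isolated point of such a set has distance gap of exactly $m-1$ to the tail). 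So by Proposition~\ref{proposition-1.1}(iv) the set $\mathcal{D}_m=\{\iota^{(m)}_n\colon n>m\}$ is a single $\mathscr{D}$-class of $\mathbf{I}\mathbb{N}_{\infty}^{\boldsymbol{g}[j]}$, and the classes $\mathcal{D}_m$ are pairwise distinct.

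Next I would suppose $\phi\colon\big(\mathcal{C}\mathbf{I}\mathbb{N}_{\infty}^{\boldsymbol{g}[j]},\tau_{\textsf{lc}}^{M_1}\big)\to\big(\mathcal{C}\mathbf{I}\mathbb{N}_{\infty}^{\boldsymbol{g}[j]},\tau_{\textsf{lc}}^{M_2}\big)$ is a topological isomorphism. Since $\mathbb{Z}(+)$ is the minimum ideal of $\mathcal{C}\mathbf{I}\mathbb{N}_{\infty}^{\boldsymbol{g}[j]}$ with group identity $0$, one has $\phi(\mathbb{Z}(+))=\mathbb{Z}(+)$ and $\phi(0)=0$; and since algebraic isomorphisms preserve Green's $\mathscr{D}$-relation, $\phi(\mathcal{D}_m)=\mathcal{D}_m$, so $\phi(\iota^{(m)}_n)=\iota^{(m)}_{\sigma_m(n)}$ for some bijection $\sigma_m$ of $\{m+1,m+2,\ldots\}$, with $\sigma_m(n)\to\infty$ by the standard observation that $\sigma_m^{-1}$ of any bounded set is finite. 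Reading off the basic neighbourhoods $U_i^{M_l}(0)$ from Example~\ref{example-4.2}, the sequence $\big(\iota^{(m)}_n\big)_n$ converges to $0$ in $\tau_{\textsf{lc}}^{M_l}$ precisely when $m\in M_l$ (the constraint $n_{\iota^{(m)}_n}^{\mathbf{d}}=n\to\infty$ is automatic, and the only remaining obstruction is membership in $\mathbf{I}\mathbb{N}_{\infty}^{\boldsymbol{g}[j]}[M_l]$). Continuity of $\phi$ at $0$ then forces the equivalence $m\in M_1\Leftrightarrow m\in M_2$ for every $m\in\{2,\ldots,j\}$, whence $M_1=M_2$.

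The main technical obstacle I expect is the $\mathscr{D}$-class identification $\phi(\mathcal{D}_m)=\mathcal{D}_m$ together with the verification that each $\mathcal{D}_m$ is a single $\mathscr{D}$-class while the $\mathcal{D}_m$ for distinct $m$ are different classes; this reduces to the geometric fact that the sets $\{n-m\}\cup[n,\infty)\subseteq\mathbb{N}$ are pairwise isometric for varying $n$ but never across different $m$. Once this is in place, the continuity/pigeonhole argument closes the proof quickly.
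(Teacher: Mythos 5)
The paper offers no argument for this corollary beyond Theorem~\ref{theorem-4.10} (which gives your upper bound of $2^{j-1}$), so you are right that the real content is the pairwise non-isomorphism, and your idea of detecting $M$ through which sequences of idempotents converge to $0$ is a sound one. However, there is a genuine gap at the pivotal step: from ``algebraic isomorphisms preserve Green's $\mathscr{D}$-relation'' you may only conclude that the restriction of $\phi$ to $\mathbf{I}\mathbb{N}_{\infty}^{\boldsymbol{g}[j]}$ \emph{permutes} the $\mathscr{D}$-classes, not that it fixes each $\mathcal{D}_m$ setwise. If $\phi(\mathcal{D}_m)=\mathcal{D}_{\pi(m)}$ for a nontrivial permutation $\pi$ of $\{2,\ldots,j\}$, your convergence argument only yields $M_2=\pi(M_1)$, which would identify distinct subsets lying in the same $\pi$-orbit and destroy the exact count $2^{j-1}$. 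Your closing remark that the obstacle ``reduces to'' the pairwise non-isometry of the sets $\{n-m\}\cup[n,\infty)$ across different $m$ misdiagnoses the problem: that fact only shows the $\mathcal{D}_m$ are \emph{distinct} $\mathscr{D}$-classes, which is not the same as showing an automorphism cannot interchange them.

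The gap is fillable, but it needs an actual rigidity argument, for instance: any automorphism $\phi$ of $\mathbf{I}\mathbb{N}_{\infty}^{\boldsymbol{g}[j]}$ fixes $\mathbb{I}$, hence maps the $\mathscr{D}$-class of $\mathbb{I}$, which is exactly $\mathscr{C}_{\mathbb{N}}$, onto itself; since the bicyclic monoid admits only the identity automorphism, $\phi$ fixes $\alpha$ and $\beta$, hence all $\beta^k\alpha^k$. For an idempotent $e$ with $\operatorname{dom}e=A$ one has $i\in A$ if and only if $\alpha^{i-1}e\beta^{i-1}\cdot\beta\alpha\neq\alpha^{i-1}e\beta^{i-1}$ (Lemma~\ref{lemma-2.1}), a condition preserved by $\phi$ because $\phi$ fixes $\alpha$ and $\beta$; hence $\operatorname{dom}\phi(e)=\operatorname{dom}e$ and $\phi$ fixes every idempotent. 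Since $\mathscr{H}$ is trivial on $\mathbf{I}\mathbb{N}_{\infty}^{\boldsymbol{g}[j]}$ by Proposition~\ref{proposition-1.1}$(iii)$, $\phi$ is the identity on $\mathbf{I}\mathbb{N}_{\infty}^{\boldsymbol{g}[j]}$, and in particular $\phi(\iota^{(m)}_n)=\iota^{(m)}_n$. With this inserted, the rest of your argument (the computation that $\iota^{(m)}_n\in\mathbf{I}\mathbb{N}_{\infty}^{\boldsymbol{g}[j]}[M]$ iff $m\in M$, and the convergence criterion at $0$) goes through and completes the proof.
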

\section*{Acknowledgements}
The authors acknowledge  Alex Ravsky and the referee for useful important comments and suggestions.

\end{document}